\theoremstyle{plain}
\newtheorem{thm}{Theorem}[section]
\newtheorem*{thm*}{Theorem}
\newtheorem{cor}[thm]{Corollary}
\newtheorem*{cor*}{Corollary}
\newtheorem{prop}[thm]{Proposition}
\newtheorem*{prop*}{Proposition}
\newtheorem{lem}[thm]{Lemma}
\newtheorem*{lem*}{Lemma}
\newtheorem{claim}{Claim}[thm]
\newtheorem*{claim*}{Claim}
\newtheorem*{exer*}{Exercise}
\newtheorem*{conj*}{Conjecture}
\theoremstyle{definition}
\newtheorem{defn}[thm]{Definition}
\newtheorem*{defn*}{Definition}
\newtheorem*{ex*}{Example}
\newtheorem*{q*}{Question}
\theoremstyle{remark}
\newtheorem{rem}[thm]{Remark}
\newtheorem*{rem*}{Remark}
\theoremstyle{plain}
\Crefname{thm}{Theorem}{Theorems}
\Crefname{defn}{Definition}{Definitions}
\Crefname{lem}{Lemma}{Lemmata}
\newcommand{\gpd}{\mathcal{G}}
\newcommand{\Hgpd}{\mathcal{H}}
\newcommand{\essbdd}{L^{\infty}(\gpd^0)}
\newcommand{\VN}{\mathcal{N}}
\newcommand{\UO}{\mathcal{U}}
\newcommand{\C}{\mathbb{C}}
\newcommand{\N}{\mathbb{N}}
\newcommand{\R}{\mathbb{R}}
\DeclareMathOperator{\coker}{coker}
\DeclareMathOperator{\im}{im}
\DeclareMathOperator{\rk}{rk}
\DeclareMathOperator{\Tor}{Tor}
\DeclareMathOperator{\trace}{tr}
\DeclareMathOperator{\res}{res}
\DeclareMathOperator{\Mat}{Mat}
\DeclareMathOperator{\pr}{pr}
\newcommand{\vertii}[1]{{\left\vert\kern-0.25ex\left\vert #1 \right\vert\kern-0.25ex\right\vert}}
\newcommand{\vertiii}[1]{{\left\vert\kern-0.25ex\left\vert\kern-0.25ex\left\vert #1 \right\vert\kern-0.25ex\right\vert\kern-0.25ex\right\vert}}
\newsavebox{\@brx}
\newcommand{\llangle}[1][]{\savebox{\@brx}{\(\m@th{#1\langle}\)}%
  \mathopen{\copy\@brx\mkern2mu\kern-0.9\wd\@brx\usebox{\@brx}}}
\newcommand{\rrangle}[1][]{\savebox{\@brx}{\(\m@th{#1\rangle}\)}%
  \mathclose{\copy\@brx\mkern2mu\kern-0.9\wd\@brx\usebox{\@brx}}}
\title[On Vanishing Criteria of \texorpdfstring{$L^2$}--Betti Numbers of Groups]{On Vanishing Criteria of \texorpdfstring{$L^2$}--Betti Numbers of Groups}
\author{Pablo S\'anchez-Peralta}
\address[P.~ S\'anchez-Peralta]{Universidad Aut\'onoma de Madrid, Madrid, Spain}
\email{pablo.sanchezperalta@uam.es}
\begin{document}

\maketitle

\begin{abstract}
    Let $G$ be a countable group and $k$ a positive integer, we show that the $L^2$-Betti numbers of the group $G$ vanish up to degree $k$ provided that there is some infinite index subgroup $H$ with finite $k$th $L^2$-Betti number containing a normal subgroup of $G$ whose $L^2$-Betti numbers are all zero below degree $k$. This generalizes previous criteria of both Sauer and Thom, and Peterson and Thom. In addition, we exhibit a purely algebraic proof of a well-known theorem of Gaboriau concerning the first $L^2$-Betti number which was requested by Bourdon, Martin and Valette. Finally, we provide evidence of a positive answer for a question posted by Hillman that wonders whether the above statement holds for $k=1$ and $H$ containing a subnormal subgroup instead.
\end{abstract}

\section{Introduction}

    Given $G$ a countable group, the $L^2$-Betti numbers $\beta_i^{(2)}(G)$ have proved to be a powerful asymptotic invariant. The purpose of this article is to extend some of the vanishing criteria for these invariants. Specifically, we shall see how the $L^2$-Betti numbers of certain subgroups take control over the $L^2$-Betti numbers of the whole group. One instance of such phenomenon is when the subgroup is of finite index. This situation is completely well understood (see \cite[Theorem 4.15]{Kamm_l2invt}) due to the multiplicity of these invariants. However, the interaction with infinite index subgroups is still a challenging issue so far. One of the first approaches to attack this question was given by D. Gaboriau in his groundbreaking article \cite{Gab02}. The strategy implemented there was based on the study of the notion of $L^2$-Betti numbers of measured equivalence relations, which were introduced by Gaboriau in the same paper, to show that whenever the infinite index subgroup is normal, infinite and with finite first $L^2$-Betti number, then $\beta_1^{(2)}(G)$ must be zero \cite[Théorème 6.8]{Gab02}.

    In light of Gaboriau's theorem, it is natural to ask whether the vanishing of higher $L^2$-Betti numbers of a group G can be detected by looking at higher $L^2$-Betti numbers of the normal subgroup $N$. This was done by R. Sauer and A. Thom in \cite{SauerThom}. Concretely, they constructed a spectral sequence for $L^2$-type cohomology groups which shows that Gaboriau's result is the degree $1$ case of the next general statement. Recall that the zeroth $L^2$-Betti number of a group is zero if and only if the group is infinite. The statement reads as follows: Let $G$ be a countable group, $N$ a normal subgroup in $G$ and $k$ a positive integer such that $\beta_i^{(2)}(N)$ vanishes for all $0\leq i\leq k-1$ and $\beta_k^{(2)}(N)$ is finite; then $\beta_k^{(2)}(G)$ vanishes. It is worth mentioning that L\"uck \cite[Theorem 7.2.(6)]{Luck02} proved this criterion whenever the quotient group has a non-torsion element or finite subgroups of arbitrarily high order, and likewise in \cite[Corollary 1]{BMV05} a proof of the former case was exhibited. In \cite{BMV05} and \cite{SauerThom}, the authors claimed that {\it it is a challenging and vaguely irritating question to find a purely cohomological proof of Gaboriau's result} and that {\it there is no proof that does not use measured equivalence relations}, respectively. In this paper we remedy this situation stating a completely algebraic proof pending on the next result. Observe that in order to recover the original statement one can take $M=\C$ with the trivial action (see \cref{subsec: von_Neumann_dim} for the precise definitions).

    \begin{thm}\label{thm: module_beta_k}
        Let $G$ be a countable group, $N$ an infinite index normal subgroup in $G$, $M$ a left $\C[G]$-module with $N$ acting trivially and $k$ a positive integer. Suppose that $\beta_i^{(2)}(N)$ vanishes for all $0\leq i\leq k-1$ and $\dim_{\scalebox{0.8}{$\UO(N)$}}(\Tor_k^{\scalebox{0.8}{$\C[N]$}}(\UO(N),M))$ is finite. Then 
        \[
        \dim_{\scalebox{0.8}{$\UO(G)$}}\left(\Tor_k^{\scalebox{0.8}{$\C[G]$}}(\UO(G),M)\right)=0.
        \]
    \end{thm}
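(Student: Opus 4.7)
My plan is to build a Hochschild--Serre-type spectral sequence for the extension $1\to N\to G\to Q\to 1$, where $Q=G/N$, with $\UO(G)$-coefficients, and to bound dimensions of the $E^2$-terms contributing to $\Tor_k^{\C[G]}(\UO(G),M)$. Concretely, I would take a $\C[G]$-projective resolution $P_\bullet\to M$ (automatically $\C[N]$-projective, since $\C[G]$ is $\C[N]$-free), factor the functor $\UO(G)\otimes_{\C[G]}-$ as $Q$-coinvariants of $\UO(G)\otimes_{\C[N]}-$, and apply the Grothendieck spectral sequence of the composition to obtain, in the category of left $\UO(G)$-modules, a first-quadrant convergent spectral sequence
\[
E^2_{p,q} \;=\; \Tor_p^{\C[Q]}\!\bigl(\C,\,\Tor_q^{\C[N]}(\UO(G),M)\bigr) \;\Longrightarrow\; \Tor_{p+q}^{\C[G]}(\UO(G),M).
\]
Crucially, left multiplication by $\UO(G)$ on itself commutes with the $\C[N]$-balance and with the diagonal $Q$-action, so the sequence lives in $\UO(G)$-modules and its differentials are $\UO(G)$-linear.

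The next step is to estimate dimensions on the antidiagonal $p+q=k$. By Sauer's induction principle — equivalently, by flatness of $\UO(G)$ over $\UO(N)$ together with $\dim_{\UO(G)}(\UO(G)\otimes_{\UO(N)}Y)=\dim_{\UO(N)}Y$ — one has
\[
\dim_{\UO(G)} \Tor_q^{\C[N]}(\UO(G),M) \;=\; \dim_{\UO(N)} \Tor_q^{\C[N]}(\UO(N),M)
\]
for every $q\geq 0$, which by hypothesis is zero for $q<k$ and finite for $q=k$. Moreover, $\Tor_p^{\C[Q]}(\C,-)$ preserves the class of $\dim_{\UO(G)}$-zero modules: computing with a free $\C[Q]$-resolution of $\C$, the result is a subquotient of a direct sum of copies of the input, and $\dim_{\UO(G)}$ is both additive on direct sums and stable under subquotients. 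Hence every $E^2_{p,q}$ with $q<k$ and $p+q=k$ has zero dimension, and the standard subquotient bound for convergent spectral sequences reduces the theorem to showing
\[
\dim_{\UO(G)}\!\bigl(\C\otimes_{\C[Q]}\Tor_k^{\C[N]}(\UO(G),M)\bigr)\;=\;0.
\]

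This final vanishing is the heart of the argument, and it is precisely where the assumption $[G:N]=\infty$ enters. Writing $\Tor_k^{\C[N]}(\UO(G),M)\cong\UO(G)\otimes_{\UO(N)}Y$ with $Y:=\Tor_k^{\C[N]}(\UO(N),M)$ of finite $\UO(N)$-dimension $d$, and fixing a set-theoretic section $s\colon Q\to G$, the diagonal $Q$-action is implemented by multiplication by the elements $s(q)\in\UO(G)$ twisted by their conjugation action on $Y$. The remaining task is to prove that the $Q$-coinvariants of such a finite-$\UO(G)$-dimensional module with an inner-implemented action of the infinite group $Q$ are $\dim_{\UO(G)}$-negligible. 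After reducing to the case where $Y$ is embedded up to dimension in $\UO(N)^{\oplus n}$, this becomes the assertion that $\UO(G)^{\oplus n}$ modulo the relations $\{s(q)\cdot x-x : q\in Q\}$ is a dimension-torsion $\UO(G)$-module whenever $Q$ is infinite. I expect this to be the main technical step: it is the purely algebraic incarnation of the analytic content of Gaboriau's theorem that the paper promises, and it is where the cleverness of the argument must lie.
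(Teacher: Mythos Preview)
Your reduction via the Hochschild--Serre spectral sequence is valid and lands on exactly the lemma the paper isolates: if $S=\UO(N)\ast Q$ and $Y$ is a left $S$-module with $\dim_{\UO(N)}(Y)<\infty$, then $\dim_{\UO(G)}(\UO(G)\otimes_S Y)=0$ (this is the paper's Theorem~\ref{thm: finite_dim_ind_zero_dim}). Indeed, $(\UO(G)\otimes_{\UO(N)} Y)_Q\cong\UO(G)\otimes_S Y$, so your $E^2_{0,k}$ term is precisely this object. The paper reaches the same reduction without a spectral sequence, by comparing the long exact sequences obtained from $0\to L\to(\C[G]\otimes_{\C[N]}\C)^I\to M\to 0$ under $S\otimes_{\C[G]}-$ and $\UO(G)\otimes_{\C[G]}-$, together with Proposition~\ref{prop: control_dim_modules} for the lower-degree terms; your packaging is perhaps more conceptual, theirs more elementary, but the content is the same.

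The gap is that you stop exactly where the work lies, and your proposed further reformulation of that step is not correct. An embedding (up to dimension) of $Y$ into $\UO(N)^{\oplus n}$ as $\UO(N)$-modules has no reason to be $Q$-equivariant, so you cannot pass to $\UO(G)^{\oplus n}$ modulo the relations $\{s(q)x-x\}$; the $S$-module structure on $Y$ is lost in that move. The paper's argument instead presents a finitely generated $S$-module as $Y\cong S^k/L$ and, for each finite subset $T_i$ of a transversal of $N$ in $G$, bounds $\dim_{\UO(N)}(Y)$ from below by $k|T_i|-\dim_{\UO(N)}(L\cap\UO(N)[T_i]^k)$. One then approximates $L\cap\UO(N)[T_i]^k$ by a finitely generated $\UO(N)$-submodule whose generators, after clearing Ore denominators, lie in $\VN(N)[T_i]^k$, and compares the von Neumann dimensions over $N$ and over $G$ via the isometric embedding of Hilbert modules $\ell^2(N)^{k|T_i|}\hookrightarrow\ell^2(G)^k$ given by multiplication by the transversal elements. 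This yields $\dim_{\UO(N)}(K_i)\le|T_i|\cdot\dim_{\UO(G)}(\UO(G)K_i)$, and hence
\[
\dim_{\UO(G)}(\UO(G)\otimes_S Y)\;\le\;\frac{\dim_{\UO(N)}(Y)}{|T_i|}+\frac{1}{|T_i|^2},
\]
which tends to zero as $|T_i|\to\infty$. This Hilbert-space comparison is the ``cleverness'' you anticipated; neither the spectral-sequence machinery nor the long-exact-sequence setup avoids it.
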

    
    In \cite{PetersonThom_Freiheit}, J. Peterson and A. Thom extended Gaboriau's result in a different direction, namely relaxing the normal hypothesis. In particular, they showed that given $G$ a countable group and $H$ an infinite index subgroup in $G$ containing an infinite normal subgroup $N$ in $G$, then the fact that $\beta_1^{(2)}(H)$ is finite is sufficient to ensure that the first $L^2$-Betti number of $G$ vanishes (see \cite[Theorem 5.12]{PetersonThom_Freiheit} for the general statement). This time we cannot carry out the module approach of \cref{thm: module_beta_k} since it builds heavily on the existence of the intermediate ring $\UO(N)*G/N$ lying between $\UO(N)$ and $\UO(G)$. However, we can look carefully at the homological properties of the groupoid ring and translate the essence of the algebraic proof to give a higher degree version of the Peterson and Thom's normal case.

    \begin{thm}\label{thm: groupoids_beta_k}
        Let $G$ be a countable group, $H$ an infinite index subgroup in $G$ containing a normal subgroup $N$ in $G$ and $k$ a positive integer. Suppose that $\beta_i^{(2)}(N)$ vanishes for all $0\leq i\leq  k-1$ and $\beta_k^{(2)}(H)$ is finite. Then $\beta^{(2)}_k(G)$ vanishes.
    \end{thm}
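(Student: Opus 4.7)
My plan is to mimic the algebraic proof of \cref{thm: module_beta_k} with the crossed product $\UO(N)\ast G/N$ replaced by the affiliated operators of a suitable transformation groupoid. Because $H$ is not assumed to be normal, no crossed product sits naturally between $\UO(N)$ and $\UO(G)$; however, the groupoid $\gpd = G\ltimes G/H$, with $G/H$ equipped with counting measure, does furnish an algebra in which the subgroup $N$ and the coset structure of $G/H$ coexist. Following Sauer's framework for discrete measured groupoids, one forms the semifinite von Neumann algebra $\VN(\gpd)$ and its algebra of affiliated operators $\UO(\gpd)$. The groupoid $\gpd$ is Morita equivalent to the isotropy group $H$ at $eH$, so dimensions over $\UO(\gpd)$ translate into $\UO(H)$-dimensions after Morita transfer; meanwhile, since $N\triangleleft G$ and $N\le H$, the subgroup $N$ acts trivially on $G/H$, yielding a canonical inclusion $\C[N]\hookrightarrow\C[\gpd]$ compatible with the canonical inclusion $\C[G]\hookrightarrow\C[\gpd]$ coming from the transformation groupoid.

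With this setup, the proof proceeds via a Grothendieck-type spectral sequence along the tower $\C[N]\to\C[G]\to\C[\gpd]\to\UO(G)$ that computes $\Tor^{\C[G]}_\ast(\UO(G),\C)$. The hypothesis $\beta_i^{(2)}(N)=0$ for $0\le i\le k-1$, combined with the triviality of the $N$-action on the unit space of $\gpd$, should force the low-degree entries of the spectral sequence to have $\UO(G)$-dimension zero. What survives at the bottom row is a single term, and one obtains the bound
\[
\dim_{\UO(G)}\Tor^{\C[G]}_k(\UO(G),\C) \le \dim_{\UO(G)}\bigl(\UO(G)\otimes_{\UO(\gpd)}\Tor^{\C[G]}_k(\UO(\gpd),\C)\bigr).
\]

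The main obstacle will be the final step: using the finiteness of $\beta_k^{(2)}(H)$ and the infiniteness of $[G:H]$ to conclude that the right-hand side above is zero. Via Morita equivalence, the $\UO(\gpd)$-module $\Tor^{\C[G]}_k(\UO(\gpd),\C)$ is controlled by $\beta_k^{(2)}(H)<\infty$ on the corner at $eH$, so it has finite $\UO(H)$-dimension per fibre but is spread over the $[G:H]=\infty$ fibres of $G/H$. The base change to $\UO(G)$, which carries a finite rather than semifinite trace, should then force the resulting dimension to collapse to zero. Formalising this trace comparison -- reconciling the semifinite trace on $\VN(\gpd)$ with the finite trace on $\VN(G)$ in a way that respects the spectral sequence filtration and the dimension-exactness of $\UO(-)$ -- is the groupoid-theoretic heart of the argument and the principal technical difficulty, being the higher-degree analogue of the trace reconciliation carried out by Peterson and Thom in the $k=1$ case.
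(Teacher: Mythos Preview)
Your proposal has a structural gap at the very first step: the tower $\C[N]\to\C[G]\to\C[\gpd]\to\UO(G)$ does not exist. For $\gpd=G\ltimes G/H$ with counting measure, the groupoid ring $\C[\gpd]$ contains the commutative algebra of finitely supported functions on $G/H$, and there is no ring homomorphism from this into $\UO(G)$; nor is $\UO(G)$ naturally a right $\UO(\gpd)$-module. What exists is a map in the \emph{other} direction, $\VN(G)\to\VN(\gpd)$, and your $\VN(\gpd)$ is semifinite rather than finite, so even the dimension theory you invoke is not directly available. Consequently the displayed inequality you aim for has no meaning as written, and the ``trace reconciliation'' you flag as the main difficulty is not a difficulty to overcome but a symptom that the setup is wrong.

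The paper's route is genuinely different and avoids this obstruction. It takes $\gpd=X_{G,H}\rtimes G$ for the Bernoulli space $X_{G,H}=\prod_{G/H}[0,1]$, so that $\VN(\gpd)$ is a \emph{finite} von Neumann algebra and $\beta_k^{(2)}(G)=\beta_k^{(2)}(\gpd)$. The decisive idea you are missing is that inside this ergodic groupoid one can, for every $n$, build a subgroupoid $\gpd_n$ of index exactly $n$ containing $\Hgpd=X_{G,H}\rtimes H$, by partitioning the $H$-coordinate of the Bernoulli cube into $n$ pieces. Multiplicativity then gives $\beta_k^{(2)}(\gpd_n)=n\cdot\beta_k^{(2)}(\gpd)$, and one proves $\beta_k^{(2)}(H)\geq\beta_k^{(2)}(\gpd_n)$ for every $n$ by a module-theoretic argument (the groupoid analogue of \cref{prop: control_dim_modules}) using that $N$ acts trivially on $\C[\gpd_n]\otimes_{\C[\Hgpd]}L^\infty(\gpd^0)$. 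Letting $n\to\infty$ forces $\beta_k^{(2)}(G)=0$. So rather than a single infinite-index passage and a semifinite-to-finite trace comparison, the argument manufactures arbitrarily large \emph{finite} indices inside a finite-trace setting; this is the replacement for the intermediate ring $\UO(N)\ast G/N$ that you correctly observe is unavailable.
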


    Furthermore, on the one side L\"uck proved that having a normal subgroup with vanishing $L^2$-Betti numbers up to degree $k$ for some non-negative integer $k$, implies that the $L^2$-Betti numbers of the group must be zero up to degree $k$ \cite[Theorem 7.2.(2)]{Luck02}. On the other side, Peterson and Thom showed that for $k=0$ the situation is better. Specifically, given $G$ a countable group and $H$ a subgroup in $G$ containing an infinite normal subgroup $N$ in $G$, then the first $L^2$-Betti number of $H$ bounds that of $G$ (see \cite[Theorem 5.6]{PetersonThom_Freiheit} for the complete statement). In this article, we show that there is nothing special about the degree zero case, and we extend this result to higher degrees and even relax the normal condition.

    \begin{thm}\label{thm: control_beta_k_subnormal_for_H}
         Let $G$ be a countable group, $H$ a subgroup in $G$ containing a subnormal subgroup $N$ in $G$ and $k$ a positive integer. Suppose that $\beta_i^{(2)}(N)$ vanishes for all $0\leq i\leq  k-1$. Then $\beta_k^{(2)}(H)\geq \beta_k^{(2)}(G)$.
    \end{thm}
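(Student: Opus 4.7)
The plan is induction on the subnormal length $r$ of $N$ in $G$, reducing ultimately to the normal case handled by \cref{thm: groupoids_beta_k}. First, I would carry out elementary reductions: if $[G:H]<\infty$, the multiplicativity of $L^{2}$-Betti numbers under finite index yields $\beta_{k}^{(2)}(H)=[G:H]\,\beta_{k}^{(2)}(G)\geq\beta_{k}^{(2)}(G)$; if $\beta_{k}^{(2)}(H)=\infty$ the claim is trivial. So I may assume $[G:H]=\infty$ and $\beta_{k}^{(2)}(H)<\infty$, and aim to prove $\beta_{k}^{(2)}(G)=0$. Iteratively applying L\"uck's normal-subgroup vanishing result along the subnormal chain $N=N_{0}\trianglelefteq N_{1}\trianglelefteq\cdots\trianglelefteq N_{r}=G$ then gives $\beta_{i}^{(2)}(N_{j})=0$ for all $0\leq i\leq k-1$ and all $j$; in particular $N_{r-1}\trianglelefteq G$ has vanishing $L^{2}$-Betti numbers up to degree $k-1$.

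The base case $r=1$ is \cref{thm: groupoids_beta_k} applied to $(G,H,N)$. For the inductive step $r\geq 2$, I would introduce the subgroup $H':=HN_{r-1}\leq G$, well-defined because $N_{r-1}\trianglelefteq G$. Applying the base case to $(G,H',N_{r-1})$ immediately yields $\beta_{k}^{(2)}(H')\geq\beta_{k}^{(2)}(G)$, so the remaining task is to establish $\beta_{k}^{(2)}(H)\geq\beta_{k}^{(2)}(H')$. I would reduce this to invoking \cref{thm: groupoids_beta_k} inside the ambient group $H'$ with a subgroup $M\leq H$ that is normal in $H'$ and satisfies $\beta_{i}^{(2)}(M)=0$ for $0\leq i\leq k-1$, together with the standard dichotomy on $[H':H]$ and $\beta_{k}^{(2)}(H)$.

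For $r=2$, the natural choice $M:=N^{H}$ (the normal closure of $N$ in $H$) works cleanly: since $N_{1}\trianglelefteq G$, every $H$-conjugate $hNh^{-1}$ lies in $N_{1}$, whence $M\leq N_{1}$ and $N\trianglelefteq M$ (so L\"uck gives $\beta_{i}^{(2)}(M)=0$ for $0\leq i\leq k-1$); moreover, a short computation using $N_{1}\trianglelefteq G$ shows that for each $x\in N_{1}$ and $h\in H$ the element $h^{-1}xh$ still lies in $N_{1}\subseteq N_{G}(N)$, so each generator $hNh^{-1}$ of $M$ is normal in $N_{1}$, forcing $M$ to be normal in both $H$ and $N_{1}$ and hence in $H'=HN_{1}$. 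For general $r\geq 3$, I would combine this construction with the inductive hypothesis applied to $(N_{r-1},\,H\cap N_{r-1},\,N)$---a subnormal chain of length $r-1$ in $N_{r-1}$---which produces $\beta_{k}^{(2)}(H\cap N_{r-1})\geq\beta_{k}^{(2)}(N_{r-1})$; a case analysis according to whether $[H:H\cap N_{r-1}]$, $[N_{r-1}:H\cap N_{r-1}]$, and $\beta_{k}^{(2)}(H\cap N_{r-1})$ are finite or infinite closes most of the remaining cases through multiplicativity or through \cref{thm: groupoids_beta_k} applied to one of the triples $(H,H\cap N_{r-1},H\cap N_{r-1})$, $(G,N_{r-1},N_{r-1})$, or $(G,HN_{r-1},N_{r-1})$.

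The hard part will be the case where $[H:H\cap N_{r-1}]$, $[N_{r-1}:H\cap N_{r-1}]$, and $\beta_{k}^{(2)}(H\cap N_{r-1})$ are all infinite: there neither finite-index multiplicativity nor the naive applications of \cref{thm: groupoids_beta_k} to the obvious triples suffice to bound $\beta_{k}^{(2)}(HN_{r-1})$ or to produce a subgroup of $H$ which is normal in $H'$ and has vanishing low-degree $L^{2}$-Betti numbers. Resolving this will almost certainly require iterating the $r=2$ normal-closure construction further down the chain $N\trianglelefteq N_{1}\trianglelefteq\cdots\trianglelefteq N_{r-1}$, or appealing directly to the groupoid-theoretic machinery underpinning \cref{thm: groupoids_beta_k} in a more refined way.
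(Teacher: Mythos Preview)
Your approach has a genuine gap that you yourself identify: the inductive step is incomplete for subnormal depth $r\geq 3$. More fundamentally, your opening reduction---to $[G:H]=\infty$, $\beta_{k}^{(2)}(H)<\infty$, and aiming for $\beta_{k}^{(2)}(G)=0$---converts \cref{thm: control_beta_k_subnormal_for_H} into precisely the arbitrary-depth version of \cref{cor: Hillman_depth_2}, i.e.\ Hillman's Hypothesis~$B$, which the paper explicitly says it could \emph{not} settle in full generality. So you are attempting to prove something strictly stronger than what is needed. Your $r=2$ argument is essentially the paper's proof of \cref{cor: Hillman_depth_2} (with $N^{H}$ in place of $\mathrm{core}_{H_{1}}(H)$), but for $r\geq 3$ the normal-closure trick breaks: $hNh^{-1}$ need not be normal in $N_{r-1}$ once $N$ is no longer normal there, so $N^{H}$ is not obviously normal in $HN_{r-1}$, and the case analysis you sketch does not close.

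The paper's proof of \cref{thm: control_beta_k_subnormal_for_H} avoids this obstacle by never invoking \cref{thm: groupoids_beta_k} or any groupoid machinery. It works directly with augmentation ideals: from the short exact sequence
\[
0\longrightarrow I_{G}/I_{H}^{G}\longrightarrow \C[G]\otimes_{\C[H]}\C\longrightarrow \C\longrightarrow 0
\]
and its long exact $\Tor$ sequence, the inequality $\beta_{k}^{(2)}(H)\geq\beta_{k}^{(2)}(G)$ follows once one shows $\dim_{\UO(G)}\Tor_{i}^{\C[G]}(\UO(G),I_{G}/I_{H}^{G})=0$ for $0\leq i\leq k-1$. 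This in turn is reduced to the analogous vanishing for $I_{G}/I_{N}^{G}$, which is proved by induction on the subnormal depth via the filtration $0\to I_{N_{1}}^{G}/I_{N}^{G}\to I_{G}/I_{N}^{G}\to I_{G}/I_{N_{1}}^{G}\to 0$, together with \cref{lem: isomph_ind_augment_ideal}, Shapiro's lemma, and \cref{prop: control_dim_modules} for the normal base case. The induction lives entirely at the level of $\Tor$-dimensions of augmentation-ideal quotients rather than at the level of constructing normal subgroups of intermediate groups, and it goes through uniformly for all depths.
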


    Finally, we mention that J. A. Hillman ran into an interesting question concerning the first $L^2$-Betti number of finitely generated groups in \cite{Hillman_Subnormality} which is used as hypothesis:

    {\it If a finitely generated group $G$ has infinite subgroups $N\leq U$ such that $N$ is subnormal in $G$, $U$ is finitely generated and $[G:U]$ is infinite, then $\beta_1^{(2)}(G)=0$.}

    The above statement (see [\cite{Hillman_Subnormality}, Hypothesis $B$]) enabled Hillman to develop a new homological approach towards some old results on 3-manifold groups due to Elkalla. We did not manage to settle this question in full generality, though \cref{thm: control_beta_k_subnormal_for_H} along with the next statement for $k=1$ provide evidence of a positive answer (in our results $H$ plays the role of $U$ in the above question). 

    \begin{cor}\label{cor: Hillman_depth_2}
        With the above notation, \cref{thm: groupoids_beta_k} still holds if $N$ is a subnormal subgroup in $G$ of depth at most $2$.
    \end{cor}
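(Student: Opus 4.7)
Fix a subnormal chain $N \triangleleft K \triangleleft G$ witnessing the depth-$2$ hypothesis. The plan is to bootstrap Theorem~\ref{thm: groupoids_beta_k} on the triple $(G, HK, K)$, using $K$ as the normal subgroup of $G$ and $HK$ as the intermediate one. The two preparatory inputs are as follows. First, iterating Sauer and Thom's theorem along $N \triangleleft K$ yields $\beta_i^{(2)}(K) = 0$ for all $0 \le i \le k-1$: at each stage the (trivially finite) vanishing $\beta_j^{(2)}(N)=0$ propagates to $\beta_j^{(2)}(K)=0$. Second, $\beta_k^{(2)}(HK)$ is finite: Theorem~\ref{thm: control_beta_k_subnormal_for_H} applied to $(HK, H, N)$, where $N$ is subnormal of depth at most $2$ in $HK$ via $N \triangleleft K \triangleleft HK$, yields $\beta_k^{(2)}(HK) \le \beta_k^{(2)}(H) < \infty$.

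What remains is the infinite-index condition $[G:HK] = \infty$. Via the identity $[G:H] = [G:HK]\cdot [K:H\cap K]$, this is automatic whenever $[K:H\cap K]$ is finite, in which case Theorem~\ref{thm: groupoids_beta_k} applied to $(G, HK, K)$ immediately delivers $\beta_k^{(2)}(G) = 0$. If instead $[K:H\cap K]$ is infinite, I would apply Theorem~\ref{thm: groupoids_beta_k} to $(K, H\cap K, N)$; the normality and index conditions hold, and one must supply the finiteness of $\beta_k^{(2)}(H\cap K)$, which is clear when $[H:H\cap K]$ is finite by multiplicativity from $\beta_k^{(2)}(H)$. This yields $\beta_k^{(2)}(K) = 0$, and a final application of Sauer and Thom's theorem to the normal pair $K \triangleleft G$ gives $\beta_k^{(2)}(G) = 0$.

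The genuinely delicate configuration is the one in which both $[K:H\cap K]$ and $[H:H\cap K]$ are infinite while $[G:HK]$ is finite (including the boundary case $G = HK$). Here I would turn to the module-theoretic Theorem~\ref{thm: module_beta_k}, applied to the triple $(G, K, \C[G/HK])$; the module $\C[G/HK]$ has $K$ acting trivially because $K \triangleleft G$ forces $gKg^{-1}=K \subseteq HK$ for every $g \in G$, and a standard Shapiro-type identification rewrites the output $\dim_{\UO(G)}\Tor_k^{\C[G]}(\UO(G), \C[G/HK])$ as $\beta_k^{(2)}(HK)$, which is what we need to vanish. The main obstacle I foresee is verifying the Tor-finiteness input over $\UO(K)$: since $\C[G/HK]$ is a direct sum of $[G:HK]$ trivial copies of $\C$ as a $\C[K]$-module, the quantity $\dim_{\UO(K)}\Tor_k^{\C[K]}(\UO(K), \C[G/HK])$ equals $[G:HK]\cdot\beta_k^{(2)}(K)$, so the argument reduces to an independent bound on $\beta_k^{(2)}(K)$ — an estimate that I anticipate will require a careful combination of Theorems~\ref{thm: groupoids_beta_k} and~\ref{thm: control_beta_k_subnormal_for_H} internal to $K$.
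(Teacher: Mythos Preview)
Your argument is sound up to the configuration you yourself flag as delicate: $[G:HK]<\infty$ together with $[H:H\cap K]=\infty$. There the detour through \cref{thm: module_beta_k} does not close; as you observe, the Tor-finiteness hypothesis over $\UO(K)$ collapses to finiteness of $\beta_k^{(2)}(K)$, and nothing in the available data controls that quantity. So the proof is genuinely incomplete at this point.

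The idea you are missing is to pass to the normal core. Since $K\triangleleft G$ one has $HK=KH$, and therefore
\[
\mathrm{core}_{HK}(H)=\bigcap_{g\in HK}gHg^{-1}=\bigcap_{k\in K}kHk^{-1}.
\]
Because $N\triangleleft K$ and $N\subseteq H$, every $K$-conjugate of $H$ still contains $N$, so $N\subseteq\mathrm{core}_{HK}(H)$. Then $N$ is subnormal in the core (via $N\triangleleft K\cap\mathrm{core}_{HK}(H)\triangleleft\mathrm{core}_{HK}(H)$), and iterated use of L\"uck's criterion gives $\beta_i^{(2)}(\mathrm{core}_{HK}(H))=0$ for all $i<k$. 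The core is normal in $HK$, is contained in $H$, and in your delicate case $[HK:H]=[K:H\cap K]=\infty$; hence \cref{thm: groupoids_beta_k} applied to the triple $(HK,\,H,\,\mathrm{core}_{HK}(H))$ yields $\beta_k^{(2)}(HK)=0$ directly from the finiteness of $\beta_k^{(2)}(H)$. Finally $\beta_k^{(2)}(G)\le\beta_k^{(2)}(HK)=0$ by \cref{thm: control_beta_k_subnormal_for_H} with the normal subgroup $K$ (or simply by multiplicativity, since $[G:HK]<\infty$). This core argument in fact handles the whole range $[HK:H]=\infty$ uniformly, so the paper's dichotomy is just $[HK:H]<\infty$ versus $[HK:H]=\infty$, and your separate treatment of the subcase $[H:H\cap K]<\infty$ via $(K,H\cap K,N)$ becomes redundant.
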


    Among the consequences, this covers classical results by A. Karras and D. Solitar in \cite{KarrasSolitar}, H. Griffiths in \cite{Griffiths}, and B. Baumslag in \cite{Baumslag_FreeProd}, as well as Theorem 3.1 and Corollary 3.4 in \cite{BridsonHowie} by M. Bridson and J. Howie.

\subsection*{Organization of the paper}

    In \cref{sec: prelims} we recall some notions that will appear throughout the paper. In \cref{sec: hom_alg_dim_theory} we study how the dimension of the functor $\Tor$ is affected by changing dimension-isomorphic modules or rings; the results shown in this section are geared towards proving dimension homological aspects of groupoids. In \cref{sec: hom_features_gpd} we show that the groupoid ring is also free as a module over the groupoid ring of certain finite index subgroupoids, and we state a dimension analogue of Shapiro's lemma. In \cref{sec: algebraic_proofs} we show the module theoretical approach proving Gaboriau's and Sauer and Thom's result with algebraic methods. In \cref{sec: new_vanishing_crit} we prove the new vanishing criteria on $L^2$-Betti numbers of groups, which build on the results of two previous sections, as well as the control theorems aimed to answer Hillman's question.

\subsection*{Acknowledgments} 

    The author is grateful to Andrei Jaikin-Zapirain for many helpful conversations and for having suggested him \cref{thm: finite_dim_ind_zero_dim} which has been the starting point of this work. The author is supported by PID2020-114032GB-I00 of the Ministry of Science and Innovation of Spain.

\section{Preliminaries}\label{sec: prelims}

    Throughout, groups are countable, rings are assumed to be associative and unital, ring homomorphisms preserve the unit and modules are left modules unless otherwise specified.

    \subsection{Ore localization}

        Let $R$ be a ring and $T\subseteq R$ a multiplicative set. We say that $R$ {\it satisfies the left Ore condition with respect to} $T$ if for every $t\in T$ and $r\in R$, $Tr\cap Rt\neq \emptyset$. Whenever this condition holds we can embed $R$ into a ring $S$, namely {\it a left ring of fractions} that we denote by $T^{-1}R$, in which every element of $T$ is invertible and every element of $S$ can be written as $t^{-1}r$ for some $t\in T$ and $r\in R$ (\cite[Theorem 6.2]{GW04}).

        We record here a useful feature of these left rings of fractions.

        \begin{lem}{\cite[Lemma 6.1]{GW04}}\label{lem: common_denominator_ore}
            Suppose $R$ satisfies the left Ore condition with respect to $T$. Then given any $x_1,\ldots, x_n\in T^{-1}R$, there exist $r_1,\ldots, r_n\in R$ and $t\in T$ such that each $x_i$ equals to $t^{-1}r_i$ for $i=1,\ldots, n$.
        \end{lem}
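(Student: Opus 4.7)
The plan is to proceed by induction on $n$. The base case $n=1$ is immediate from the very construction of $T^{-1}R$: every element of the left ring of fractions can by definition be written in the form $t^{-1}r$ with $t\in T$ and $r\in R$. For the inductive step it suffices to establish the binary version, namely to rewrite any two fractions $t_1^{-1}r_1$ and $t_2^{-1}r_2$ over a common left denominator; iterating this reduction then packages $x_1,\ldots,x_n$ into a single common denominator.

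The binary case is a direct application of the left Ore condition to the pair $(t_2,t_1)\in R\times T$: there exist $u\in T$ and $a\in R$ with $ut_2=at_1$. Since $T$ is multiplicatively closed and $u,t_2\in T$, the element $\tau:=ut_2=at_1$ lies in $T$, so in particular $\tau$ is invertible in $T^{-1}R$. A short verification inside the localization then shows
\[t_1^{-1}r_1=\tau^{-1}(ar_1),\qquad t_2^{-1}r_2=\tau^{-1}(ur_2),\]
which is the desired expression over the common denominator $\tau$.

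Feeding this into the induction yields $\tau\in T$ and $r_i'\in R$ with $x_i=\tau^{-1}r_i'$ for every $i$. The only real subtlety is to check, at each step, that the chosen common denominator actually lies in $T$ (and not merely in $R$), so that it is invertible in $T^{-1}R$; this is guaranteed by producing $\tau$ as a product of elements of $T$. I do not expect a genuine obstacle here: the argument is essentially a direct unfolding of the left Ore condition together with the multiplicative closure of $T$, with induction used only to pass from pairs to arbitrary finite tuples.
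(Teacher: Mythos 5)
Your proposal is correct and is essentially the standard argument: the paper itself gives no proof, deferring to \cite[Lemma 6.1]{GW04}, and the proof there is exactly this induction on $n$ with the two-element case handled by applying the left Ore condition to the pair of denominators and using that $T$ is multiplicatively closed to keep the new common denominator inside $T$. The only point worth making explicit is the verification $t_1^{-1}r_1=(at_1)^{-1}(ar_1)$, which follows immediately from $\tau t_1^{-1}r_1=at_1t_1^{-1}r_1=ar_1$ in $T^{-1}R$, so there is no gap.
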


    \subsection{Homological algebra}
    
        Let $R\subseteq S\subseteq T$ be rings and $0\rightarrow M_1 \rightarrow M_2\rightarrow M_3\rightarrow 0$ a short exact sequence of left $R$-modules. For each non-negative integer $k$, taking the associated long exact sequences, we get a commutative diagram
        \[
        \begin{tikzcd}
            \Tor_k^R(S,M_2)\arrow{d}\arrow{r}{i_k}&\Tor_k^R(S,M_3)\arrow{d}\\
            \Tor_k^R(T,M_2)\arrow{r}{j_k}&\Tor_k^R(T,M_3)
        \end{tikzcd}.
        \]
        Not only that, since $S\subseteq T$ we have a commutative diagram 
        \begin{equation}\label{lem: commutative_diagram_Tor}
            \begin{tikzcd}
                T\otimes_S \Tor_k^R(S,M_2)\arrow{d}\arrow{r}&T\otimes_S \im i_k\arrow{d}\\
                \Tor_k^R(T,M_2)\arrow{r}&\im j_k
            \end{tikzcd}
        \end{equation}
        with surjective horizontal maps. This commutative diagram will play a central role in the proof of \cref{thm: module_beta_k}. 
        
        For the reader's convenience, we record a result which we will referred to as Shapiro's lemma from now on.

        \begin{lem}{\cite[Corollary 10.61]{Rotman09}}
            Let $S$ be a ring and $R$ a subring of $S$ such that $S$ is flat as right $R$-module. Then, for any right $S$-module $A$, for any left $R$-module $B$ and for any non-negative integer $k$, it holds
            \[
            \Tor_k^R(A,B)\cong \Tor_k^S(A,S\otimes_R B).
            \]
        \end{lem}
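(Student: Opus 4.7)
The plan is the standard flat base-change argument for $\Tor$, organized as follows. First I would pick a projective resolution $P_\bullet \twoheadrightarrow B$ of $B$ as a left $R$-module, so that by definition $\Tor_k^R(A,B) = H_k(A \otimes_R P_\bullet)$. The key claim to establish is that $S \otimes_R P_\bullet \twoheadrightarrow S \otimes_R B$ is a projective resolution of $S \otimes_R B$ as a left $S$-module; granting this, I would then apply associativity of tensor products to conclude.

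For the claim, exactness is immediate from the hypothesis that $S$ is flat as a right $R$-module, since $S \otimes_R -$ is then an exact functor and preserves the resolution. Projectivity of each $S \otimes_R P_n$ over $S$ is the routine point: it suffices to verify it for free modules, where $S \otimes_R R^{(I)} \cong S^{(I)}$ is free over $S$, and then extend to projectives by writing $P_n$ as a direct summand of a free $R$-module and using that $S \otimes_R -$ respects direct sums and summands.

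With the resolution in hand, I would compute
\[
\Tor_k^S(A, S \otimes_R B) = H_k\bigl(A \otimes_S (S \otimes_R P_\bullet)\bigr),
\]
and invoke the natural associativity isomorphism $A \otimes_S (S \otimes_R P_n) \cong A \otimes_R P_n$ (valid for any right $S$-module $A$ and any left $R$-module $P_n$, since the $S$-action on $A$ absorbs the intermediate $S$-factor). This identification is functorial in $P_\bullet$, so it is compatible with the differentials, and therefore
\[
H_k\bigl(A \otimes_S (S \otimes_R P_\bullet)\bigr) \cong H_k(A \otimes_R P_\bullet) = \Tor_k^R(A,B),
\]
yielding the desired isomorphism.

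There is no real obstacle here; the only step requiring mild care is the verification that $S \otimes_R P$ remains projective over $S$ when $P$ is projective over $R$, and the check that the associativity isomorphism is natural in the resolution variable so that it commutes with the boundary maps. Both are standard, so the proposition follows essentially formally from flatness of $S$ over $R$ together with the associativity of the tensor product.
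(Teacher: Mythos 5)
Your argument is correct and is precisely the standard flat base-change proof that the paper's cited reference (Rotman, Corollary 10.61) uses; the paper itself gives no independent proof, only the citation. The two points you flag as needing care --- that $S\otimes_R P$ stays projective over $S$ and that the associativity isomorphism $A\otimes_S(S\otimes_R P)\cong A\otimes_R P$ is natural in $P$ --- are exactly the right ones, and both go through as you describe.
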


        Recall that a ring $R$ is called {\it left} (resp. {\it right}) {\it semihereditary} if every finitely generated submodule of a projective left (resp. right) $R$-module is projective. A ring $R$ is called {\it semihereditary} if $R$ is both left and right semihereditary. Semihereditary rings enjoy the following property.

        \begin{prop}{\cite[Theorem 4.32]{Rotman09}}\label{prop: submod_flat_is_flat_if_semihered}
            Let $R$ be a semihereditary ring, then every submodule of a flat $R$-module is flat.
        \end{prop}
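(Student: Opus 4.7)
The plan is to reduce flatness of $M$ to injectivity of the tensor maps $I \otimes_R M \to M$ for finitely generated right ideals $I$, and then to use the semihereditary hypothesis to make each such $I$ flat, which in turn makes the functor $I \otimes_R -$ preserve the inclusion $M \hookrightarrow F$. The starting point is the standard ideal-theoretic criterion for flatness: a left $R$-module $M$ is flat if and only if for every finitely generated right ideal $I \subseteq R$, the natural map $I \otimes_R M \to M$ is injective, or equivalently $\Tor_1^R(R/I, M)=0$. This reduces the task to a statement about one finitely generated right ideal at a time.

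Since $R$ is semihereditary, any finitely generated right ideal $I$ is a finitely generated submodule of the free right $R$-module $R$, hence projective, and in particular flat as a right $R$-module. Thus, given a submodule $M\hookrightarrow F$ of a flat module $F$, applying $I\otimes_R -$ to the inclusion gives an injection $I\otimes_R M \hookrightarrow I\otimes_R F$, while flatness of $F$ ensures $I\otimes_R F \hookrightarrow F$ is injective as well. A short diagram chase in the commutative square
\[
\begin{tikzcd}
I\otimes_R M \arrow[r] \arrow[d] & I\otimes_R F \arrow[d] \\
M \arrow[r] & F
\end{tikzcd}
\]
then shows that the composition $I\otimes_R M \to F$ along the upper--right path is injective, which forces the left vertical map $I\otimes_R M \to M$ to be injective as well. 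Since $I$ was arbitrary, $M$ is flat.

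I do not anticipate a serious obstacle: the argument is a direct diagram chase once the ideal criterion is in place. The only subtlety is recognizing that the semihereditary hypothesis enters at exactly the right point, namely to make each test ideal $I$ flat as a right $R$-module, so that $I\otimes_R -$ preserves the monomorphism $M\hookrightarrow F$; without that, the top horizontal arrow above need not be injective and the argument collapses.
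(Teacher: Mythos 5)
Your argument is correct. Note that the paper itself gives no proof of this proposition --- it is quoted verbatim from Rotman \cite[Theorem 4.32]{Rotman09} --- so there is no internal argument to compare against; your proof is the standard one. The two ingredients you use are exactly right: the ideal criterion reduces flatness of $M$ to injectivity of $I\otimes_R M\to M$ for finitely generated right ideals $I$, and the (right) semihereditary hypothesis makes each such $I$ projective, hence flat, so that $I\otimes_R-$ preserves the monomorphism $M\hookrightarrow F$; combined with $\Tor_1^R(R/I,F)=0$ the diagram chase closes the argument. The only point worth making explicit is that you need the \emph{right} semihereditary half of the hypothesis (finitely generated right ideals projective) to prove flatness of \emph{left} modules, which is consistent with the paper's two-sided definition of semihereditary.
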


        In \cite[Theorem 6.7 on pp. 239 and 288]{Luck02} it is shown that:

        \begin{thm}\label{thm: von_Neumann_alg_is_semihered}
            Every von Neumann algebra $\VN$ is a semihereditary ring.
        \end{thm}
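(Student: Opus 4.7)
The plan is to exploit the rich $*$-algebraic structure of $\VN$, in particular the fact that every von Neumann algebra is a Baer $*$-ring: the right (resp. left) annihilator of any subset is generated by a projection. One shows first that every principal ideal is projective (``Rickart property''), and then upgrades this to arbitrary finitely generated ideals via the ambient matrix algebra.

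For the principal case, given $a \in \VN$, I would use polar decomposition to write $a = v|a|$, where $v \in \VN$ is a partial isometry with initial projection $p = v^{*}v$ and range projection $q = vv^{*}$. A direct computation shows $\mathrm{r.ann}(a) = (1-p)\VN$, so the surjection $\VN \twoheadrightarrow a\VN$ given by right multiplication by $a$ has a splitting afforded by the idempotent decomposition $\VN = p\VN \oplus (1-p)\VN$. Hence $a\VN \cong p\VN$ is a direct summand of $\VN$, and in particular projective. The $*$-involution yields the symmetric statement for principal left ideals.

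For the finitely generated case, let $I = a_1\VN + \cdots + a_n\VN$, and consider the surjection $\phi \colon \VN^n \twoheadrightarrow I$ sending the standard basis to the generators. It suffices to show $\ker\phi$ is a direct summand of $\VN^n$. But $\ker\phi$ is precisely the right annihilator, inside the column module $\VN^n$, of the row $(a_1,\ldots,a_n)$, which in turn is the right annihilator of a single element of the matrix algebra $M_n(\VN)$. Since $M_n(\VN)$ is itself a von Neumann algebra and therefore a Baer $*$-ring, this annihilator is generated by a projection $e \in M_n(\VN)$, whence $\ker\phi = e\VN^n$ is a direct summand of $\VN^n$. Consequently $I$ is projective. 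Applying the involution gives the same conclusion for finitely generated left ideals, so $\VN$ is semihereditary.

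The main obstacle, in my view, is the bookkeeping in the second step: one must verify carefully that the annihilator computation inside $M_n(\VN)$ really produces the algebraic module $\ker\phi$ (rather than a strong-operator closure of it), which is precisely where the Baer property — rather than the merely topological fact that closed right ideals are generated by projections — becomes indispensable. Once this point is settled, the semihereditariness of $\VN$ follows uniformly from the projection lattice structure, and hence \cref{prop: submod_flat_is_flat_if_semihered} will apply to submodules of flat $\VN$-modules in later sections.
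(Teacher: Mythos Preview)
Your argument is correct. The paper, however, does not prove this statement at all: it merely quotes it as \cite[Theorem 6.7 on pp.~239 and 288]{Luck02} and moves on. So there is no ``paper's own proof'' to compare against; you have supplied a self-contained proof where the paper only gives a citation.

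For what it is worth, the route you take --- von Neumann algebras are Baer $*$-rings, hence Rickart, hence principal one-sided ideals are generated by projections, and then pass to $M_n(\VN)$ for the finitely generated case --- is exactly the standard argument (and is essentially what L\"uck does in the cited reference). Your identification of the delicate point is also accurate: one needs the \emph{algebraic} right annihilator of an element to be a projection ideal, not merely its weak-operator closure, and this is precisely what the Baer $*$-property (via polar decomposition and the existence of range projections inside $\VN$) guarantees. One small simplification: in the principal case you do not actually need the full polar decomposition; it suffices to observe that $\mathrm{r.ann}(a)=\mathrm{r.ann}(a^{*}a)=(1-p)\VN$ where $p$ is the range projection of $a^{*}a$, which already lies in $\VN$.
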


        For finite von Neumann algebras there is another situation in which flat modules appear. 

        \begin{prop}{\cite[Theorem 1.48]{SauerGroupoids}}\label{prop: trace_preserving_flat}
            Any trace-preserving $*$-homomorphism between finite von Neumann algebras is a faithfully flat ring extension.
        \end{prop}

    \subsection{Left ideals in group algebras}

        Let $G$ be a group, we denote by $I_G$ the augmentation ideal of the group algebra $\C[G]$. If $H$ is a subgroup of $G$ we denote by $I_H^G$ the left ideal of $\C[G]$ generated by $I_H$. We recall the following standard result (see, for instance, \cite[Lemma 2.1]{Jaikin_freeqgp}).

        \begin{lem}\label{lem: isomph_ind_augment_ideal}
            Let $N\leq H$ be subgroups of $G$. Then the following holds.
            \begin{enumerate}[label=(\roman*)]
                \item The canonical map $\C[G]\otimes_{\scalebox{0.8}{$\C[H]$}} I_H\rightarrow I_H^G$ sending $a\otimes b$ to $ab$, is an isomorphism of left $\C[G]$-modules.
                \item The canonical map $\C[G]\otimes_{\scalebox{0.8}{$\C[H]$}}I_H/I_N^H\rightarrow I_H^G/I_N^G$, sending $a\otimes (b+I_N^H)$ to $ab+I_N^G$, is an isomorphism of left $\C[G]$-modules.
            \end{enumerate}
        \end{lem}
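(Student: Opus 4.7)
My plan is to deduce both isomorphisms from the fact that $\C[G]$ is a free (hence flat) right $\C[H]$-module, via a left coset transversal of $H$ in $G$.

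For part (i), I would apply $\C[G]\otimes_{\C[H]} -$ to the short exact sequence $0\to I_H\to \C[H]\to \C\to 0$ of left $\C[H]$-modules (where $\C$ carries the trivial $H$-action). Flatness of $\C[G]$ over $\C[H]$ preserves exactness, yielding
\[
0\to \C[G]\otimes_{\scalebox{0.8}{$\C[H]$}} I_H\to \C[G]\to \C[G]\otimes_{\scalebox{0.8}{$\C[H]$}}\C\to 0,
\]
and the last module is naturally the permutation $\C[G]$-module $\C[G/H]$. Under the composite $\C[G]\to \C[G/H]$, the element $g$ is sent to the coset $gH$, and I would check by hand that the kernel equals $I_H^G$: the inclusion $I_H^G\subseteq\Ker$ is immediate since $h-1\mapsto 0$, while the reverse inclusion follows by picking a coset transversal and writing any kernel element as a sum indexed by this transversal. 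The induced isomorphism $\C[G]\otimes_{\C[H]}I_H\to I_H^G$ is then precisely $a\otimes b\mapsto ab$.

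For part (ii), I would start from the short exact sequence $0\to I_N^H\to I_H\to I_H/I_N^H\to 0$ of left $\C[H]$-modules and tensor over $\C[H]$ with $\C[G]$; again flatness preserves exactness. Applying part (i) twice — to the pair $N\le H$ and to the pair $N\le G$ — together with the associativity of tensor product, gives
\[
\C[G]\otimes_{\scalebox{0.8}{$\C[H]$}} I_N^H \cong \C[G]\otimes_{\scalebox{0.8}{$\C[H]$}}\bigl(\C[H]\otimes_{\scalebox{0.8}{$\C[N]$}} I_N\bigr) \cong \C[G]\otimes_{\scalebox{0.8}{$\C[N]$}} I_N \cong I_N^G,
\]
and similarly $\C[G]\otimes_{\C[H]} I_H\cong I_H^G$. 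After tracking the canonical maps, the resulting short exact sequence identifies with $0\to I_N^G\to I_H^G\to \C[G]\otimes_{\C[H]}(I_H/I_N^H)\to 0$, whence the cokernel is $I_H^G/I_N^G$ and the described map is an isomorphism.

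The only real care required is bookkeeping: making sure that in each step the candidate map $a\otimes b\mapsto ab$ (resp.\ $a\otimes(b+I_N^H)\mapsto ab+I_N^G$) is the one produced by the functoriality of the tensor product and the connecting identifications. Once the coset-transversal decomposition is set up in (i), everything in (ii) is a formal consequence of flatness and associativity of $\otimes$, so I do not anticipate any substantive obstacle.
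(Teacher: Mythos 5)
Your argument is correct. The paper itself does not prove this lemma; it quotes it as a standard fact with a reference to \cite[Lemma 2.1]{Jaikin_freeqgp}, and your proof is precisely the standard argument: freeness (hence flatness) of $\C[G]$ as a right $\C[H]$-module applied to the sequences $0\to I_H\to \C[H]\to\C\to 0$ and $0\to I_N^H\to I_H\to I_H/I_N^H\to 0$, with the identification $\C[G]\otimes_{\scalebox{0.8}{$\C[H]$}}I_N^H\cong \C[G]\otimes_{\scalebox{0.8}{$\C[N]$}}I_N\cong I_N^G$ supplied by part (i). The only point worth stating explicitly in a write-up is the compatibility you flag at the end, namely that under these identifications the map $\C[G]\otimes_{\scalebox{0.8}{$\C[H]$}}I_N^H\to\C[G]\otimes_{\scalebox{0.8}{$\C[H]$}}I_H$ becomes the inclusion $I_N^G\hookrightarrow I_H^G$, which is immediate since every map in sight is induced by multiplication; there is no gap.
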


    \subsection{Sylvester rank functions}
    
        \begin{defn}\label{defn: SMRF}
            A {\it Sylvester module rank function} or simply {\it SMRF} $\dim$ on $R$ is a map that assigns a non-negative real number to each finitely presented left $R$-module and that satisfies the following properties:
            \begin{enumerate}[label=(\roman*)]
                \item $\dim(0)=0$ and $\dim(R)=1$;
                \item $\dim(M_1\oplus M_2)=\dim(M_1)+\dim(M_2)$; and
                \item if $M_1\rightarrow M_2\rightarrow M_3\rightarrow 0$ is exact then
                \[
                \dim(M_1)+\dim(M_3)\geq \dim(M_2)\geq \dim(M_3). 
                \]
            \end{enumerate}
        \end{defn}

        In \cite{LiSMRF}, H. Li extends each Sylvester module rank function to all pairs of left $R$-modules $M_1\subseteq M_2$.
        
        \begin{defn}\label{defn: BSMRF}
            A {\it bivariant Sylvester module rank function} or simply {\it BSMRF} $\dim(\cdot$\textbar$\cdot)$ on $R$ is an $\R_{\geq 0}\cup \{+\infty\}$-valued function $(M_1,M_2)\mapsto \dim(M_1$\textbar$M_2)$ on the class of all pairs of $R$-modules $M_1\subseteq M_2$ satisfying the following conditions:
            \begin{enumerate}[label=(\roman*)]
                \item $\dim(M_1$\textbar $M_2)$ is an isomorphism invariant;
                \item (normalization) setting $\dim(M)=\dim(M$\textbar $M)$ for all $R$-modules $M$, one has $\dim(0)=0$ and $\dim(R)=1$;
                \item (direct sum) for any $R$-modules $M_3\subseteq M_4$, one has
                \[
                \dim(M_1\oplus M_3\mbox{\textbar} M_2\oplus M_4)=\dim(M_1\mbox{\textbar} M_2)+\dim(M_3\mbox{\textbar} M_4);
                \]
                \item (continuity) $\dim(M_1$\textbar $M_2)=\sup_{M_1'}\dim(M_1'$\textbar $M_2)$ for $M_1'$ ranging over all finitely generated $R$-submodules of $M_1$;
                \item (continuity) if $M_1$ is finitely generated, $\dim(M_1$\textbar $M_2)=\inf_{M_2'}\dim(M_1$\textbar $M_2')$ for $M_2'$ ranging over all finitely generated $R$-submodules of $M_2$ containing $M_1$; and
                \item (additivity) $\dim(M_2)=\dim(M_1$\textbar $M_2)+\dim(M_2/M_1)$.
            \end{enumerate}
        \end{defn}

        As aforementioned, this notion is the right way to extend a SMRF $\dim$ so that we can compute dimensions of arbitrary modules. Briefly, first one can set the dimension of a finitely generated module $M$ as $\inf_{M'}\dim(M')$ for $M'$ ranging over all finitely presented $R$-modules which admit $M$ as a quotient module, which extends $\dim$ for finitely presented $R$-module (see \cite[Lemma 3.8]{LiSMRF}). Second, for any pair of finitely generated $R$-modules $M_1\subseteq M_2$ define $\dim(M_1$\textbar $M_2)=\dim(M_2)-\dim(M_2/M_1)$. Finally, extend it to arbitrary modules via \cref{defn: BSMRF}.

        \begin{thm}{\cite[Theorem 3.3]{LiSMRF}}\label{thm: SMRF_to_BSMRF}
            Every Sylvester module rank function for $R$ extends uniquely to a bivariant Sylvester module rank function for $R$.
        \end{thm}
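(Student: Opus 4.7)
The plan is to follow the three-stage extension procedure already sketched after the statement: extend $\dim$ from finitely presented modules to finitely generated modules, then to a bivariant function on pairs of finitely generated modules, and finally, via the continuity axioms, to all pairs $M_1\subseteq M_2$ of $R$-modules. Uniqueness will then be almost formal, since the axioms (ii), (vi), (iv), (v) force the values of any BSMRF at every stage.

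First I would define, for a finitely generated $R$-module $M$,
\[
\dim(M)=\inf_{M'}\dim(M'),
\]
the infimum being over finitely presented $R$-modules $M'$ surjecting onto $M$. The two things to check are that this agrees with the original value when $M$ is finitely presented (using axiom (iii) of \cref{defn: SMRF} applied to $M'\twoheadrightarrow M$ together with the fact that the kernel of such a surjection is generated by finitely many relations, each of which only decreases $\dim$ in a controlled way) and that (i)--(iii) of \cref{defn: SMRF} continue to hold for finitely generated modules. Both are by now standard diagram-chasing arguments around surjections $R^n\twoheadrightarrow M$; the only mildly delicate point is subadditivity, which one proves by taking two finitely presented approximations and combining them into a finitely presented approximation of $M_1\oplus M_2$.

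Next, for finitely generated $M_1\subseteq M_2$, define
\[
\dim(M_1\mbox{\textbar} M_2)=\dim(M_2)-\dim(M_2/M_1).
\]
Here I would verify isomorphism invariance, non-negativity (from axiom (iii) applied to the short exact sequence), additivity (vi), direct sum behavior (iii), and finally the two compatibility statements needed for the continuity axioms, namely that $\dim(M_1\mbox{\textbar}M_2)$ is monotone in $M_2$ (increasing in $M_1$, decreasing in $M_2$) in the appropriate sense. For the direct sum axiom one just notes that $(M_2\oplus M_4)/(M_1\oplus M_3)\cong M_2/M_1\oplus M_4/M_3$ and invokes the SMRF additivity on quotients.

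Now I would pass to arbitrary modules in two steps, exactly as axioms (iv)--(v) demand. For $M_1$ finitely generated and $M_2$ arbitrary, set
\[
\dim(M_1\mbox{\textbar}M_2)=\inf_{M_2'}\dim(M_1\mbox{\textbar}M_2'),
\]
with $M_2'$ ranging over finitely generated submodules of $M_2$ containing $M_1$; the monotonicity proved above makes this a decreasing net, so the infimum is well-defined. Then for arbitrary $M_1\subseteq M_2$ set
\[
\dim(M_1\mbox{\textbar}M_2)=\sup_{M_1'}\dim(M_1'\mbox{\textbar}M_2),
\]
with $M_1'$ ranging over finitely generated submodules of $M_1$. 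Routine but somewhat tedious limit arguments, exchanging sup's and inf's along cofinal systems of finitely generated submodules, show that the resulting function satisfies axioms (i)--(vi) of \cref{defn: BSMRF}. Uniqueness follows because any BSMRF agrees with the SMRF on $(R,R)$ by (ii), is forced on pairs of finitely generated modules by (vi), and is forced on arbitrary pairs by (iv) and (v).

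The main obstacle is the verification of axioms (iii) and (vi) after the continuity extension: one must check that additivity and the direct-sum identity survive the two limit operations $\sup_{M_1'}$ and $\inf_{M_2'}$, and this requires carefully choosing cofinal families of finitely generated submodules of $M_1$, $M_2$, $M_3$, $M_4$ that are simultaneously compatible with the sums and quotients appearing in the axiom. The rest of the argument is bookkeeping around already established properties on finitely generated modules.
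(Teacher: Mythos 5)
The paper does not prove this statement at all: it is imported verbatim from Li's work (\cite[Theorem 3.3]{LiSMRF}), and the only thing the paper offers is the three-stage construction sketch in the paragraph preceding the theorem --- which is exactly the route you follow (finitely presented $\to$ finitely generated via $\inf$ over presentations, then $\dim(M_1\mbox{\textbar}M_2)=\dim(M_2)-\dim(M_2/M_1)$ on finitely generated pairs, then the two continuity limits). Your outline is therefore the same as the paper's and as Li's; just be aware that the substance of the theorem lives precisely in the verifications you defer (monotonicity of $\dim(M_1\mbox{\textbar}M_2)$ in $M_2$ for finitely generated pairs, which is what makes the $\inf$ well behaved, and the survival of axioms (iii) and (vi) under the $\sup$/$\inf$ passages), and that in the uniqueness argument the value on a finitely generated but not finitely presented module is forced only by combining (vi) with (iv) applied to the relation submodule of $R^n$, not by (vi) alone.
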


        Moreover, this BSMRF has desirable continuity properties.
        
        \begin{lem}{\cite[Proposition 3.23]{LiSMRF}}\label{lem: continuity_dim}
            Let $\dim(\cdot$\textbar$\cdot)$ be a bivariant Sylvester module rank function for $R$. The following holds.
            \begin{enumerate}[label=(\roman*)]
                \item $\dim(M_1$\textbar $M_2)$ is increasing in $M_1$, that is, for any $R$-module $M_1\subseteq M_1'\subseteq M_2$, one has $\dim(M_1$\textbar $M_2)\leq \dim(M_1'$\textbar$M_2)$.
                \item $\dim(M_1$\textbar $M_2)$ is decreasing in $M_2$, that is, for any $R$-module $M_1\subseteq M_2\subseteq M_2'$, one has $\dim(M_1$\textbar $M_2)\geq \dim(M_1$\textbar$M_2')$.
            \end{enumerate}
        \end{lem}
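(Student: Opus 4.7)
Both inequalities follow directly from the two continuity axioms (iv) and (v) of \cref{defn: BSMRF}, once they are reinterpreted as monotonicity statements about suprema and infima over families of finitely generated submodules. The whole argument is essentially bookkeeping: passing from $M_1$ to $M_1'$ enlarges the family of finitely generated submodules appearing inside a supremum, while passing from $M_2$ to $M_2'$ enlarges the family appearing inside an infimum.

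For (i), the idea is to apply axiom (iv) to both sides: $\dim(M_1 \mid M_2) = \sup_N \dim(N \mid M_2)$, where $N$ ranges over finitely generated submodules of $M_1$, and $\dim(M_1' \mid M_2) = \sup_{N'} \dim(N' \mid M_2)$, with $N'$ ranging over finitely generated submodules of $M_1'$. Since $M_1 \subseteq M_1'$, every $N$ in the first family also lies in the second family, so the supremum on the right is taken over a strictly larger index set; the inequality $\dim(M_1 \mid M_2) \leq \dim(M_1' \mid M_2)$ is then immediate.

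For (ii), I would first dispose of the special case where $M_1$ is finitely generated using axiom (v): in that case $\dim(M_1 \mid M_2) = \inf_N \dim(M_1 \mid N)$, with $N$ ranging over the finitely generated submodules of $M_2$ containing $M_1$, and similarly $\dim(M_1 \mid M_2') = \inf_{N'} \dim(M_1 \mid N')$ with $N'$ ranging over finitely generated submodules of $M_2'$ containing $M_1$. Since $M_2 \subseteq M_2'$, any admissible $N$ for the first infimum is admissible for the second, so the second infimum is taken over a larger family and is therefore no larger than the first, yielding $\dim(M_1 \mid M_2) \geq \dim(M_1 \mid M_2')$ when $M_1$ is finitely generated. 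To remove this hypothesis, I would invoke axiom (iv) once more to write $\dim(M_1 \mid M_2) = \sup_N \dim(N \mid M_2)$ and $\dim(M_1 \mid M_2') = \sup_N \dim(N \mid M_2')$, both suprema taken over finitely generated submodules $N \subseteq M_1$, and then apply the finitely generated case of (ii) termwise before passing to the supremum.

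I do not anticipate any genuine obstacle: the axioms of a BSMRF are essentially tailored to make such monotonicity statements automatic. The only conceptual subtlety worth flagging is the asymmetry between the two continuity axioms — (iv) applies unconditionally whereas (v) requires $M_1$ to be finitely generated — which is precisely why (ii) needs an extra reduction step that (i) does not.
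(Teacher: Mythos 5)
Your proof is correct. The paper itself gives no proof of this lemma --- it is quoted directly from Li's work on bivariant Sylvester module rank functions --- and your derivation from the two continuity axioms (iv) and (v) of \cref{defn: BSMRF} (monotonicity of suprema and infima over nested families of finitely generated submodules, with the reduction of (ii) to the finitely generated case via (iv)) is complete and is essentially the standard argument.
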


        It is a well-known fact that if $\varphi:R\rightarrow T$ is a ring homomorphism and $T$ has a SMRF $\dim_T$ then $R$ inherits a SMRF, namely $\dim_R (M):=\dim_T (T\otimes_R M)$ for every finitely presented left $R$-module $M$. This is the so-called {\it induced} SMRF from $T$ to $R$. Similarly, whenever $T$ has a BSMRF so does $R$. In this direction, for finite von Neumann algebras one has the following:

        \begin{prop}{\cite[Theorem 2.6]{SauerGroupoids}}\label{prop: dim_extension_for_finite_vNa}
            Let $\phi:\mathcal{A}\rightarrow\mathcal{B}$ be a trace-preserving $*$-homomorphism between finite von Neumann algebras $\mathcal{A}$ and $\mathcal{B}$. Then for every $\mathcal{A}$-module $M$ we have
            \[
            \dim_{\mathcal{A}}(M)=\dim_{\mathcal{B}}(\mathcal{B}\otimes_{\mathcal{A}}M).
            \]
        \end{prop}
        
        In the forthcoming sections most of the SMRF we will be working with are {\it exact}, that is, if $0\rightarrow M_1\rightarrow M_2\rightarrow M_3\rightarrow 0$ is an exact sequence of left modules then $\dim(M_1)+\dim(M_3)=\dim(M_2)$. As a matter of fact (see \cite[Proposition 4.2]{LiSMRF}), whenever a BSMRF is exact on finitely presented modules, then for any $R$-modules $M_1\subseteq M_2$ one has $\dim(M_1$\textbar $M_2)=\dim(M_1)$. Furthermore, for any $R$-module $M$ $\dim(M)$ can be computed as $\sup_{M'}\dim(M')$ for $M'$ ranging over the finitely generated $R$-submodules of $M$. In particular, this applies to von Neumann regular rings with SMRF, since all finitely presented modules are projective (cf. \cite[Theorem 4.9 \& Theorem 3.63]{Rotman09}), and to finite von Neumann algebras \cite[Theorem 6.7]{Luck02}.

        On the other hand, there is a dual notion to that of Sylvester module rank function. Let $R$ be a ring. A \textit{Sylvester matrix rank function} $\rk$ on $R$ is a function that assigns a non-negative real number to each matrix over $R$ and satisfies the following conditions:
        \begin{enumerate}[label=(\roman*)]
            \item $\rk(A)=0$ if $A$ is any zero matrix and $\rk(1)=1$;
            \item $\rk(AB)\leq \min\{\rk(A),\rk(B)\}$ for any matrices $A$ and $B$ which can be multiplied;
            \item $\rk\begin{psmallmatrix}
                 A & 0 \\
                 0 & B
            \end{psmallmatrix}=\rk(A)+\rk(B)$ for any matrices $A$ and $B$; and
            \item $\rk \begin{psmallmatrix}
                 A & C \\
                 0 & B
            \end{psmallmatrix} \geq \rk(A)+\rk(B)$ for any matrices $A,B$ and $C$ of appropriate sizes.
        \end{enumerate}

        As one might expect, there is a bijective correspondence between this two notions. Specifically, given a Sylvester matrix rank function $\rk$ we can define a Sylvester module rank function $\dim$ by assigning to any finitely presented left $R$-module with presentation $M=R^m/R^nA$ for some $A\in \Mat_{n\times m}(R)$, the value $\dim(M):=m-\rk(A)$; and conversely, we get a Sylvester matrix rank function by setting $\rk(A):=m-\dim(R^m/R^nA)$ (cf. \cite[Theorem 4]{Malcolmson} and \cite[Proposition 1.2.8]{DLopezAlvarezThesis}).

    \subsection{Von Neumann dimension}\label{subsec: von_Neumann_dim}

        A countable group $G$ acts by left and right multiplication on $\ell^2(G)$. We can extend the right action of $G$ on $\ell^2(G)$ to the group algebra $\C[G]$; and hence consider $\C[G]$ as a subalgebra of the bounded linear operators on $\ell^2(G)$, namely $\mathcal{B}(\ell^2(G))$.

        A finitely generated {\it Hilbert} $G$-module is a closed subspace $V\leq \ell^2(G)^n$ invariant under the left action of $G$. Given such $V$, there exists $\pr_V$, the orthogonal projection onto $V$. We set 
        \[
        \dim_G(V):=\trace_G(\pr_V):=\sum_{i=1}^n \langle \pr_V(1_i),1_i\rangle_,
        \]
        where $1_i$ is the element of $\ell^2(G)^n$ having $1$ in the $i$th entry and $0$ in the rest of the entries. We call this number the {\it von Neumann dimension} of $V$. This dimension induces a Sylvester matrix rank function on $\C[G]$. Concretely, given $A\in \Mat_{n\times m}(\C[G])$ we can then associate the natural bounded linear operator $\phi_G^A:\ell^2(G)^n\rightarrow\ell^2(G)^m$ given by right multiplication, and define $\rk_G(A)=\dim_G \overline{\im \phi_G^A}$.

        The {\it group von Neumann algebra} $\VN(G)$ of $G$ is the algebra of left $G$-equivariant bounded operators on $\ell^2(G)$
        \[
        \VN(G):=\{\phi\in \mathcal{B}(\ell^2(G)):\phi(gv)=g\phi(v)\mbox{ for all }g\in G,v\in \ell^2(G)\}.
        \]
        The ring $\VN(G)$ is a finite von Neumann algebra. In particular, it is semihereditary according to \cref{thm: von_Neumann_alg_is_semihered}. Moreover, $\VN(G)$ satisfies the left Ore condition with respect to non-zero divisors (a result proved by S. K. Berberain in \cite{Berberian82}). We denote by $\UO(G)$ the left classical ring of fractions and called it the {\it ring of unbounded operators affiliated to} $G$. The ring $\UO(G)$ has nice properties, one of which is being a regular ring. We can extend the Sylvester matrix rank function $\rk_G$ to $\UO(G)$ as follows
        \[
        \rk_G(s^{-1}r):=\rk_G(r)=\dim_G(\overline{\ell^2(G)r}).
        \]
        This Sylvester matrix rank function is indeed the one associated to the L\"uck's dimension $\dim_{\scalebox{0.8}{$\UO(G)$}}$ \cite[Definition 8.28]{Luck02} (cf. \cite[Lemma 4.2.4]{DLopezAlvarezThesis})

        We record a well-known fact.
        
        \begin{lem}\label{lem: UOdim_equal_vNdim_for_elements_in_vN}
            Let $a_1,\ldots, a_l\in \VN(G)$, then $\dim_{\scalebox{0.8}{$\UO(G)$}}(\UO(G)a_1+\ldots+\UO(G)a_l)$ equals to $\dim_{\scalebox{0.8}{$\VN(G)$}}(\VN(G)a_1+\ldots+\VN(G)a_l)$.
        \end{lem}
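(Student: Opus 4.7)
The plan is to reduce both dimensions to a single value of the Sylvester matrix rank function $\rk_G$. Let $A$ denote the $l\times 1$ column matrix with entries $a_1,\ldots,a_l\in\VN(G)$. For $R$ equal to either $\VN(G)$ or $\UO(G)$, the left ideal $Ra_1+\cdots+Ra_l$ is precisely the image of the left $R$-linear map $R^l\to R$ given by right multiplication by $A$, so its cokernel admits the finite presentation $R^1/R^l A$. The correspondence between Sylvester module and matrix rank functions recalled in the preliminaries then gives $\dim_R(R^1/R^l A)=1-\rk_R(A)$.

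Both $\VN(G)$ (as a finite von Neumann algebra) and $\UO(G)$ (as a von Neumann regular ring) carry exact SMRFs, as recalled in the preliminaries. Hence additivity applied to the short exact sequence
\[
0 \longrightarrow Ra_1+\cdots+Ra_l \longrightarrow R \longrightarrow R^1/R^l A \longrightarrow 0
\]
yields $\dim_R(Ra_1+\cdots+Ra_l)=\rk_R(A)$ for both choices of $R$. It remains to identify $\rk_{\VN(G)}(A)$ with $\rk_{\UO(G)}(A)$, and this is built into the construction: the rank function on $\UO(G)$ is defined as the Ore extension $\rk_G(s^{-1}r):=\rk_G(r)$ of the rank function on $\VN(G)$, and each entry of $A$ already lies in $\VN(G)$ (take $s=1$).

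The proof is essentially a bookkeeping argument through the dictionary between Sylvester module and matrix rank functions, so I anticipate no real obstacle. The one technical point worth flagging is that additivity is applied to the submodule $Ra_1+\cdots+Ra_l$, which is finitely generated but a priori not finitely presented over $R$; this is legitimate precisely because the SMRFs on $\VN(G)$ and $\UO(G)$ are exact on arbitrary modules, not only on finitely presented ones.
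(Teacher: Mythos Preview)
Your proof is correct but follows a different path from the paper's. The paper argues structurally: since $\VN(G)$ is semihereditary, the finitely generated submodule $\VN(G)a_1+\cdots+\VN(G)a_l$ of $\VN(G)$ is projective; then flatness of $\UO(G)$ over $\VN(G)$ (as an Ore localization) guarantees that $\UO(G)\otimes_{\VN(G)}(\VN(G)a_1+\cdots+\VN(G)a_l)\cong \UO(G)a_1+\cdots+\UO(G)a_l$, after which the dimension equality follows from the compatibility of $\dim_{\UO(G)}$ with $\dim_{\VN(G)}$ under extension of scalars. Your argument instead routes everything through the Sylvester matrix rank of the column matrix $A$, invoking only exactness of the two SMRFs and the fact that $\rk_{\UO(G)}$ restricts to $\rk_{\VN(G)}$ on matrices with entries in $\VN(G)$. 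This is more elementary in that it avoids the semihereditary and flatness inputs, but it leans on one point the paper states only for single elements: the extension formula $\rk_G(s^{-1}r)=\rk_G(r)$ is written for $1\times 1$ matrices, so strictly speaking you should remark that the same identification holds for arbitrary matrices (e.g.\ via a common left denominator together with invariance of rank under invertible left factors, or directly because both rank functions compute $\dim_G$ of the closure of the image of right multiplication in $\ell^2(G)^k$). With that small addendum your argument is complete.
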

        \begin{proof}
            Since $\VN(G)$ is a semihereditary ring, $\VN(G)a_1+\ldots+\VN(G)a_l$ is a projective left $\VN(G)$-module. Hence, since the left Ore localization is a flat right module \cite[Corollary 10.13]{GW04}, according to \cite[Corollary 3.59]{Rotman09} $\UO(G)\otimes_{\scalebox{0.8}{$\VN(G)$}}(\VN(G)a_1+\ldots+\VN(G)a_l)$ is isomorphic to $\UO(G)a_1+\ldots+\UO(G)a_l$ as left $\UO(G)$-modules.
        \end{proof}

        For a non-negative integer $k$, we define the {\it $k$th $L^2$-Betti number} of $G$ as 
        \[
        \dim_{\scalebox{0.8}{$\VN(G)$}}\left(\Tor_k^{\scalebox{0.8}{$\C[G]$}}(\VN(G),\C)\right)
        \]
        where $G$ acts trivially on $\C$. We can also replace $\VN(G)$ by $\UO(G)$ by flatness arguments. Not only that, since for any subgroup $H$ of $G$ it holds that (see \cite[Proposition 4.2.2]{DLopezAlvarezThesis})
        \begin{equation}\label{eqtn: preserve_dim_gps}
            \dim_{\scalebox{0.8}{$\UO(H)$}}(M)=\dim_{\scalebox{0.8}{$\UO(G)$}}(\UO(G)\otimes_{\scalebox{0.8}{$\UO(H)$}}M)
        \end{equation}
        for every left $\UO(H)$-module $M$, then applying Shapiro's lemma we get:
        
        \begin{lem}\label{lem: beta_H_as_dimG}
            Let $G$ be a group and $H$ a subgroup in $G$. Then for every non-negative integer $k$ it holds that $\dim_{\scalebox{0.8}{$\UO(G)$}}(\Tor_k^{\scalebox{0.8}{$\C[G]$}}(\UO(G),\C[G]\otimes_{\scalebox{0.8}{$\C[H]$}}\C))=\beta_k^{(2)}(H).$
        \end{lem}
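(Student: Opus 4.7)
The plan is a two-step reduction: first invoke Shapiro's lemma to move from $\C[G]$ down to $\C[H]$, then perform a scalar extension from $\UO(H)$ to $\UO(G)$ and apply the dimension preservation formula \eqref{eqtn: preserve_dim_gps}.

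First, since $\C[G]$ is free, and in particular flat, as a right $\C[H]$-module, Shapiro's lemma applied with $R=\C[H]$, $S=\C[G]$, right $S$-module $A=\UO(G)$, and left $R$-module $B=\C$ provides a canonical isomorphism of left $\UO(G)$-modules
\[
\Tor_k^{\C[G]}(\UO(G),\C[G]\otimes_{\C[H]}\C)\;\cong\;\Tor_k^{\C[H]}(\UO(G),\C).
\]
So it is enough to show that the $\UO(G)$-dimension of the right-hand side equals $\beta_k^{(2)}(H)$.

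Second, I would fix a projective resolution $P_\bullet\to\C$ of $\C$ over $\C[H]$ and observe the factorisation
\[
\UO(G)\otimes_{\C[H]}P_\bullet \;=\; \UO(G)\otimes_{\UO(H)}\bigl(\UO(H)\otimes_{\C[H]}P_\bullet\bigr).
\]
The key technical input is that $\UO(G)$ is flat as a right $\UO(H)$-module: this holds because $\UO(H)$ is von Neumann regular (it is the classical ring of fractions of the finite von Neumann algebra $\VN(H)$ at its non-zero divisors), and every module over a von Neumann regular ring is flat. Flatness then lets me pull $\UO(G)\otimes_{\UO(H)}-$ through homology, giving
\[
\Tor_k^{\C[H]}(\UO(G),\C)\;\cong\; \UO(G)\otimes_{\UO(H)}\Tor_k^{\C[H]}(\UO(H),\C).
\]

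Third, applying \eqref{eqtn: preserve_dim_gps} to the left $\UO(H)$-module $\Tor_k^{\C[H]}(\UO(H),\C)$ yields
\[
\dim_{\UO(G)}\!\bigl(\UO(G)\otimes_{\UO(H)}\Tor_k^{\C[H]}(\UO(H),\C)\bigr) \;=\; \dim_{\UO(H)}\!\bigl(\Tor_k^{\C[H]}(\UO(H),\C)\bigr),
\]
and the right-hand side is $\beta_k^{(2)}(H)$: by faithful flatness of $\VN(H)\hookrightarrow \UO(H)$ (Ore localisation on the right) one has $\UO(H)\otimes_{\VN(H)}\Tor_k^{\C[H]}(\VN(H),\C)\cong \Tor_k^{\C[H]}(\UO(H),\C)$, and \cref{lem: dim_extension_for_finite_vNa} ensures that the $\VN(H)$- and $\UO(H)$-dimensions agree. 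Concatenating the three displayed isomorphisms closes the argument. The only substantive subtlety is the flatness of $\UO(G)$ over $\UO(H)$; everything else is formal homological bookkeeping.
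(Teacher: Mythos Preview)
Your proof is correct and follows exactly the approach the paper sketches: the paper simply says ``applying Shapiro's lemma'' after recording \eqref{eqtn: preserve_dim_gps}, and you have filled in the details of that two-step reduction (Shapiro to pass from $\C[G]$ to $\C[H]$, then flatness of $\UO(G)$ over the regular ring $\UO(H)$ together with \eqref{eqtn: preserve_dim_gps}). One minor point: your citation of \cref{lem: dim_extension_for_finite_vNa} at the very end is slightly off, since $\UO(H)$ is not a finite von Neumann algebra; the equality $\dim_{\UO(H)}\Tor_k^{\C[H]}(\UO(H),\C)=\beta_k^{(2)}(H)$ is what the paper already records just before \eqref{eqtn: preserve_dim_gps} (``we can also replace $\VN(G)$ by $\UO(G)$ by flatness arguments''), so you can simply invoke that remark instead.
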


    \subsection{Algebraic structure of discrete measured groupoids}\label{subsect: alg_struct_gpds}

        \begin{defn}
            A {\it groupoid} is a set $\gpd$ with a distinguished subset $\gpd^0$, range and source maps $r,s:\gpd\rightarrow \gpd^0$, a composition map $(\alpha,\beta)\mapsto \alpha\beta$ from $\{(\alpha,\beta)\in \gpd\times \gpd: s(\alpha)=r(\beta)\}$ to $\gpd$ and an inverse map $\gamma\mapsto \gamma^{-1}$ from $\gpd$ to $\gpd$ with the following properties:
		\begin{enumerate}[label=(\roman*)]
			\item $r(x)=x=s(x)$ for all $x\in \gpd^0$;
			\item $r(\gamma)\gamma=\gamma=\gamma s(\gamma)$ for all $\gamma\in \gpd$;
                \item $r(\gamma^{-1})=s(\gamma)$ and $s(\gamma^{-1})=r(\gamma)$ for all $\gamma\in \gpd$;
                \item $\gamma^{-1}\gamma=s(\gamma)$ and $\gamma\gamma^{-1}=r(\gamma)$ for all $\gamma\in \gpd$;
                \item $r(\alpha\beta)=r(\alpha)$ and $s(\alpha\beta)=s(\beta)$ whenever $s(\alpha)=r(\beta)$; and
                \item $(\alpha\beta)\gamma=\alpha(\beta\gamma)$ whenever $s(\alpha)=r(\beta)$ and $s(\beta)=r(\gamma)$.
		\end{enumerate}
	\end{defn}

        Let $(X,\mu)$ be a probability space with probability measure $\mu$ and let $G\times X\rightarrow X$ be a m.p. group action. We denote by $(X\rtimes G,\mu)$ or simply $X\rtimes G$ the {\it translation groupoid}, that is, the groupoid with set $X\times G$, distinguished subset $X\times \{1\}$, source map $s$ induced by $\pi_X$, range map $r$ defined via the action of $G$ on $X$, composition map given by $(x,g)(y,h):=(y,gh)$ provided that $(x,1)=(h\cdot y,1)$, and inverse map which sends $(x,g)$ to $(g\cdot x, g^{-1})$.

        Throughout sections \ref{sec: hom_features_gpd} and \ref{sec: new_vanishing_crit} the group $G$ will have an infinite index subgroup $H$ and the translation groupoid will be with respect to the probability space $X_{G,H}:=\prod_{gH\in G/H}[0,1]$ with measure the product of the Lebesgue measure on each factor and $G$ acting by left multiplication on the left cosets of $H$ in $G$.

        A {\it discrete measurable groupoid} $\gpd$ is a groupoid $\gpd$ equipped with the structure of a standard Borel space such that the composition and the inverse map are Borel and $s^{-1}(\{x\})$ is countable for all $x\in \gpd^0$.

        Given a probability measure $\mu$ on the distinguished subset $\gpd^0$ of a discrete measurable groupoid $\gpd$, for any measurable subset $A\subseteq \gpd$, the function $\gpd^0\rightarrow \mathbb{C}, x\mapsto \# (s^{-1}(\{x\})\cap A)$ is measurable and the measure $\mu_s$ on $\gpd$ defined by
        \[
        \mu_s(A)=\int_{\gpd^0}\#(s^{-1}(\{x\})\cap A)d\mu(x)
        \]
        is $\sigma$-finite (see \cite[Lemma 1.7]{SauerThesis}). The analogue statement holds if we replace $s$ by $r$.

        A discrete measurable groupoid $\gpd$ together with a probability measure on $\gpd^0$ satisfying $\mu_s=\mu_r$ is called {\it discrete measured groupoid}. 
        
        The translation groupoid $X\rtimes G$ is indeed a discrete measured groupoid provided that $G$ is countable.

        A {\it subgroupoid} $\Hgpd$ of a discrete measured groupoid $\gpd$ is a measurable subset of $\gpd$ having the groupoid structure with same distinguished subset $\gpd^0$ and same range, source, composition and inverse maps. Note that in this case $\Hgpd$ is automatically a discrete measured groupoid.         
        
        A discrete measured groupoid $(\gpd,\mu)$ is said to be {\it ergodic} if one of the following equivalent conditions hold.
	\begin{enumerate}[label=(\roman*)]
            \item Any measurable function $f:\gpd^0\rightarrow \mathbb{R}$ that is $\gpd$-invariant, i.e. $f\circ s=f\circ r$, is $\mu$-a.e. constant.
            \item For any Borel subset $A\subseteq \gpd^0$ of positive measure, the so-called {\it saturation} $A^{\gpd}:=\{x\in\gpd^0: \exists\gamma\in \gpd{\mbox{ with }s(\gamma)\in A,r(\gamma)=x}\}$ has full measure.
	\end{enumerate}
  
        Note that if the countable group $G$ has an infinite index subgroup $H$ then $X_{G,H}\rtimes G$ is ergodic. Indeed, if $A\subseteq \gpd^0$ has positive measure, then there are only finitely many components different from $[0,1]$ $\mu$-a.e. and the action of $G$ is transitive.

        By means of this ergodic notion it is possible to define a subgroup index analogue in the context of discrete measured groupoids. In fact, first observe that every groupoid induces an equivalence relation on its distinguished subset:
        \[
        x\sim y\Leftrightarrow \exists\gamma\in \gpd \mbox{ such that } s(\gamma)=x \mbox{ and } r(\gamma)=y, \mbox{ for } x,y\in \gpd^0.
        \]
        Similarly, if $\Hgpd$ is a subgroupoid of $\gpd$ then it induces a subrelation on the same distinguished subset. In particular, it holds that the $\Hgpd$-orbit $x^{\Hgpd}$ is contained in the $\gpd$-orbit $x^{\gpd}$ for every $x\in \gpd^0$. So if we let $J(x)$ denote the quotient $x^{\gpd}/x^{\Hgpd}$, then $\#J(x)$ is constant $\mu$-a.e. (cf \cite[Lemma 1.1]{FeldShutZimm}). This value will be called the {\it index} of $\Hgpd$ in $\gpd$. Note that we also allow infinite indices.

        The next result shall play a central role in our discussion. We recall that a {\it measure isomorphism} $f:(X,\mu_X)\rightarrow (Y,\mu_Y)$ is a measure preserving Borel map with the property that there are Borel subsets $A\subseteq X$ and $B\subseteq Y$ with $\mu_X(X\setminus A)=\mu_Y(Y\setminus B)=0$ such that $f\vrule_A$ is a Borel isomorphism, i.e. a bijective Borel map.
        
        \begin{lem}{\cite[Lemma 3.8]{SauerThom}}\label{lem: section_maps_for_gpds}
            Let $\gpd$ be an ergodic discrete measured groupoid. Let $E,F\subseteq \gpd^0$ be Borel subsets with the same measure. Then there is a Borel map $\varphi:\gpd^0\rightarrow \gpd$ such that 
			\begin{enumerate}[label=(\roman*)]
                    \item $s(\varphi(x))=x$ $\mu$-a.e. and $x\mapsto r(\varphi(x))$ is $\mu$-a.e. injective for $x\in \gpd^0$; and
                    \item $r(\varphi(E))\subseteq F$ and $r\circ \varphi:E\rightarrow F$ is a measure isomorphism.
			\end{enumerate}
        \end{lem}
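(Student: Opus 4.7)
The plan is to reduce the lemma to a matching statement: for any two Borel subsets $A,B\subseteq \gpd^0$ with $\mu(A)=\mu(B)$, there exists a Borel map $\psi:A\to \gpd$ with $s\circ\psi=\id_A$, with $r\circ\psi$ injective, and with $r(\psi(A))=B$ modulo null sets. Granting this, I would apply the matching statement twice---once to $(E,F)$ producing $\psi_1$, and once to $(\gpd^0\setminus E,\gpd^0\setminus F)$ (which have equal measure because $\mu$ is a probability measure) producing $\psi_2$---and then set $\varphi=\psi_1$ on $E$ and $\varphi=\psi_2$ on $\gpd^0\setminus E$. This yields a Borel section of $s$ whose range-composition is $\mu$-a.e.\ injective, since the images of $r\circ\psi_1$ and $r\circ\psi_2$ lie in the disjoint sets $F$ and $\gpd^0\setminus F$, and both (i) and (ii) follow immediately.

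For the matching statement I would invoke the Lusin--Novikov selection theorem: because $s$ is countable-to-one, $\gpd$ admits a countable Borel partition into \emph{bisections} $\gpd_n$ on which both $s$ and $r$ are injective. Each such bisection encodes a Borel partial isomorphism $\phi_n:=r\circ (s|_{\gpd_n})^{-1}:U_n\to V_n$ between Borel subsets of $\gpd^0$, and the identity $\mu_s=\mu_r$ from the definition of a discrete measured groupoid forces each $\phi_n$ to preserve $\mu$. I would then perform a greedy exhaustion: starting with $A_0=B_0=\emptyset$, at step $n$ set
\[
D_n=(A\setminus A_{n-1})\cap U_n\cap \phi_n^{-1}(B\setminus B_{n-1}),\qquad A_n=A_{n-1}\cup D_n,\qquad B_n=B_{n-1}\cup \phi_n(D_n),
\]
and define $\psi(x)=(s|_{\gpd_n})^{-1}(x)$ for $x\in D_n$. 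By construction $\psi$ is Borel, satisfies $s\circ\psi=\id$, and has $r\circ\psi$ injective with image inside $B$.

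The main obstacle is showing that the residual set $A\setminus A_\infty$, where $A_\infty=\bigcup_n A_n$, is null. A single sweep through a fixed enumeration of bisections may leave a positive-measure leftover, so one must either reiterate the enumeration (revisiting each bisection after each enlargement of $A_n,B_n$) or invoke Zorn on the family of Borel partial matchings between subsets of $A$ and $B$ of equal measure. At a maximal stage, suppose for contradiction that $\mu(A\setminus A_\infty)>0$; since each $\phi_n$ preserves $\mu$, one also has $\mu(B\setminus B_\infty)>0$. Ergodicity, in the form of condition (ii) of its definition, applied to the positive-measure set $A\setminus A_\infty$, forces its saturation to have full measure and therefore to meet $B\setminus B_\infty$; the connecting arrow $\gamma$ lies in some bisection $\gpd_n$ and, on a positive-measure neighbourhood, provides an extension of $\psi$ that the greedy step should already have absorbed, contradicting maximality. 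Once this is secured, the measure-isomorphism claim in~(ii) is automatic: $r\circ\varphi|_E$ is a measure-preserving Borel bijection between full-measure pieces of $E$ and $F$, and the two sets have equal total mass.
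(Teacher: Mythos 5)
The paper does not prove this lemma at all---it is imported verbatim as \cite[Lemma 3.8]{SauerThom}---so there is no in-paper argument to compare against; your proposal has to be judged on its own. It is essentially correct, and it is the standard (Dye-style) argument that one would expect behind the citation: decompose $\gpd$ into countably many bisections via Lusin--Novikov, observe that $\mu_s=\mu_r$ makes each induced partial isomorphism $\phi_n$ measure-preserving, exhaust a matching between $A$ and $B$, and use ergodicity to kill the residual set; the reduction to the matching statement and the gluing over $E$ and $\gpd^0\setminus E$ are both fine. The one step you should not leave in its current hedged form is the termination of the exhaustion. Zorn's lemma is genuinely problematic here, since an uncountable chain of Borel partial matchings need not have a Borel upper bound. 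The clean fix is the re-iteration you mention: sweep through an enumeration of $\mathbb{N}$ in which every bisection recurs infinitely often. If the residual set $A\setminus A_\infty$ had positive measure, then (since $\mu(A_\infty)=\mu(B_\infty)$ by measure preservation, hence $\mu(B\setminus B_\infty)>0$) ergodicity plus countability of the bisections yields an index $n$ with $\mu\bigl((A\setminus A_\infty)\cap U_n\cap\phi_n^{-1}(B\setminus B_\infty)\bigr)=\delta>0$; every one of the infinitely many visits to $\gpd_n$ then contributes a disjoint piece $D_k$ of measure at least $\delta$, contradicting $\sum_k\mu(D_k)\leq\mu(A)\leq 1$. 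With that substitution the proof is complete, and the final measure-isomorphism claim follows as you say, since $r\circ\varphi|_E$ is a measure-preserving Borel injection of $E$ onto a conull subset of $F$.
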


        Let $n$ be a positive integer; for $i=1,\ldots, n$ we set the following subsets of $X_{G,H}\rtimes G$
        \[
        Y_i=\left(\left[\frac{i-1}{n},\frac{i}{n}\right]\times \prod_{\substack{gH\in G/H\\ gH\neq H}}[0,1]\right)\times\{1\}.
        \]
        We can now define as in \cite[Lemma 6.2]{PetersonThom_Freiheit} the subgroupoid $\gpd_n\subseteq X_{G,H}\rtimes G$ which consists of those elements in $\gpd$ that preserve the partition, that is
        \[
        \gpd_n=\{\gamma\in X_{G,H}\rtimes G: \mbox{ both } s(\gamma),r(\gamma)\in Y_i \mbox{ for some }i\in \{1,\ldots, n\}\}.
        \]
        From \cref{lem: section_maps_for_gpds}, it follows that $\gpd_n$ has index $n$ in $\gpd$. Hence, in contrast to the rigidity of groups, we can create finite index subgroupoids of arbitrarily large index.

        Moreover, discrete measured groupoids carry a natural algebraic structure. Before introducing the group algebra analogue, we shall define two associated functions. For a map $\phi:\gpd\rightarrow \mathbb{C}$ and $x\in \gpd^0$ we set
	\begin{align*}
            &S(\phi)(x):=\#\{\gamma\in \gpd: \phi(\gamma)\neq 0, s(\gamma)=x\}\in \mathbb{Z}_{\geq 0}\cup\{\infty\};\\
            &R(\phi)(x):=\#\{\gamma\in \gpd: \phi(\gamma)\neq 0, r(\gamma)=x\}\in \mathbb{Z}_{\geq 0}\cup\{\infty\}.
	\end{align*} 
        As it is customary, the set of complex-valued, measurable, essentially bounded functions up to zero measure on $\gpd$ with respect to $\mu$ is denoted by $L^{\infty}(\gpd)$.
        
	\begin{defn}
		The {\it groupoid ring} $\C[\gpd]$ of a discrete measured groupoid $\gpd$ is defined as
            \[
            \C[\gpd]=\{\phi\in L^{\infty}(\gpd): S(\phi),R(\phi)\in L^{\infty}(\gpd^0)\}.
            \]
	\end{defn}

        The set $\C[\gpd]$ is a ring with involution where addition is pointwise addition in $L^{\infty}(\gpd)$, multiplication is given by the convolution product
        \[
        (\phi\eta)(\gamma)=\sum_{\substack{\gamma_1,\gamma_2\in\gpd \\ \gamma_2\gamma_1=\gamma}}\phi(\gamma_1)\eta(\gamma_2)
        \]
        and the involution is defined by $(\phi^{\ast})(\gamma)=\overline{\phi(\gamma^{-1})}$ (cf. \cite[Lemma 1.20]{SauerThesis}). Observe that this ring contains $L^{\infty}(\gpd^0)$ as a subring. Indeed, $L^{\infty}(\gpd^0)\hookrightarrow \C[\gpd]$ via $f\mapsto f\chi_{\gpd^0}$ which respects the convolution product. Not only that, $L^{\infty}(\gpd^0)$ can be equipped with a left $\C[\gpd]$-module structure. Concretely, the {\it augmentation homomorphism} $\epsilon:\C[\gpd]\rightarrow L^{\infty}(\gpd^0)$ is defined as
        \[
        \epsilon(\phi)(x):=\sum_{\gamma\in s^{-1}(\{x\})}\phi(\gamma).
        \]
        Now set $\eta\cdot f:=\epsilon(\eta f)$ for $\eta\in \C[\gpd],f\in L^{\infty}(\gpd^0)$ where $\eta f$ is the convolution product in $\C[\gpd]$ (see \cite[Lemma 1.22]{SauerThesis}). 
        
        We record a useful result for future reference that also gives us a better insight of the form of the objects in the groupoid ring.

        \begin{lem}{\cite[Lemma 3.3]{SauerGroupoids}}\label{lem: gpd_ring_are_finite_sums}
            As a left $\essbdd$-module the groupoid ring $\C[\gpd]$ is generated by the characteristic functions $\chi_E$ of Borel subsets $E\subseteq \gpd$ with the property that $s\vrule_E$ and $r\vrule_E$ are injective.
        \end{lem}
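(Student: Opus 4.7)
The plan is to take an arbitrary $\phi \in \C[\gpd]$, partition a full-measure representative of $\supp(\phi)$ into finitely many Borel pieces on which both $s$ and $r$ restrict to injections, and then realize $\phi$ on each piece as a single left $\essbdd$-multiple of the corresponding indicator function.

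For the partitioning step, since $\phi \in L^\infty(\gpd)$ and by hypothesis $S(\phi), R(\phi) \in \essbdd$, there exists an integer $N$ such that, after discarding a $\mu_s$- and $\mu_r$-null set, the set $E := \supp(\phi)$ satisfies $\#(s^{-1}(x) \cap E) \leq N$ and $\#(r^{-1}(x) \cap E) \leq N$ for every $x \in \gpd^0$. I would then invoke the Lusin--Novikov selection theorem applied to $s|_E$: since this is a Borel map between standard Borel spaces whose fibers are finite of uniformly bounded size, an iterated measurable selection produces a Borel partition $E = E^1 \sqcup \cdots \sqcup E^N$ with each $s|_{E^i}$ injective. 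Applying the same principle to each $r|_{E^i}$ and refining yields a finite Borel partition $E = \bigsqcup_j E_j$ on which both $s$ and $r$ are injective.

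With the partition in hand, I would realize each $\phi|_{E_j}$ as an $\essbdd$-multiple of $\chi_{E_j}$ as follows. By the Lusin--Souslin theorem, the injective Borel map $s|_{E_j}$ is a Borel isomorphism onto its (Borel) image $s(E_j) \subseteq \gpd^0$. Define $f_j \in \essbdd$ by $f_j(x) := \phi\bigl((s|_{E_j})^{-1}(x)\bigr)$ for $x \in s(E_j)$ and zero otherwise; this is Borel measurable and essentially bounded by $\|\phi\|_\infty$. A direct computation from the convolution formula, using that for $\gamma_1 \in \gpd^0$ the relation $\gamma_2\gamma_1 = \gamma$ forces $\gamma_2 = \gamma$ and $\gamma_1 = s(\gamma)$, shows that
\[
\bigl((f_j \chi_{\gpd^0})\, \chi_{E_j}\bigr)(\gamma) \;=\; f_j(s(\gamma))\, \chi_{E_j}(\gamma),
\]
which equals $\phi|_{E_j}(\gamma)$ by construction of $f_j$. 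Summing over the finite partition exhibits $\phi$ as a finite left $\essbdd$-linear combination of characteristic functions of the required type.

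The main obstacle is the measurable selection step, which is a piece of descriptive set theory rather than a formal manipulation of the convolution; however, the uniform fiber bound $N$ reduces it to a bounded iteration of the standard Lusin--Novikov uniformization, so once the selection is set up carefully the remainder of the argument is routine manipulation of the groupoid ring structure.
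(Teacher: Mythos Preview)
Your argument is correct and follows the standard route: bound the fiber cardinalities using $S(\phi),R(\phi)\in L^\infty(\gpd^0)$, apply Lusin--Novikov iteratively to obtain a finite Borel partition of the support into graphs of partial injections, and then push $\phi$ forward along $s|_{E_j}^{-1}$ to produce the coefficient $f_j$. Your convolution computation is consistent with the paper's convention $(\phi\eta)(\gamma)=\sum_{\gamma_2\gamma_1=\gamma}\phi(\gamma_1)\eta(\gamma_2)$, which indeed gives $(f\chi_{\gpd^0}\cdot\chi_{E_j})(\gamma)=f(s(\gamma))\chi_{E_j}(\gamma)$.

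Note that the paper does not supply its own proof of this lemma: it is quoted verbatim from \cite[Lemma~3.3]{SauerGroupoids} and used as a black box (for instance in the proof of \cref{lem: trivial_action}). Your argument is essentially the one Sauer gives, so there is nothing to contrast.
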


        There is also an operator algebra structure involved. Indeed, from the proof of \cite[Theorem 1.50]{SauerThesis} it follows that $\C[\gpd]$ acts on $L^2(\gpd,\mu)$ by left and right. This way taking the right regular action, we consider $\C[\gpd]$ as a subalgebra of the bounded operators $\mathcal{B}(L^2(\gpd,\mu))$. This leads to define the {\it groupoid von Neumann algebra} $\VN(\gpd)$ of $\gpd$ as the algebra of left $\gpd$-equivariant bounded operators on $L^2(\gpd,\mu)$
        \[
        \VN(\gpd):=\{\Psi\in \mathcal{B}(L^2(\gpd,\mu)): \Psi(\phi\cdot \eta)=\phi\cdot\Psi(\eta) \mbox{ for all }\eta\in L^2(\gpd,\mu), \phi\in \C[\gpd]\}.
        \]
        \vspace*{-7mm}
        \begin{rem}
            This definition coincides with \cite[Definition 1.51]{SauerThesis}. Indeed, the proof of \cite[Theorem 2.24]{Kamm_l2invt} can be carried out line by line by means of \cite[theorem 1 on p.80]{Dixmier} and \cite[Theorem 2.19]{Kamm_l2invt}.
        \end{rem}

        The von Neumann algebra $\VN(\gpd)$ has a finite trace $\trace_{\scalebox{0.8}{$\VN(\gpd)$}}$, induced by the invariant measure $\mu$
        \[
        \trace_{\scalebox{0.8}{$\VN(\gpd)$}}(\phi)=\int_{\gpd^0}\phi(x)d\mu(x)
        \]
        for $\phi\in \C[\gpd]\subseteq \VN(\gpd)$. In more detail, $\C[\gpd]$ has an inner product given by
        \[
        \langle \phi,\eta\rangle=\int_{\gpd}\phi(\gamma)\overline{\eta(\gamma)}d\mu(\gamma);
        \]
        and the trace function on $\VN(\gpd)$ is also computed as $\langle \psi(1),1\rangle$ for $\psi\in \VN(\gpd)$ and $1$ the identity function, where the inner product is taken in the Hilbert space completion of $(\C[\gpd],\langle , \rangle)$ (see \cite[Theorem 1.46 \& Theorem 1.50]{SauerThesis} for further details).
        
        This trace function endows a Sylvester matrix rank function on $\VN(\gpd)$ as usual (cf. \cite[Section 2.4]{JiangLi_Amenable}). This induces a SMRF on finitely presented left $\VN(\gpd)$-modules, which we extend to a BSMRF over arbitrary modules according to \cref{thm: SMRF_to_BSMRF}. Thus we can now define our main objects of interest.
	\begin{defn}
            Let $\gpd$ be a discrete measured groupoid, for every non-negative integer $k$ we define the {\it $k$th $L^2$-Betti number} of $\gpd$ as
            \[
            \dim_{\scalebox{0.8}{$\VN(\gpd)$}}\left(\Tor_k^{\scalebox{0.8}{$\C[\gpd]$}}(\VN(\gpd),L^{\infty}(\gpd^0))\right).
            \]
	\end{defn}

        A ring $S$ is $G$-{\it graded} if $S = \bigoplus_{g\in G} S_g$ as an additive group, where $S_g$ is an additive subgroup for every $g\in G$, and $S_g S_h \subseteq S_{gh}$ for all $g,h \in G$. If $S_g$ contains an invertible element $u_g$ for each $g\in G$, then we say that $S$ is a \textit{crossed product} of $S_e$ and $G$ and we shall denote it by $S=S_e * G$. Note that the usual group ring $R[G]$ of a group $G$ with $R$ a ring is an example of a crossed product.

        In the case of the translation groupoid there will be another ring that comes into play. The {\it crossed product ring} $L^{\infty}(\gpd^0)\ast G$ is the free left $L^{\infty}(\gpd^0)$-module with basis $G$, where the multiplication is determined by the rule $g\cdot f(\_)=f(g^{-1}\cdot\_)g$ for $g\in G$ and $f\in L^{\infty}(\gpd^0)$.
	\begin{rem}
            The crossed product ring embeds into $\C[X\rtimes G]$ via $\sum f_gg\mapsto f\in L^{\infty}(\gpd)$ with $f(x,g^{-1})=f_g(x)$. Furthermore, $\phi\in \C[X\rtimes G]$ is in the image if and only if there is a finite subset $F\subseteq G$ such that $g\in G\setminus F$ implies that $\phi(x,g)=0$ $\mu$-a.e. (this follows from the proof of \cite[Lemma 1.26]{SauerThesis}).
	\end{rem}

        There is a natural relation between the group algebra of $G$ and that of its translation groupoid (the proof of \cite[Corollary 1.54]{SauerThesis} remains valid in this setting).

        \begin{prop}\label{prop: VN_gpd_is_trace_preserving}
            The $*$-homomorphism $\C[G]\rightarrow \C[X\rtimes G]$ extends to a trace-preserving $*$-homomorphism $\VN(G)\rightarrow \VN(X\rtimes G)$.
        \end{prop}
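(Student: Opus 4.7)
The plan is to mimic the proof of \cite[Corollary 1.54]{SauerThesis} referenced in the excerpt. At the algebra level, the crossed-product embedding $\essbdd\ast G\hookrightarrow \C[X\rtimes G]$ described in the remark preceding the proposition restricts to a map $\iota\colon \C[G]\to \C[X\rtimes G]$ sending $g\in G$ to $\chi_{X\times\{g^{-1}\}}$; direct computation from the convolution and involution formulas shows $\iota$ is a $*$-algebra homomorphism. For trace-preservation at this level, given $g\in G$ we have
\[
\trace_{\VN(X\rtimes G)}(\iota(g))=\int_{\gpd^0}\chi_{X\times\{g^{-1}\}}(x,e)\,d\mu(x)=\mu(X)\,\delta_{g,e}=\delta_{g,e}=\trace_{\VN(G)}(g),
\]
since $\gpd^0=X\times\{e\}$ and $\mu$ is a probability measure; linearity extends this to all of $\C[G]$.

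To extend $\iota$ to the von Neumann algebras, I would note that trace-preservation yields a linear isometry $U\colon \ell^2(G)\hookrightarrow L^2(\gpd,\mu)$ between the GNS completions, given on the dense subspace $\C[G]$ by $g\mapsto \chi_{X\times\{g^{-1}\}}$. The isometry $U$ intertwines both the left and right regular representations of $\C[G]$ with the corresponding $\iota$-images acting on $L^2(\gpd,\mu)$. By Dixmier's bicommutant theorem together with the identification recalled in the remark immediately before the proposition, both $\VN(G)$ and $\VN(X\rtimes G)$ coincide with the double commutants of $\C[G]$ and $\C[X\rtimes G]$ in $\mathcal{B}(\ell^2(G))$ and $\mathcal{B}(L^2(\gpd,\mu))$ respectively. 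The standard GNS-extension principle for trace-preserving $*$-homomorphisms of tracial $*$-algebras then produces a normal $*$-homomorphism $\VN(G)\to \VN(X\rtimes G)$ extending $\iota$, with trace-preservation following by continuity from its validity on $\C[G]$.

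The delicate point is this extension step: one must verify that the operator obtained by conjugating the action of $a\in \VN(G)$ by the isometry $U$, initially defined only on $U(\ell^2(G))\subseteq L^2(\gpd,\mu)$, extends to a bounded operator on the full space that commutes with the right-regular action of the \emph{entire} groupoid algebra $\C[X\rtimes G]$, and not merely with $\iota(\C[G])$. This rests on the same bicommutant-type argument used in \cite[Theorem 2.24]{Kamm_l2invt}, which, as the remark before the proposition observes, carries over verbatim to the groupoid setting via \cite[theorem 1 on p.80]{Dixmier} and \cite[Theorem 2.19]{Kamm_l2invt}.
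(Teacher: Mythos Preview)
The paper does not supply its own proof but simply remarks that the argument of \cite[Corollary 1.54]{SauerThesis} goes through verbatim; your proposal is precisely a sketch of that argument (GNS isometry from trace preservation on $\C[G]$, then extension to the von Neumann closures via bicommutant/Kaplansky density), so the approaches coincide. Your identification of the delicate extension step is apt, and the references you cite are exactly those the paper invokes in the remark preceding the proposition.
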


\section{Homological Algebra and Dimension Theory} \label{sec: hom_alg_dim_theory}
		
    In this section we follow the approach developed by Sauer in \cite[Section 4]{SauerGroupoids}. Mainly, we extend its work to exact SMRF and give a strengthen version of \cite[Theorem 4.11]{SauerGroupoids} (see \cref{thm: dim_isomph_changing_scalars}). We begin with a convenient lemma.

    \begin{lem}\label{lem: dim_directed_union}
        Let $R$ be a ring with SMRF $\dim_R$ and $M$ a left $R$-module. If $M=\cup_{i\in I}M_i$ is a directed union of left $R$-submodules, then
        \[
        \dim_R(M)\leq \sup_{i\in I}\dim_R(M_i).
        \]
    \end{lem}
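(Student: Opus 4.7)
The plan is to use the BSMRF extension given by \cref{thm: SMRF_to_BSMRF} together with the continuity and monotonicity properties from \cref{defn: BSMRF} and \cref{lem: continuity_dim}. Unwinding the normalization $\dim_R(M) = \dim_R(M\mbox{\textbar}M)$ and the continuity axiom (iv) of \cref{defn: BSMRF}, we have
\[
\dim_R(M) \;=\; \sup_{M'} \dim_R(M'\mbox{\textbar}M),
\]
where $M'$ ranges over the finitely generated $R$-submodules of $M$.

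Next, I would use that $M=\bigcup_{i\in I}M_i$ is a \emph{directed} union, so any finitely generated submodule $M'\subseteq M$ is already contained in some $M_i$. This is the only place where directedness enters, but it is the key point: it lets me replace the comparison against the possibly huge module $M$ by a comparison against one of the $M_i$.

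Then I would apply the monotonicity assertions of \cref{lem: continuity_dim}: since $M'\subseteq M_i\subseteq M$, part (ii) (decreasing in the second argument) yields $\dim_R(M'\mbox{\textbar}M)\leq \dim_R(M'\mbox{\textbar}M_i)$, and part (i) (increasing in the first argument) together with normalization yields $\dim_R(M'\mbox{\textbar}M_i)\leq \dim_R(M_i\mbox{\textbar}M_i) = \dim_R(M_i)$. Combining these inequalities gives $\dim_R(M'\mbox{\textbar}M)\leq \sup_{i\in I}\dim_R(M_i)$, and taking the supremum over $M'$ completes the proof.

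I do not expect any real obstacle here; the argument is a direct bookkeeping of the BSMRF axioms, and the result is essentially the standard upper semicontinuity of dimension under directed unions. The only subtlety worth flagging is that the inequality can be strict in general (exactness of $\dim_R$ is not assumed), which is why the statement is formulated as an inequality rather than an equality.
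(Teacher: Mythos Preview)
Your proof is correct and follows essentially the same route as the paper: both expand $\dim_R(M)$ via the continuity axiom as a supremum over finitely generated submodules $M'$, use directedness to find some $M_i\supseteq M'$, and then invoke \cref{lem: continuity_dim} to obtain $\dim_R(M'\mbox{\textbar}M)\leq \dim_R(M'\mbox{\textbar}M_i)\leq \dim_R(M_i)$.
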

    
    \begin{proof}
        First recall that by \cref{defn: BSMRF} $\dim_R(M)=\sup_{M'}\dim_R(M'$\textbar$ M)$ where $M'$ ranges over the finitely generated left $R$-submodules of $M$. Choose one such $M'$; then, since it is finitely generated, there is some $i_0\in I$ satisfying $M'\subseteq M_{i_0}$. Thus according to \cref{lem: continuity_dim}
        \[
        \dim_R(M'\mbox{\textbar} M)\leq \dim_R(M'\mbox{\textbar} M_{i_0})\leq \dim_R (M_{i_0})\leq \sup_{i\in I}\dim_R (M_i).
        \]
        Since $M'$ was arbitrary, this proves the lemma.
    \end{proof}
    
    We can use the above result to show that bimodules with vanishing dimension still have dimension zero when taking tensor product on the right hand side.
    
    \begin{lem}\label{lem: dim_zero_extends_by_right}
        Let $T$ be a ring with SMRF $\dim_T$ and $L$ a $T$-$R$-bimodule with $\dim_T(L)=0$. Then for every left $R$-module $M$ it holds that
        \[
        \dim_T (L\otimes_RM)=0.
        \]
    \end{lem}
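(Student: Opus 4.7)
\medskip

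My plan is to reduce the problem to the finitely generated case for $M$ using \cref{lem: dim_directed_union}, then further reduce to the free case $M = R^k$, where the conclusion falls out of the direct-sum axiom of the SMRF. The only potential subtlety is making sure the relevant monotonicity properties of the BSMRF extension behave the way we want on arbitrary (not necessarily finitely presented) modules.

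First I would handle the finitely generated case. Given any finitely generated $R$-submodule $M' \subseteq M$, pick a surjection $R^k \twoheadrightarrow M'$ for some $k \in \N$. Tensoring with the $T$-$R$-bimodule $L$ preserves surjections, producing a surjection of left $T$-modules
\[
L^k \cong L \otimes_R R^k \twoheadrightarrow L \otimes_R M'.
\]
By the direct-sum axiom of $\dim_T$ we have $\dim_T(L^k) = k\dim_T(L) = 0$, and since quotients have no larger dimension (this is precisely the inequality $\dim(M_2) \geq \dim(M_3)$ in condition (iii) of \cref{defn: SMRF}, transported to the BSMRF via the additivity axiom (vi) of \cref{defn: BSMRF}), we conclude $\dim_T(L \otimes_R M') = 0$.

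Next I would write $M$ as the directed union of its finitely generated $R$-submodules $M'$. Since tensor product commutes with direct limits, the map $L \otimes_R M' \to L \otimes_R M$ gives that $L \otimes_R M$ is the directed union of the images $\im(L \otimes_R M' \to L \otimes_R M)$ as $M'$ ranges over finitely generated submodules of $M$. Each such image is a quotient of $L \otimes_R M'$, hence again has dimension zero by the quotient bound. Applying \cref{lem: dim_directed_union} to this directed union then gives
\[
\dim_T(L \otimes_R M) \leq \sup_{M'} \dim_T\bigl(\im(L \otimes_R M' \to L \otimes_R M)\bigr) = 0,
\]
which is the desired conclusion.

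The hardest part, such as it is, is verifying that the quotient bound "surjection $A \twoheadrightarrow B$ implies $\dim_T(A) \geq \dim_T(B)$" transports cleanly from the SMRF axiom to the BSMRF extension for arbitrary (possibly non-finitely-generated) modules. This is a purely formal matter: apply the additivity axiom (vi) to the short exact sequence $0 \to \ker \to A \to B \to 0$ to get $\dim_T(A) = \dim_T(\ker \mid A) + \dim_T(B) \geq \dim_T(B)$. With this minor verification in hand, the rest of the argument proceeds exactly as sketched.
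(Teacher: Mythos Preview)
Your proposal is correct and follows essentially the same approach as the paper: reduce to finitely generated $M'$ via \cref{lem: dim_directed_union}, then to free $R^k$ via a surjection, where the direct-sum axiom finishes the job. The only cosmetic difference is that you explicitly work with the images $\im(L\otimes_R M'\to L\otimes_R M)$ in the directed union (rather than writing $\cup_i (L\otimes_R M_i)$ as the paper does), and you spell out why the quotient bound holds for the BSMRF extension; both are minor points of extra care, not a different argument.
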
 
    
    \begin{proof}
        We write $M$ as a directed union of its finitely generated left $R$-submodules, say $M=\cup_{i\in I}M_i$. Now since tensor product and direct limit commute we get
        \[
        \dim_T (L\otimes_R M)=\dim_T \left(\cup_{i\in I}(L\otimes_R M_i)\right).
        \]
        According to \cref{lem: dim_directed_union}
        \[
        \dim_T (L\otimes_R M)\leq \sup_{i\in I}\dim_T (L\otimes_R M_i).
        \]
        Moreover, $M_i$ is finitely generated for all $i\in I$. Hence, for each $i$ there must be some $n_i\in \mathbb{N}$ such that $R^{n_i}$ surjects onto $M_i$. Thus, since tensor product is right exact, by \cref{defn: BSMRF} it holds
        \begin{align*}
            \dim_T (L\otimes_R M)\leq \sup_{i\in I}\dim_T (L\otimes_R M_i)& \leq \sup_{i\in I}\dim_T (L\otimes_R R^{n_i}) \\
             &= \sup_{i\in I} n_i\cdot\dim_T (L)=0.
        \end{align*}
    \end{proof}
    
    \begin{rem}\label{rem: vanish_proj_resol}
        With the above notation, observe that if $P_{\bullet}:\ldots\rightarrow P_1\rightarrow P_0\rightarrow M\rightarrow 0$ is a projective resolution for the left $R$-module $M$, then by \cref{lem: dim_zero_extends_by_right} it holds that $\dim_T (L\otimes_R P_i)=0$. In particular, if $\dim_T$ is exact then $\dim_T(\Tor_k^R(L,M))=0$ for every non-negative integer $k$.
    \end{rem}
    
    In this article we shall be interested in computing dimensions of certain modules. Recall from \cref{defn: BSMRF} that isomorphic modules have same dimension. However, the notion of isomorphism is rigid in the sense that there might be non-isomorphic modules with same dimension. So it is reasonable to work with the following weaker notion.
    
    \begin{defn}\label{defn: dim_isomph}
        Let $R$ be a ring with SMRF $\dim_R$. An $R$-homomorphism $f:M\rightarrow N$ between left $R$-modules $M,N$ is called a {\it left $\dim_R$-isomorphism} if $\dim_R(\ker f)=\dim_R(\coker f)=0$. If there exists a left $\dim_R$-isomorphism between two left modules, then we say that the modules are {\it $\dim_R$-isomorphic}.
    \end{defn}
    
    Note that a similar notion can be defined for right $R$-modules. When working with dimensions, one should think of dimension-isomorphic modules as usual isomorphic modules. For the reader's convenience, we write down a proof of the next obvious fact.
    
    \begin{lem}\label{lem: comp_dim_isomph_is_dim_isomph}
        Let $R$ be a ring with exact SMRF $\dim_R$ and let $f:M_1\rightarrow M_2$ and $g: M_2\rightarrow M_3$ be both left $\dim_R$-isomorphisms. Then $g\circ f:M_1\rightarrow M_3$ is a left $\dim_R$-isomorphism.
    \end{lem}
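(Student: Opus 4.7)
The plan is to show separately that $\dim_R(\ker(g\circ f))=0$ and $\dim_R(\coker(g\circ f))=0$, exploiting exactness of $\dim_R$ together with monotonicity of the induced BSMRF (\cref{lem: continuity_dim}) to pass the vanishing of the dimensions of the kernels and cokernels of $f$ and $g$ to the composition.

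For the kernel, I would use the tautological identity $\ker(g\circ f)=f^{-1}(\ker g)$. Restricting $f$ to $\ker(g\circ f)$ produces a short exact sequence
\[
0\longrightarrow \ker f\longrightarrow \ker(g\circ f)\longrightarrow f(\ker(g\circ f))\longrightarrow 0,
\]
and the image $f(\ker(g\circ f))$ is a submodule of $\ker g$. By \cref{lem: continuity_dim} and the exactness of $\dim_R$, $\dim_R(f(\ker(g\circ f)))\leq \dim_R(\ker g)=0$, and combined with $\dim_R(\ker f)=0$ additivity along the above short exact sequence yields $\dim_R(\ker(g\circ f))=0$.

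For the cokernel, I would consider the filtration $g(f(M_1))\subseteq g(M_2)\subseteq M_3$, giving the short exact sequence
\[
0\longrightarrow g(M_2)/g(f(M_1))\longrightarrow \coker(g\circ f)\longrightarrow \coker g\longrightarrow 0.
\]
The middle term is a quotient of $\coker f = M_2/f(M_1)$ under the map induced by $g$, so by \cref{lem: continuity_dim} (applied in its quotient form via additivity of $\dim_R$ on the short exact sequence $0\to K\to \coker f\to g(M_2)/g(f(M_1))\to 0$) its dimension is at most $\dim_R(\coker f)=0$. Exactness of $\dim_R$ on the displayed sequence then gives $\dim_R(\coker(g\circ f))=0$.

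No substantial obstacle is expected: the argument is a direct bookkeeping with short exact sequences, and the exactness hypothesis on $\dim_R$ is exactly what converts the vanishing of $\dim_R(\ker f)$, $\dim_R(\ker g)$, $\dim_R(\coker f)$ and $\dim_R(\coker g)$ into the desired vanishing for the composition. The only point that requires mild care is confirming that one may legitimately apply additivity to the infinite or non-finitely-generated modules at hand, which is handled by the discussion following \cref{thm: SMRF_to_BSMRF} showing that an exact SMRF induces an additive BSMRF on arbitrary modules.
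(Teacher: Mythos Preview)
Your proposal is correct and follows essentially the same approach as the paper: the paper writes down the exact sequence $0\to\ker f\to\ker(g\circ f)\to f(M_1)\cap\ker g\to 0$ (your $f(\ker(g\circ f))$ equals $f(M_1)\cap\ker g$) and the sequence $\coker f\to\coker(g\circ f)\to\coker g\to 0$ (your short exact sequence for the cokernel is just this with the image of the first map identified), and then invokes exactness together with \cref{lem: continuity_dim} exactly as you do.
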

    
    \begin{proof}
        Consider the next two exact sequences
        \[
        M_2/f(M_1)\rightarrow M_3/g\circ f(M_1)\rightarrow M_3/g(M_2)\rightarrow 0
        \]
        and
        \[
        0\rightarrow\ker f\rightarrow \ker g\circ f \rightarrow f(M_1)\cap\ker g\rightarrow 0.
        \]
        Note that since the dimension is exact, $\dim_R (f(M_1)\cap \ker g)\leq \dim_R (\ker g)$. So then taking dimensions on both sequences yield to the desired result by \cref{defn: BSMRF} along with \cref{lem: continuity_dim}.
    \end{proof}
    
    We now show that the functor $\Tor$ is left dimension isomorphic invariant after changing left dimension isomorphic bimodules on the left hand side.
    
    \begin{lem}\label{lem: dim_isomph_tensor_right}
        Let $T$ be a ring with exact SMRF $\dim_T$ and $L_1,L_2$ $T$-$R$-bimodules. If $\varphi:L_1\rightarrow L_2$ is a $T$-$R$-homomorphism which is a left $\dim_T$-isomorphism, then for every non-negative integer $k$ the induced map
        \[
        \Tor_k^R(L_1,M)\longrightarrow\Tor_k^R(L_2,M)
        \]
        is a left $\dim_T$-isomorphism for every left $R$-module $M$.
    \end{lem}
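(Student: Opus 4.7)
The plan is to break $\varphi$ into its surjective and injective pieces and run the long exact sequence of $\Tor$ on each, invoking \cref{rem: vanish_proj_resol} to kill the terms coming from $\ker\varphi$ and $\coker\varphi$.

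First I would factor $\varphi$ through its image to obtain two short exact sequences of $T$-$R$-bimodules,
\[
0\rightarrow \ker\varphi\rightarrow L_1\rightarrow \im\varphi\rightarrow 0 \qquad\text{and}\qquad 0\rightarrow \im\varphi\rightarrow L_2\rightarrow \coker\varphi\rightarrow 0.
\]
By hypothesis $\dim_T(\ker\varphi)=\dim_T(\coker\varphi)=0$, and since $\dim_T$ is exact, \cref{rem: vanish_proj_resol} (applied on the right with the variable $M$) gives $\dim_T(\Tor_k^R(\ker\varphi,M))=\dim_T(\Tor_k^R(\coker\varphi,M))=0$ for every non-negative integer $k$.

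Next I would apply the long exact sequence of $\Tor_\bullet^R(-,M)$ to each short exact sequence. From the first, the connecting-morphism exact sequence
\[
\Tor_k^R(\ker\varphi,M)\rightarrow \Tor_k^R(L_1,M)\xrightarrow{\alpha_k}\Tor_k^R(\im\varphi,M)\rightarrow \Tor_{k-1}^R(\ker\varphi,M)
\]
shows that $\ker\alpha_k$ is a quotient of a dim-$0$ module and $\coker\alpha_k$ is a submodule of a dim-$0$ module; by exactness of $\dim_T$ and \cref{lem: continuity_dim} both have dim zero, so $\alpha_k$ is a $\dim_T$-isomorphism. (For $k=0$ the cokernel vanishes outright since $-\otimes_RM$ is right exact, and the kernel is a quotient of $\ker\varphi\otimes_RM$, which has dim $0$ by \cref{lem: dim_zero_extends_by_right}.) The same argument applied to the second short exact sequence produces a $\dim_T$-isomorphism $\beta_k:\Tor_k^R(\im\varphi,M)\rightarrow \Tor_k^R(L_2,M)$.

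Finally, by naturality the induced map $\Tor_k^R(L_1,M)\rightarrow \Tor_k^R(L_2,M)$ coincides with $\beta_k\circ\alpha_k$, and \cref{lem: comp_dim_isomph_is_dim_isomph} yields that it is a $\dim_T$-isomorphism. The only subtlety is the dim-$0$ bookkeeping in the long exact sequence: one must carefully use additivity of $\dim_T$ to conclude that subquotients of dim-$0$ modules are again dim-$0$, which is exactly what the exactness hypothesis on $\dim_T$ provides. I do not expect a genuine obstacle beyond this routine chase.
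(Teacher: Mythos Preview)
Your proposal is correct and follows essentially the same approach as the paper: factor $\varphi$ through its image, use \cref{rem: vanish_proj_resol} to show the $\Tor$ terms involving $\ker\varphi$ and $\coker\varphi$ have zero dimension, deduce from the two long exact sequences that the intermediate maps are $\dim_T$-isomorphisms, and then compose via \cref{lem: comp_dim_isomph_is_dim_isomph}. Your write-up is slightly more explicit about the $k=0$ case and the subquotient bookkeeping, but there is no substantive difference.
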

    
    \begin{proof}
        We start by considering the next two short exact sequences of $T$-$R$-bimodules
        \[
        0\rightarrow\ker\varphi\rightarrow L_1\rightarrow \im\varphi\rightarrow 0
        \]
        and
        \[
        0\rightarrow\im\varphi\rightarrow L_2\rightarrow \coker\varphi\rightarrow 0.
        \]
        Taking tensor product on the first one yields to the following long exact sequence
        \begin{align*}
            \ldots&\rightarrow \Tor_1^R(\ker \varphi, M)\rightarrow \Tor_1^R(L_1,M)\rightarrow\Tor_1^R(\im \varphi, M)\\
            &\rightarrow \ker\varphi\otimes_R M\rightarrow L_1\otimes_R M\rightarrow \im \varphi \otimes_R M\rightarrow 0.
        \end{align*}
        However, by assumption $\dim_T(\ker \varphi) =0$; hence by \cref{rem: vanish_proj_resol}, since $\dim_T$ is also exact, $\dim_T (\Tor_k^R(\ker \varphi, M))=0$ for all $k\geq 0$. So
        \[
        \Tor_k^R(L_1,M)\rightarrow\Tor_k^R(\im \varphi,M)
        \]
        is a left $\dim_T$-isomorphism for all $k\geq 0$. Similarly, from the second short exact sequence we obtain that
        \[
        \Tor_k^R(\im \varphi,M)\rightarrow\Tor_k^R(L_2,M)
        \]
        is a left $\dim_T$-isomorphism for all $k\geq 0$. Composing both maps we get the desired left $\dim_T$-isomorphism according to \cref{lem: comp_dim_isomph_is_dim_isomph}.
    \end{proof}
    
    The above results show how the functor $\Tor$ behaves when taking tensor product on the right. On the other hand, let us now consider what happens when the tensor product is taking on the left hand side. Before so, we introduce a definition to ease the notation.
    
    \begin{defn}\label{defn: dim_compatibility}
        Let $R,T$ be two rings with SMRF $\dim_R$ and $\dim_T$, respectively. A $T$-$R$-bimodule $L$ is called {\it $\dim_T$-$\dim_R$-compatible} if for every left $R$-module $M$ with $\dim_R(M)=0$ it holds that $\dim_T(L\otimes_RM)=0$.
    \end{defn}
    
    \begin{rem}\label{rem: dim_compatibility_behaviour}
        This is a natural extension of the concept of {\it dimension-compatible} defined in \cite[Definition 4.6]{SauerGroupoids} for bimodules of finite von Neumann algebras. In the same fashion as in \cite[Lemma 4.7]{SauerGroupoids}, this compatibility notion behaves as expected with tensor product, quotients and direct summands.
    \end{rem}

    Unlike in \cref{lem: dim_zero_extends_by_right}, this compatibility notion turns out to be the key point to preserve the vanishing dimension of a left module when we tensor it with a bimodule.
    
    \begin{lem}\label{lem: dim_zero_extends_by_left}
        Let $R\subseteq S$ and $T$ be rings, $\dim_T$ an exact SMRF for $T$, $\dim_R$ a SMRF for $R$ and $M$ a left $S$-module with $\dim_R (M)=0$. Suppose that $S$ is a flat right $R$-module which is $\dim_R$-$\dim_R$-compatible and $L$ is a $T$-$S$-bimodule which is $\dim_T$-$\dim_R$-compatible and flat as right $R$-module. Then for every non-negative integer $k$ it holds that
        \[
        \dim_T(\Tor_k^S(L,M))=0.
        \]
    \end{lem}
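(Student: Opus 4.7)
The plan is to avoid a flat resolution of $M$ over $S$ and instead exploit the relative bar resolution of $M$, whose terms are only acyclic for $\Tor^S(L,-)$. Set $N_n:=S^{\otimes_R n}\otimes_R M$ (so $N_0=M$) and $B_n:=S\otimes_R N_n$; the usual bar differentials, together with the augmentation $B_0=S\otimes_R M\to M$ given by multiplication, make $B_\bullet\to M\to 0$ into a complex which is exact as a complex of $R$-modules (hence of $S$-modules) through the standard $R$-linear contracting homotopy $s_0\otimes\cdots\otimes s_n\otimes m\mapsto 1\otimes s_0\otimes\cdots\otimes s_n\otimes m$. Shapiro's lemma, applicable because $S$ is flat as a right $R$-module, then gives
\[
\Tor_j^S(L,B_n)=\Tor_j^S(L,S\otimes_R N_n)\cong \Tor_j^R(L,N_n),
\]
and the right hand side vanishes for $j\geq 1$ since $L$ is flat as a right $R$-module.

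Since $B_\bullet$ is a resolution of $M$ by $\Tor^S(L,-)$-acyclic modules, a standard Cartan--Eilenberg argument (take a projective bicomplex resolution $P_{\bullet\bullet}\to B_\bullet$; one of the two spectral sequences of $L\otimes_S P_{\bullet\bullet}$ collapses by acyclicity) yields
\[
\Tor_k^S(L,M)\cong H_k(L\otimes_S B_\bullet).
\]
Moreover, $L\otimes_S B_n=L\otimes_S(S\otimes_R N_n)\cong L\otimes_R N_n$ via $l\otimes(s\otimes x)\mapsto ls\otimes x$, so it remains to bound the $\dim_T$-dimension of the homology of the complex $L\otimes_R N_\bullet$.

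By induction on $n$, $\dim_R(N_n)=0$ for every $n\geq 0$: the base case is the hypothesis $\dim_R(M)=0$, and $N_n=S\otimes_R N_{n-1}$ inherits $\dim_R=0$ from the $\dim_R$-$\dim_R$-compatibility of $S$. The $\dim_T$-$\dim_R$-compatibility of $L$ then gives $\dim_T(L\otimes_R N_n)=0$ for every $n$. Exactness of $\dim_T$ implies that every subquotient of a module of $\dim_T$-zero has $\dim_T$ zero, so $\dim_T(H_k(L\otimes_R N_\bullet))=0$ and the statement follows.

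The delicate point is the identification $\Tor_k^S(L,M)\cong H_k(L\otimes_S B_\bullet)$: the terms $B_n$ are \emph{not} flat over $S$ in general, since that would demand each $N_n$ to be flat as an $R$-module, which the hypotheses do not guarantee. Hence one cannot read off $\Tor^S$ from the bar resolution naively and must invoke the bicomplex machinery for acyclic resolutions. Apart from this, the argument is a systematic application of Shapiro's lemma together with the definitions of compatibility and the exactness of $\dim_T$.
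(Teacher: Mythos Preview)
Your proof is correct, but it takes a genuinely different route from the paper's.

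The paper argues by dimension shifting in the \emph{first} variable: it considers the short exact sequence of $T$-$S$-bimodules
\[
0\longrightarrow \ker\pi_1\longrightarrow L\otimes_R S\xrightarrow{\ \pi_1\ } L\longrightarrow 0,
\]
notes that $L\otimes_R S$ is flat as a right $S$-module (so its higher $\Tor^S$ against $M$ vanish), and reads off from the long exact sequence both that $\dim_T\Tor_1^S(L,M)=0$ and that $\Tor_k^S(L,M)\cong\Tor_{k-1}^S(\ker\pi_1,M)$ for $k\geq 2$. The section $l\mapsto l\otimes 1$ makes $\ker\pi_1$ a direct $T$-$R$-summand of $L\otimes_R S$, so $\ker\pi_1$ inherits flatness over $R$ and $\dim_T$-$\dim_R$-compatibility; one then repeats with $\ker\pi_1$ in place of $L$.

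You instead work in the \emph{second} variable, replacing $M$ by its relative bar resolution $B_\bullet$ over $R\subseteq S$ and invoking the acyclic-resolution principle to identify $\Tor_k^S(L,M)$ with $H_k(L\otimes_S B_\bullet)\cong H_k(L\otimes_R N_\bullet)$. The paper's argument is more elementary in that it needs only long exact sequences and an inductive replacement of $L$, avoiding the Cartan--Eilenberg bicomplex machinery you (correctly) flag as the delicate step. Your approach, on the other hand, is more transparent structurally: it treats all degrees at once and makes it visible that the vanishing comes simply from the fact that every term $L\otimes_R N_n$ already has $\dim_T$ zero. Both approaches use the same ingredients (flatness of $S$ and $L$ over $R$, the two compatibility hypotheses, exactness of $\dim_T$); they just organise them differently.
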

    
    \begin{proof}
        First consider the natural short exact sequence of $T$-$S$-bimodules
        \[
        0\rightarrow \ker \pi_1\rightarrow L\otimes_R S\xrightarrow{\pi_1} L\rightarrow 0.
        \]
        Now we tensor by right with $M$; hence we get
        \begin{align*}
            \ldots&\rightarrow\Tor_2^S(L\otimes_R S,M)\rightarrow \Tor_2^S(L,M)\rightarrow \Tor_1^S(\ker\pi_1,M)\\
            &\rightarrow\Tor_1^S(L\otimes_R S,M)\rightarrow \Tor_1^S(L,M)\rightarrow \ker\pi_1\otimes_S M\\
            &\rightarrow L\otimes_R S\otimes_S M\rightarrow L\otimes_S M\rightarrow 0.
        \end{align*}
        Note that $L\otimes_R S$ is flat as right $S$-module. So for every $k\geq 2$
        \[
        \Tor_k^S(L,M)\cong \Tor_{k-1}^S(\ker \pi_1, M).
        \]
        Moreover, $\ker \pi_1$ is a direct summand of $L\otimes_R S$ as $T$-$R$-bimodule because of the section map $l\mapsto l\otimes 1$ for $l\in L$. In particular, since $L\otimes_R S$ is $\dim_T$-$\dim_R$-compatible so is $\ker \pi_1$. Hence
        \[
        0=\dim_T (\ker \pi_1\otimes_R M)\geq \dim_T(\ker \pi_1\otimes_S M).
        \]
        Therefore, by exactness of $\dim_T$ we get that
        \[
        \dim_T(\Tor_1^S(L,M))=0.
        \]
        Not only that, $L\otimes_R S$ is flat as right $R$-module; and then since $L$ is flat as right $R$-module, so is $\ker \pi_1$. To sum up, $\ker \pi_1$ satisfies all the hypothesis imposed on $L$. Thus, to show the vanishing of the dimension of higher order terms we proceed by dimension shifting arguing with $\ker \pi_1$ instead of $L$.
    \end{proof}
    
    We now show that the functor $\Tor$ behaves also well after changing dimension isomorphic left modules on the right hand side in the following circumstances.
    
    \begin{lem}\label{lem: dim_isomph_tensor_left}
        Let $R\subseteq S$ and $T$ be rings, $\dim_T$ an exact SMRF for $T$ and $\dim_R$ a SMRF for $R$. Suppose that $S$ is a flat right $R$-module which is $\dim_R$-$\dim_R$-compatible and $L$ is a $T$-$S$-bimodule which is $\dim_T$-$\dim_R$-compatible and flat as right $R$-module. If $\varphi: M_1\rightarrow M_2$ is a left $S$-homomorphism  which is a left $\dim_R$-isomorphism, then for every non-negative integer $k$ the induced map
        \[
        \Tor_k^S(L,M_1)\longrightarrow\Tor_k^S(L,M_2)
        \]
        is a left $\dim_T$-isomorphism.
    \end{lem}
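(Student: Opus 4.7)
The plan is to mirror the argument of \cref{lem: dim_isomph_tensor_right}, exchanging the role of the two arguments of $\Tor$ and appealing to \cref{lem: dim_zero_extends_by_left} in place of \cref{rem: vanish_proj_resol}. Concretely, given a left $S$-homomorphism $f\colon M_1\to M_2$ that is a $\dim_R$-isomorphism, I would factor it through its image and split the situation into two short exact sequences of left $S$-modules,
\[
0\to \ker f\to M_1\to \im f\to 0 \qquad\text{and}\qquad 0\to \im f\to M_2\to \coker f\to 0.
\]
By the hypothesis that $f$ is a $\dim_R$-isomorphism, both $\ker f$ and $\coker f$ have vanishing $\dim_R$-dimension.

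The key input is \cref{lem: dim_zero_extends_by_left}: applied to the left $S$-modules $\ker f$ and $\coker f$ (using that $S$ is a flat, $\dim_R$-$\dim_R$-compatible right $R$-module and that $L$ is $\dim_T$-$\dim_R$-compatible and flat as right $R$-module), it yields
\[
\dim_T\bigl(\Tor_k^S(L,\ker f)\bigr)=0=\dim_T\bigl(\Tor_k^S(L,\coker f)\bigr)
\]
for every non-negative integer $k$.

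Now I would feed each of the two short exact sequences into the long exact sequence of $\Tor_\bullet^S(L,-)$. Because $\dim_T$ is exact and the outer terms in each connecting three-term piece have zero $\dim_T$-dimension, the induced maps
\[
\Tor_k^S(L,M_1)\longrightarrow \Tor_k^S(L,\im f)\qquad\text{and}\qquad \Tor_k^S(L,\im f)\longrightarrow \Tor_k^S(L,M_2)
\]
are $\dim_T$-isomorphisms for every $k\geq 0$. Composing them and invoking \cref{lem: comp_dim_isomph_is_dim_isomph} gives the desired $\dim_T$-isomorphism $\Tor_k^S(L,M_1)\to \Tor_k^S(L,M_2)$. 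Since each step is an essentially routine piece of homological bookkeeping, I do not anticipate a genuine obstacle here; the only point where one must be careful is checking that the hypotheses of \cref{lem: dim_zero_extends_by_left} transfer verbatim to the $S$-modules $\ker f$ and $\coker f$, which is immediate because those hypotheses are on $L$ and $S$, not on the module being resolved.
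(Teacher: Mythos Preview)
Your proposal is correct and follows exactly the approach the paper indicates: the paper's proof simply states that the result follows from \cref{lem: dim_zero_extends_by_left} by replicating the proof of \cref{lem: dim_isomph_tensor_right}, and you have carried out precisely that replication.
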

    
    \begin{proof}
        Follows directly from \cref{lem: dim_zero_extends_by_left} replicating the proof of \cref{lem: dim_isomph_tensor_right}.
    \end{proof}
    
    To end this section we state sufficient conditions in order to change the ring under we are taking tensor product.
    
    \begin{thm}\label{thm: dim_isomph_changing_scalars}
        Let $R\subseteq S_1\subseteq S_2$ and $T$ be rings and let $\dim_T$ and $\dim_R$ be exact SMRF for $T$ and $R$, respectively. Suppose that $S_1\subseteq S_2$ is a left $\dim_R$-isomorphism, $S_2$ is a flat right $R$-module which is $\dim_R$-$\dim_R$-compatible and $L$ is a $T$-$S_2$-bimodule which is $\dim_T$-$\dim_R$-compatible and flat as right $R$-module. Then for every left $S_2$-module $M$ the homomorphism $S_2\otimes_{S_1}M\rightarrow M$ induces a left $\dim_T$-isomorphism
        \[
        \Tor_k^{S_1}(L,M)\longrightarrow\Tor_k^{S_2}(L,M)
        \]
        for every non-negative integer $k$.
    \end{thm}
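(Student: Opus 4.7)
The plan is to proceed in three steps. First I will show that the multiplication map $\mu\colon S_2\otimes_{S_1}M\to M$ is a $\dim_R$-isomorphism of left $R$-modules: $\mu$ is surjective and admits an $R$-linear section $\iota\colon m\mapsto 1\otimes m$, so $S_2\otimes_{S_1}M=\iota(M)\oplus\ker\mu$ and $\ker\mu\cong (S_2/S_1)\otimes_{S_1}M$. Additivity of the exact $\dim_R$ on $0\to S_1\to S_2\to S_2/S_1\to 0$ gives $\dim_R(S_2/S_1)=0$, and then \cref{lem: dim_zero_extends_by_right} applied with $T=R$ and base ring $S_1$ to the $R$-$S_1$-bimodule $S_2/S_1$ yields $\dim_R(\ker\mu)=0$. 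Since $\mu$ is $S_2$-linear, \cref{lem: dim_isomph_tensor_left} applied with $S=S_2$ (whose hypotheses coincide with those of the present theorem) produces a $\dim_T$-isomorphism
\[
\mu_*\colon\Tor_k^{S_2}(L,S_2\otimes_{S_1}M)\longrightarrow\Tor_k^{S_2}(L,M)
\]
for every $k\geq 0$.

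Second, I will establish a $\dim_T$-isomorphism $\Tor_k^{S_2}(L,S_2\otimes_{S_1}M)\simeq\Tor_k^{S_1}(L,M)$ via the Grothendieck spectral sequence
\[
E^2_{p,q}=\Tor_p^{S_2}(L,\Tor_q^{S_1}(S_2,M))\Longrightarrow\Tor_{p+q}^{S_1}(L,M)
\]
arising from the factorisation of functors $L\otimes_{S_1}-\cong (L\otimes_{S_2}-)\circ(S_2\otimes_{S_1}-)$; this spectral sequence exists because $S_2\otimes_{S_1}-$ sends projective $S_1$-modules to projective (hence $\Tor_k^{S_2}(L,-)$-acyclic) $S_2$-modules. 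To kill the rows with $q\geq 1$ I claim $\dim_R(\Tor_q^{S_1}(S_2,M))=0$: the long exact sequence obtained by applying $\Tor_\bullet^{S_1}(-,M)$ to $0\to S_1\to S_2\to S_2/S_1\to 0$ gives an injection $\Tor_q^{S_1}(S_2,M)\hookrightarrow\Tor_q^{S_1}(S_2/S_1,M)$ for $q\geq 1$ (using $\Tor_q^{S_1}(S_1,M)=0$); and computing the latter from a flat $S_1$-resolution $F_\bullet\to M$, each term $(S_2/S_1)\otimes_{S_1}F_i$ has $\dim_R=0$ by \cref{lem: dim_zero_extends_by_right}, so its homology is likewise $\dim_R$-zero. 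Applying \cref{lem: dim_zero_extends_by_left} to the left $S_2$-module $\Tor_q^{S_1}(S_2,M)$ then forces $\dim_T(E^2_{p,q})=0$ for all $p$ and all $q\geq 1$.

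Third, exactness of $\dim_T$ lets the spectral sequence collapse up to $\dim_T$-isomorphism: the outgoing differentials from $E^r_{k,0}$ land in cells with $q=r-1\geq 1$, hence have $\dim_T$-zero image, so $E^2_{k,0}\twoheadrightarrow E^\infty_{k,0}$ has $\dim_T$-zero kernel, and the filtration quotient $\Tor_k^{S_1}(L,M)/E^\infty_{k,0}$ is a successive extension of $\dim_T$-zero pieces; hence the edge map $\psi\colon\Tor_k^{S_2}(L,S_2\otimes_{S_1}M)\to\Tor_k^{S_1}(L,M)$ is a $\dim_T$-isomorphism. Finally, a chain-level diagram chase using a common $S_1$-projective resolution $Q_\bullet\to M$ and the associativity identification $L\otimes_{S_2}(S_2\otimes_{S_1}Q_\bullet)=L\otimes_{S_1}Q_\bullet$ shows that the natural change-of-rings map $\phi\colon\Tor_k^{S_1}(L,M)\to\Tor_k^{S_2}(L,M)$ of the theorem fits into a commutative triangle $\phi\circ\psi=\mu_*$; a two-out-of-three argument using exactness of $\dim_T$ then yields that $\phi$ is itself a $\dim_T$-isomorphism. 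The main obstacle I anticipate is the careful handling of the spectral-sequence filtration to extract a genuine $\dim_T$-isomorphism, together with the chain-level verification of the commuting triangle; once both are in place, the conclusion follows directly from the dimension lemmas of this section.
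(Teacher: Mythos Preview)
Your approach is essentially the same as the paper's: both set up the K\"unneth/Grothendieck spectral sequence $E^2_{p,q}=\Tor_p^{S_2}(L,\Tor_q^{S_1}(S_2,M))\Rightarrow \Tor_{p+q}^{S_1}(L,M)$, kill the rows $q\geq 1$ in $\dim_T$ via the $\dim_R$-vanishing of $\Tor_q^{S_1}(S_2,M)$ together with \cref{lem: dim_zero_extends_by_left}, and then compose the resulting $\dim_T$-isomorphism $\Tor_k^{S_1}(L,M)\simeq \Tor_k^{S_2}(L,S_2\otimes_{S_1}M)$ with the map induced by $\mu$ (\cref{lem: dim_isomph_tensor_left}). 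Your route to $\dim_R(\Tor_q^{S_1}(S_2,M))=0$ via the long exact sequence of $0\to S_1\to S_2\to S_2/S_1\to 0$ is exactly what underlies the paper's invocation of \cref{lem: dim_isomph_tensor_right}.

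One slip to fix: the edge map for the $q=0$ row of this first-quadrant homological spectral sequence goes the other way. Since incoming differentials to $E^r_{k,0}$ come from $q=1-r<0$ and vanish, one has $E^{r+1}_{k,0}=\ker d^r_{k,0}\subseteq E^r_{k,0}$, so $E^\infty_{k,0}\hookrightarrow E^2_{k,0}$; and $E^\infty_{k,0}=H_k/F_{k-1}H_k$ is a \emph{quotient} of $H_k=\Tor_k^{S_1}(L,M)$, not a submodule. Thus the edge map is $e:\Tor_k^{S_1}(L,M)\to \Tor_k^{S_2}(L,S_2\otimes_{S_1}M)$, and the correct triangle is $\phi=\mu_*\circ e$ (which one checks at the chain level exactly as you outline, lifting $\mu$ through $S_2\otimes_{S_1}Q_\bullet\to P_\bullet$). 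Your two-out-of-three argument then goes through verbatim with $e$ in place of $\psi$; in fact this makes the conclusion immediate from \cref{lem: comp_dim_isomph_is_dim_isomph}, since $\phi$ is now a composite of two $\dim_T$-isomorphisms rather than a factor of one.
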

    
    \begin{proof}
        According to \cref{lem: dim_isomph_tensor_right}, since $S_1\subseteq S_2$ is a left $\dim_R$-isomorphism and $\dim_R$ is exact, the induced map
        \[
        \Tor_k^{S_1}(S_1,M)\longrightarrow\Tor_k^{S_1}(S_2,M)
        \]
        is a left $\dim_R$-isomorphism for every $k\geq 0$. Not only that, these maps are left $S_2$-homomorphism for $k>0$ and for $k=0$ the left inverse map, that is the natural projection $S_2\otimes_{S_1}M\rightarrow M$, is a left $S_2$-homomorphism which has to be a left $\dim_R$-isomorphism too.

        Moreover, let $P_{\bullet}:\ldots\rightarrow P_2\rightarrow P_1\rightarrow P_0\rightarrow M\rightarrow 0$ be a projective resolution for the left $S_1$-module $M$. The K\"uneth spectral sequence, applied to $L$ and the complex $S_2\otimes_{S_1}P_{\bullet}$ (see \cite[Theorem 5.6.4]{Weibel94}), has $E^2$-term
        \[
        E^2_{pq}=\Tor_p^{S_2}(L,H_q(S_2\otimes_{S_1}P_{\bullet}))=\Tor_p^{S_2}(L,\Tor_q^{S_1}(S_2,M))
        \]
        and converges to
        \[
        H_{p+q}(L\otimes_{S_2}(S_2\otimes_{S_1}P_{\bullet}))=\Tor_{p+q}^{S_1}(L,M).
        \]
        Now we apply \cref{lem: dim_isomph_tensor_left} for $R,S_2$ and $T$ to get left $\dim_T$-isomorphisms
        \begin{equation}\label{eqtn: spect_seq_1}
            \Tor_p^{S_2}(L,\Tor_q^{S_1}(S_1,M))\rightarrow \Tor_p^{S_2}\left(L,\Tor_q^{S_1}(S_2,M)\right),
        \end{equation}
        if $q>0$, and
        \begin{equation}\label{eqtn: spect_seq_2}
            \Tor_p^{S_2}\left(L,S_2\otimes_{S_1}M\right)\rightarrow \Tor_p^{S_2}(L,M)
        \end{equation}
        for $q=0$. We recall that $E^{r+1}_{pq}\cong \ker d^r_{pq}/\im d^r_{p+r,q-r+1}$ where $d^r_{pq}$ maps $E^r_{pq}$ to $E^r_{p-r,q+r-1}$ and $r\geq 2$. Hence if $q>0$, $\dim_T (E_{pq}^r)=0$ by (\ref{eqtn: spect_seq_1}); and so, $E_{pq}^{\infty}$ is a module with vanishing dimension. However, when $q=0$ the maps $d^r_{pq}$ have zero-dimensional target modules. Thus for $q=0$, $E^{r+1}_{pq}$ is $\dim_T$-isomorphic to $E_{pq}^r$ since $q-r+1<0$. Note that, for $r>p$ it holds that $E_{p0}^r=E_{p0}^{\infty}$. Therefore, $\Tor_p^{S_1}(L,M)$ is $\dim_T$-isomorphic to $E_{p0}^{\infty}$ which is $\dim_T$-isomorphic to $E^2_{p0}$ according to \cref{lem: comp_dim_isomph_is_dim_isomph}; and hence $\dim_T$-isomorphic to $\Tor_p^{S_2}(L,M)$ by (\ref{eqtn: spect_seq_2}).
    \end{proof}

    \begin{rem}
        The proofs of both \cref{prop: shapiro_dim_for_gpds} and \cref{prop: vanish_dim_mod_criteria_for_gpds} need the existence of this left dim-isomorphism, which has been the truly motivation to further extend \cite[Theorem 4.11]{SauerGroupoids}.
    \end{rem}

\section{Homological features of the translation groupoid}\label{sec: hom_features_gpd}

    As aforementioned in \cref{subsect: alg_struct_gpds}, whenever a countable group $G$ acts m.p. on a probability space $X$, we can define the discrete measured groupoid $X\rtimes G$. From now on, we shall denote by $\gpd$ the translation groupoid $X_{G,H}\rtimes G$, by $\Hgpd$ the translation subgroupoid $X_{G,H}\rtimes H$ and by $\gpd_n$ the intermediate subgroupoids of \cref{subsect: alg_struct_gpds} where $H$ is an infinite index subgroup of $G$ and $X_{G,H}$ stands for $\prod_{gH\in G/H}[0,1]$ with $G$ acting by left multiplication.

    The next result shows that the groupoid algebras still share some nice homological properties. First recall that $\gpd_n=\{(x,g)\in\gpd:\mbox{ both } s(x,g),r(x,g)\in Y_i \mbox{ for some }i\in\{1,\ldots,n\}\}$ where 
    \[
    Y_i=\left(\left[\frac{i-1}{n},\frac{i}{n}\right]\times \prod_{\substack{gH\in G/H\\ gH\neq H}}[0,1]\right)\times\{1\}.
    \]
    We review the argument that shows the finite index property since in this case we have much more than that. Suppose $(x,g)\in \gpd$ satisfies that $s(x,g)\in Y_j$ and $r(x,g)\in Y_i$ for some $i,j\in \{1,\ldots, n\}$. According to \cref{lem: section_maps_for_gpds} the Borel map $\varphi_{j,i}$ associates to $s(x,g)$ an element $\varphi_{j,i}(s(x,g))$ with range in $Y_i$. Moreover, $\mu$-a.e. $\gamma_{x,g}:=(x,g)\varphi_{j,i}(s(x,g))^{-1}$ is well defined, and so it belongs to $\gpd_n$. Thus $\mu$-a.e. we can write elements of $\gpd$ as a composition of an element in $\gpd_n$ and another element with same source, namely $(x,g)=\gamma_{x,g}\cdot \varphi_{j,i}(s(x,g))$. 
    
    Furthermore, $\mu$-a.e. given $\gamma\in \gpd_n$ and some $j\in \{1,\ldots, n\}$, there is a unique element $(x,g)\in \gpd$ such that $\gamma=\gamma_{x,g}$. Indeed, assume that $s(\gamma),r(\gamma)\in Y_i$ for some $i\in \{1,\ldots, n\}$. Since $r\circ \varphi_{j,i}:Y_j\rightarrow Y_i$ is $\mu$-a.e. surjective, then $\mu$-a.e. there is some $\beta\in \gpd$ satisfying that $s(\beta)\in Y_j$ and $r\circ\varphi_{j,i}(s(\beta))=s(\gamma)$. Thus $\mu$-a.e. $\gamma\varphi_{j,i}(s(\beta))$ is a well defined element in $\gpd$ and satisfies the desired property since $s\circ \varphi_{j,i}$ is $\mu$-a.e. injective. On the other hand, the uniqueness part goes as follows. If $(x_1,g_1)$ and $(x_2,g_2)$ are both candidates for $\gamma$ and $j$, then
    \[
    (x_1,g_1)\varphi_{j,i}(s(x_1,g_1))^{-1}=\gamma=(x_2,g_2)\varphi_{j,i}(s(x_2,g_2))^{-1}.
    \]
    However, $r\circ \varphi_{j,i}(s(x_1,g_1))=s(\gamma)=r\circ \varphi_{j,i}(s(x_2,g_2))$ and $r\circ\varphi_{j,i}$ is $\mu$-a.e. injective. Hence, $s(x_1,g_1)=s(x_2,g_2)$ $\mu$-a.e. and by right cancellation in the above equality, we get that $(x_1,g_1)=(x_2,g_2)$ $\mu$-a.e. as claimed.

    Not only that, this assignment is done in a measurable way. In fact, for each $i\in \{1,\ldots,n\}$ since $s\circ \varphi_{j,i}$ is $\mu$-a.e. injective and $r\circ \varphi_{j,i}$ is a measure isomorphism, there are Borel subsets $Y_{j,i}^1\subseteq Y_j$ and $Y_{j,i}^2\subseteq Y_i$ with $\mu(Y_j\setminus Y_{j,i}^1)=\mu(Y_i\setminus Y_{j,i}^2)=0$ such that $s\circ \varphi_{j,i}\vrule_{Y_{j,i}^1}$ is injective Borel and $r\circ \varphi_{i,j}:Y_{j,i}^1\rightarrow Y_{j,i}^2$ is bijective Borel. Thus, for $\gamma \in s^{-1}(Y_{j,i}^2)\cap \gpd_n$, $s(\beta)$ is obtained from the composition of Borel maps given by $(r\circ \varphi_{j,i})^{-1}\circ s(\gamma)$; and hence the map $f_{j,i}:s^{-1}(Y_{j,i}^2)\cap \gpd_n\rightarrow \gpd$ defined by $\gamma \mapsto \gamma \varphi_{j,i}(s(\beta))$ is Borel. This induces an injective Borel map $F_j$ on the Borel subset $\gpd_n^j:=\uplus_{i=1}^n s^{-1}(Y_{j,i}^2)\cap \gpd_n$ of $\gpd_n$ which is of full measure.

    Let us briefly give an interpretation of the map $F_j$. First, set $A_j:=\uplus_{i=1}^n\varphi_{j,i}(Y_{j,i}^1)$ which is Borel by injectivity (see \cite[Corollary 15.2]{Kech_setth}). We have shown that $\mu$-a.e. for each element in $\gpd_n$ there is a unique element of $\gpd$ that can be written as a composition of the element in $\gpd_n$ followed by an element in $A_j$. Thus $F_j$ assigns such composition in the full measure Borel subset $\gpd_n^j$ of $\gpd_n$. 

    We now focus on the algebraic structure. Note that $\C[\gpd_n]$ is naturally a subring of $\C[\gpd]$.
       
    \begin{lem}\label{lem: gpd_ring_is_free_gpd_n_module}
	$\C[\gpd]$ is free as right $\C[\gpd_n]$-module.
    \end{lem}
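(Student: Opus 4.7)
The plan is to exhibit an explicit basis of size $n$ for $\C[\gpd]$ as a right $\C[\gpd_n]$-module, in close analogy with the case of a group ring being free over a finite-index subgroup ring via a transversal. For each $j \in \{1,\ldots,n\}$, set $u_j := \chi_{A_j}$ with $A_j = \bigcup_{i=1}^n \varphi_{j,i}(Y_j)$ the ``transversal'' subset constructed in the preamble, and I claim that $\{u_1, \ldots, u_n\}$ is a right $\C[\gpd_n]$-basis of $\C[\gpd]$. First, $u_j \in \C[\gpd]$ because $S(u_j) \leq n\,\chi_{Y_j}$ and $R(u_j) \leq \chi_{\gpd^0}$ essentially: each $\varphi_{j,i}$ is a section of $s$ over $Y_j$, and $r \circ \varphi_{j,i}\colon Y_j \to Y_i$ is a measure isomorphism by \cref{lem: section_maps_for_gpds}.

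For spanning, given $\phi \in \C[\gpd]$ define $a_j \in \C[\gpd_n]$ by
\[
a_j(\delta) := \phi\bigl(\delta \cdot \varphi_{j,i}((r\circ\varphi_{j,i})^{-1}(s(\delta)))\bigr)
\]
whenever $\delta \in \gpd_n$ has $s(\delta),r(\delta) \in Y_i$ for some $i$, and $a_j(\delta) := 0$ otherwise. Essential boundedness of $\phi$ together with the measure isomorphism $r \circ \varphi_{j,i}$ yields $a_j \in \C[\gpd_n]$. A direct convolution computation using the convention $(\phi\eta)(\gamma) = \sum_{\gamma_2\gamma_1 = \gamma}\phi(\gamma_1)\eta(\gamma_2)$ and the unique factorization $\gamma = \delta_\gamma \cdot \varphi_{j,i}(s(\gamma))$ for $\gamma \in \gpd$ with $s(\gamma) \in Y_j$ and $r(\gamma) \in Y_i$ (established in the preamble, up to null sets) shows $(u_j a_j)(\gamma) = \phi(\gamma) \chi_{s^{-1}(Y_j)}(\gamma)$; summing over $j$ and using $\gpd^0 = \bigsqcup_j Y_j$ gives $\phi = \sum_{j=1}^n u_j a_j$.

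For independence, suppose $\sum_j u_j c_j = 0$ with $c_j \in \C[\gpd_n]$. Since each $u_j c_j$ is supported in $s^{-1}(Y_j)$ and the $Y_j$'s are pairwise disjoint, each $u_j c_j$ must vanish. Writing $c_j = \sum_i e_i c_j e_i$ via the mutually orthogonal idempotents $e_i := \chi_{Y_i}$ in $\C[\gpd_n]$ (noting that every element of $\gpd_n$ has source and range in the same $Y_i$), one obtains $u_j c_j = \sum_i u_{j,i}(e_i c_j e_i)$ with $u_{j,i} := \chi_{\varphi_{j,i}(Y_j)}$; disjointness of the ranges of the $u_{j,i}$-parts then forces each summand to vanish, and the bijective parametrization of $s^{-1}(Y_j) \cap r^{-1}(Y_i)$ by $\{\delta \in \gpd_n : s(\delta),r(\delta) \in Y_i\}$ forces $e_i c_j e_i = 0$, hence $c_j = 0$. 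The main technical nuisance throughout is the measure-theoretic bookkeeping, in particular verifying membership of the coefficients $a_j$ in $\C[\gpd_n]$; once the factorization machinery of the preamble is in hand, the argument mirrors the classical transversal construction.
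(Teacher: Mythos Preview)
Your proof is correct and follows essentially the same approach as the paper: you choose the same basis $\{\chi_{A_1},\ldots,\chi_{A_n}\}$, and your coefficient functions $a_j$ are exactly the paper's $\phi_j := \phi \circ F_j$ written out explicitly via the inverse of $r\circ\varphi_{j,i}$. The paper dispatches linear independence in a single sentence by invoking the unique factorization $\gamma = \gamma_{x,g}\cdot\varphi_{j,i}(s(\gamma))$, whereas you spell out the source-disjointness and idempotent decomposition; this is more detail but not a different idea.
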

    
    \begin{proof}
        We claim that $\{\chi_{A_1},\ldots,\chi_{A_n}\}$ is a basis of $\C[\gpd]$ as right $\C[\gpd_n]$-module. Indeed, given $\phi\in \C[\gpd]$, choose some $(x,g)\in \gpd$ and suppose that $s(x,g)\in Y_j$ and $r(x,g)\in Y_i$. Then we know that $\mu$-a.e. $(x,g)=\gamma_{x,g}\varphi_{j,i}(s(x,g))$, and hence $(x,g)=F_j(\gamma_{x,g})$ $\mu$-a.e. In other words, $\phi(x,g)=\phi(F_j(\gamma_{x,g}))$ $\mu$-a.e. Thus if we define $\phi_j\in \C[\gpd_n]$ with support in $\gpd_n^j$ as $\phi\circ F_j$, for $j = 1,\ldots, n$, we get that
        \[
        \phi=\chi_{A_1}\cdot\phi_{1}+\ldots+\chi_{A_n}\cdot \phi_{n}
        \]
        as $L^{\infty}(\gpd)$ functions. Finally, they are independent over $\gpd$ because the decomposition of elements of $\gpd$ as a product of elements in $\gpd_n$ followed by elements in $A_j$ for some $j\in \{1,\ldots, n\}$ can be done $\mu$-a.e.
    \end{proof}

    \begin{rem}
        Similarly, it can be shown that $\C[\gpd]$ is a free left $\C[\gpd_n]$-module taking the dual decomposition.
    \end{rem}
  
    One particular interest of the above statement is that it enables us to apply Shapiro's lemma in the context of groupoid algebras.  This will be crucial to use the tools of homological algebra.

    Not only that, we can also understand the relation between $\VN(\gpd)$ and $\VN(\gpd_n)$. On the one side, there is a unitary map between the Hilbert spaces $\oplus_{j=1}^nL^2(\gpd_n)$ and $L^2(\gpd)$. Indeed, given $(\phi_1,\ldots,\phi_n)\in \oplus_{j=1}^nL^2(\gpd_n)$ we set $\phi:=\chi_{A_1}\cdot \phi_{1}+\ldots+\chi_{A_n}\cdot \phi_{n}$ and denote this map by $u$. Similarly, we have a unitary map $u^*$ from $L^2(\gpd)$ to $\oplus_{j=1}^nL^2(\gpd_n)$ that sends $\phi\in L^2(\gpd)$ to $(\phi_1,\ldots,\phi_n)$ where $\phi_j$ is a function with support in $\gpd_n^j$ defined as $\phi\circ F_j$. In particular, given $\Psi\in \VN(\gpd)$ we can associate an element in $\Mat_n(\VN(\gpd_n))$, $u^*\circ\Psi\circ u$, which we denote by $\res^{\scalebox{0.8}{$\gpd$}}_{\scalebox{0.8}{$\gpd_n$}}(\Psi)$.
  
    \begin{lem}\label{lem: rest_trace_is_multiplicative}
        Let $\trace_{\scalebox{0.8}{$\VN(\gpd)$}}$ and $\trace_{\scalebox{0.8}{$\VN(\gpd_n)$}}$ denote the corresponding trace function of $\gpd$ and its finite index subgroupoid $\gpd_n$, respectively. Then for every $\Psi\in \VN(\gpd)$ it holds
        \[
        \trace_{\scalebox{0.8}{$\VN(\gpd_n)$}}(\res^{\scalebox{0.8}{$\gpd$}}_{\scalebox{0.8}{$\gpd_n$}}(\Psi))=n\cdot \trace_{\scalebox{0.8}{$\VN(\gpd)$}}(\Psi).
        \]
    \end{lem}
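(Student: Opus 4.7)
The plan is to verify the identity for $\phi\in\C[\gpd]$ by a direct convolution computation and then extend to all of $\VN(\gpd)$ by normality.

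First I would unpack the left-hand side using the vector description of the trace. Writing $u_i:=u\circ\iota_i\colon L^2(\gpd_n)\to L^2(\gpd)$ for the components of the unitary $u$, so that $u_i(\phi)=\chi_{A_i}\cdot\phi$, a one-line convolution check gives $u_i(\chi_{\gpd^0})=\chi_{A_i}\cdot\chi_{\gpd^0}=\chi_{A_i}$. The $i$-th diagonal entry of $\res^{\gpd}_{\gpd_n}(\Psi)$ is $u_i^*\Psi u_i$, so the sum-of-diagonals convention for the trace on $\Mat_n(\VN(\gpd_n))$, together with $\trace_{\VN(\gpd_n)}(A)=\langle A(\chi_{\gpd^0}),\chi_{\gpd^0}\rangle$, reduces the claim to
\[
\sum_{i=1}^n \langle \Psi(\chi_{A_i}),\chi_{A_i}\rangle_{L^2(\gpd)}=n\cdot\trace_{\VN(\gpd)}(\Psi).
\]

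Second, I would verify this identity for $\Psi=\phi\in\C[\gpd]$ by expanding the convolution. For $\gamma\in A_i$, the requirement $\gamma_2\in A_i$ in the sum $(\phi\chi_{A_i})(\gamma)=\sum_{\gamma_2\gamma_1=\gamma}\phi(\gamma_1)\chi_{A_i}(\gamma_2)$ uniquely determines $\gamma_2=\gamma$ by \cref{lem: section_maps_for_gpds}, since $A_i$ contains exactly one element with any prescribed range; hence $\gamma_1=s(\gamma)$ and $(\phi\chi_{A_i})(\gamma)=\phi(s(\gamma))$. Integrating over the partition $A_i=\sqcup_{l=1}^n\varphi_{i,l}(Y_i)$, where each piece is a measure-isomorphic copy of $Y_i$ via $s\circ\varphi_{i,l}=\id_{Y_i}$, gives $\langle\phi\chi_{A_i},\chi_{A_i}\rangle=n\int_{Y_i}\phi\,d\mu$. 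Summing over $i$ and using $\gpd^0=\sqcup_i Y_i$ produces $n\int_{\gpd^0}\phi\,d\mu=n\trace_{\VN(\gpd)}(\phi)$.

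Third, I would pass from $\C[\gpd]$ to $\VN(\gpd)$ by continuity. Both sides of the displayed identity are $\sigma$-weakly continuous linear functionals on $\VN(\gpd)$: the left side as a finite sum of vector functionals, the right side as the canonical trace. Combined with the $\sigma$-weak density of $\C[\gpd]$ in $\VN(\gpd)$, the identity extends from $\C[\gpd]$ to all of $\VN(\gpd)$. The main obstacle is the convolution bookkeeping in the second step, where one must carefully use the excerpt's convention $(\phi\eta)(\gamma)=\sum_{\gamma_2\gamma_1=\gamma}\phi(\gamma_1)\eta(\gamma_2)$ and isolate the unique decomposition of $\gamma\in A_i$ contributing to the sum; everything else is routine.
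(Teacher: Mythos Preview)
Your three-step outline (reduce to a vector-state identity, verify on $\C[\gpd]$, extend by $\sigma$-weak continuity) is a legitimate route and genuinely different from the paper's proof. The paper works directly for arbitrary $\Psi\in\VN(\gpd)$: it uses only the defining left-equivariance $\Psi(\chi_{A_i})=\chi_{A_i}\cdot\Psi(1)$, then rewrites $\langle(\chi_{A_i}\cdot\Psi(1))\circ F_i,1\rangle_{L^2(\gpd_n)}$ via the measure isomorphism $F_i$ and the fact that $1$ is supported on $\gpd^0$, obtaining that \emph{each} diagonal term already equals $\trace_{\VN(\gpd)}(\Psi)$. No density argument is needed, and one gets the slightly stronger per-$i$ statement for free. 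Your approach trades this structural shortcut for a concrete convolution calculation plus a routine continuity step.

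There is, however, a slip in your second step. For $\Psi$ the image of $\phi\in\C[\gpd]$ under the right regular representation one has $\Psi(\chi_{A_i})=\chi_{A_i}\phi$, not $\phi\chi_{A_i}$: since $\Psi$ commutes with left convolution, $\Psi(\chi_{A_i})=\chi_{A_i}\cdot\Psi(1)=\chi_{A_i}\phi$. Under the paper's convention $(\phi\eta)(\gamma)=\sum_{\gamma_2\gamma_1=\gamma}\phi(\gamma_1)\eta(\gamma_2)$ this puts $\chi_{A_i}$ in the $\gamma_1$-slot, and your uniqueness argument breaks down: $A_i$ contains exactly one element with a prescribed \emph{range}, but $n$ elements with a prescribed \emph{source} $y\in Y_i$, namely $\varphi_{i,1}(y),\dots,\varphi_{i,n}(y)$. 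So your formula $(\phi\chi_{A_i})(\gamma)=\phi(s(\gamma))$ is a correct evaluation of the wrong product. Redoing the computation for $\chi_{A_i}\phi$ you will find, in agreement with the paper, that each term $\langle\chi_{A_i}\phi,\chi_{A_i}\rangle$ equals $\int_{\gpd^0}\phi\,d\mu$ rather than $n\int_{Y_i}\phi\,d\mu$; the sum over $i$ is of course the same, so once this is corrected your density argument in step three goes through unchanged.
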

    \begin{proof}
	    To begin with, this property needs only be checked on the dense subalgebra $\C[\gpd]$. So we assume that $\Psi\in\C[\gpd]$. We recall that 
        \[
        \trace_{\scalebox{0.8}{$\VN(\gpd_n)$}}(\res^{\scalebox{0.8}{$\gpd$}}_{\scalebox{0.8}{$\gpd_n$}}(\Psi))=\sum_{j=1}^n\trace_{\scalebox{0.8}{$\VN(\gpd_n)$}}(\res^{\scalebox{0.8}{$\gpd$}}_{\scalebox{0.8}{$\gpd_n$}}(\Psi)_{jj}).
        \]
        Let us compute $\trace_{\scalebox{0.8}{$\VN(\gpd_n)$}}(\res^{\scalebox{0.8}{$\gpd$}}_{\scalebox{0.8}{$\gpd_n$}}(\Psi)_{jj})$. First, $\res^{\scalebox{0.8}{$\gpd$}}_{\scalebox{0.8}{$\gpd_n$}}(\Psi)_{jj}(1)$ is given by taking the $j$th coordinate of $u^*\circ \Psi\circ u(0,\ldots,0, 1,0\ldots,0)$, where the identity function $1$ is in the $j$th position. Now $u$ maps $(0,\ldots, 0,1,0,\ldots 0)$ to $\chi_{A_j}$. So since $\Psi\in \VN(\gpd)$, by definition, we get that $\Psi(\chi_{A_j})=\chi_{A_j}\cdot\Psi(1)$. Thus $\res^{\scalebox{0.8}{$\gpd$}}_{\scalebox{0.8}{$\gpd_n$}}(\Psi)_{jj}(1)=(\chi_{A_j}\cdot\Psi(1))\circ F_j$. Hence, since $\gpd_n^j$ has full measure, $\res^{\scalebox{0.8}{$\gpd$}}_{\scalebox{0.8}{$\gpd_n$}}(\Psi)_{jj}(1)=\Psi(1)$ as $L^{\infty}(\gpd_n)$ functions by the definition of $F_j$. Therefore
        \[
        \trace_{\scalebox{0.8}{$\VN(\gpd_n)$}}(\res^{\scalebox{0.8}{$\gpd$}}(\Psi)_{jj})=\langle \res^{\scalebox{0.8}{$\gpd$}}(\Psi)_{jj}(1),1\rangle=\langle \Psi(1),1\rangle=\int_{\gpd^0}\Psi(x,1)d\mu(x,1),
        \]
        and so $\trace_{\scalebox{0.8}{$\VN(\gpd_n)$}}(\res^{\scalebox{0.8}{$\gpd$}}_{\scalebox{0.8}{$\gpd_n$}}(\Psi)_{ii})=\trace_{\scalebox{0.8}{$\VN(\gpd)$}}(\Psi)$, from which it follows the proof.
    \end{proof}

    We now show the multiplicative property of $L^2$-Betti numbers for these groupoids (see \cite[Proposition 5.11]{Gab02}).
        
    \begin{cor}\label{cor: multiplicity_betti_numb_gpds}
        For every non-negative integer $k$ it holds that $\beta_k^{(2)}(\gpd_n)=n\cdot \beta_k^{(2)}(\gpd)$.
    \end{cor}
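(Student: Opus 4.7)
The plan is to compute both sides of the desired equality from one and the same free resolution of $L^{\infty}(\gpd^0)$, and then to convert the dimensions using the trace scaling established in \cref{lem: rest_trace_is_multiplicative}. First I would fix a free resolution $P_\bullet \to L^{\infty}(\gpd^0) \to 0$ of $L^{\infty}(\gpd^0)$ as a left $\C[\gpd]$-module. By \cref{lem: gpd_ring_is_free_gpd_n_module} together with the remark immediately after it, $\C[\gpd]$ is free of rank $n$ over $\C[\gpd_n]$ on both sides, so each $P_i$ is free over $\C[\gpd_n]$ (of $n$ times the $\C[\gpd]$-rank). Exactness is preserved by restriction of scalars, and it is easy to check from the definitions that the augmentation of $\C[\gpd]$ restricts to that of $\C[\gpd_n]$; hence the same $P_\bullet$ is a free resolution of $L^{\infty}(\gpd^0)$ as a $\C[\gpd_n]$-module. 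Consequently
\[
\beta_k^{(2)}(\gpd)=\dim_{\scalebox{0.8}{$\VN(\gpd)$}} H_k\bigl(\VN(\gpd)\otimes_{\scalebox{0.8}{$\C[\gpd]$}} P_\bullet\bigr), \qquad \beta_k^{(2)}(\gpd_n)=\dim_{\scalebox{0.8}{$\VN(\gpd_n)$}} H_k\bigl(\VN(\gpd_n)\otimes_{\scalebox{0.8}{$\C[\gpd_n]$}} P_\bullet\bigr).
\]

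Next I would record the differentials of $P_\bullet$ as matrices $A_k$ over $\C[\gpd]$, and express the same maps with respect to the right $\C[\gpd_n]$-basis $\{\chi_{A_1},\ldots,\chi_{A_n}\}$ of $\C[\gpd]$ to obtain matrices $A_k^{(n)}$ over $\C[\gpd_n]$ of $n$ times the size, where each entry of $A_k$ is replaced by its $n\times n$ representation $\res^{\gpd}_{\gpd_n}(\cdot) \in \Mat_n(\VN(\gpd_n))$. The Sylvester matrix rank functions attached to the finite von Neumann algebras $\VN(\gpd)$ and $\VN(\gpd_n)$ are exact, so the homologies of the two complexes decompose into rank contributions of adjacent differentials, and the whole statement reduces to the single identity
\[
\rk_{\scalebox{0.8}{$\VN(\gpd_n)$}}\!\bigl(A^{(n)}\bigr) \;=\; n\cdot \rk_{\scalebox{0.8}{$\VN(\gpd)$}}(A)
\]
for any matrix $A$ with entries in $\C[\gpd]$.

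To establish this identity I would first treat a single element $\Psi \in \VN(\gpd)$: by polar decomposition, $\rk_{\VN(\gpd)}(\Psi)$ equals the trace of its support projection, and since $\res$ is a $*$-homomorphism it sends support projections to support projections. Combined with \cref{lem: rest_trace_is_multiplicative}, this gives $\rk_{\VN(\gpd_n)}(\res(\Psi)) = n\cdot \rk_{\VN(\gpd)}(\Psi)$. A block-matrix argument (embedding $\Mat_{p\times q}(\VN(\gpd))$ into $\Mat_{\max(p,q)}(\VN(\gpd))$ via padding by zeros and identifying $\Mat_p(\Mat_n(\VN(\gpd_n)))$ with $\Mat_{pn}(\VN(\gpd_n))$) then extends the scalar identity to arbitrary $A$. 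Substituting $\rk_{\VN(\gpd_n)}(A_k^{(n)}) = n\rk_{\VN(\gpd)}(A_k)$ into the two rank-difference expressions for the Betti numbers produces the factor of $n$ everywhere and the desired equality follows at once.

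The step I expect to be the most delicate is the bookkeeping when the ranks of the $P_i$ are infinite: the cardinal arithmetic $\rk(P_k) - \rk(A_k) - \rk(A_{k+1})$ is meaningless as written, so one has to replace it with the additivity and continuity properties of the bivariant extension from \cref{thm: SMRF_to_BSMRF}, working with finitely generated submodules and taking suprema. Once that technicality is handled, the multiplicativity is a transparent consequence of the trace identity.
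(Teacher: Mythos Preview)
Your approach is correct and follows the same underlying idea as the paper's proof: use a common free resolution (legitimised by \cref{lem: gpd_ring_is_free_gpd_n_module}) and derive the multiplicativity from the trace identity in \cref{lem: rest_trace_is_multiplicative}. The difference is one of packaging. You translate the trace identity into a rank identity $\rk_{\scalebox{0.8}{$\VN(\gpd_n)$}}(A^{(n)})=n\cdot\rk_{\scalebox{0.8}{$\VN(\gpd)$}}(A)$ on matrices and then feed this into a rank--nullity style computation of the homology dimension; the paper instead observes that the trace identity already gives $\dim_{\scalebox{0.8}{$\VN(\gpd)$}}(M)=\dim_{\scalebox{0.8}{$\VN(\gpd_n)$}}(M)/n$ for every \emph{finitely presented} $\VN(\gpd)$-module $M$, and then invokes the uniqueness of the bivariant extension (\cref{thm: SMRF_to_BSMRF}) to propagate this identity to all modules at once. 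After that, exactness of both dimensions applied to the same complex $\VN(\gpd)\otimes_{\scalebox{0.8}{$\C[\gpd]$}}P_\bullet$ (viewed simultaneously as a complex of $\VN(\gpd)$- and of $\VN(\gpd_n)$-modules) yields the result.

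The practical payoff of the paper's route is that the step you single out as delicate---the infinite-rank bookkeeping and the interpretation of expressions like $\rk(P_k)-\rk(A_k)-\rk(A_{k+1})$---simply disappears: once the module-level identity $\dim_{\scalebox{0.8}{$\VN(\gpd)$}}=\dim_{\scalebox{0.8}{$\VN(\gpd_n)$}}/n$ is known, the dimension of any subquotient (in particular each homology group) scales by $n$, regardless of whether the terms of the resolution have finite rank. Your argument is not wrong, and the continuity/additivity properties of the BSMRF you mention would indeed rescue it, but you are effectively reproving the uniqueness statement that the paper cites. If you want to tighten your write-up, you can replace the matrix-rank computation by the single sentence that the restriction of $\dim_{\scalebox{0.8}{$\VN(\gpd_n)$}}/n$ to $\VN(\gpd)$-modules is a BSMRF agreeing with $\dim_{\scalebox{0.8}{$\VN(\gpd)$}}$ on finitely presented modules, hence everywhere.
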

    
    \begin{proof}
        The equality of trace functions from \cref{lem: rest_trace_is_multiplicative} translates into the fact that for a finitely presented left $\VN(\gpd)$-module $M$
        \[
        \dim_{\scalebox{0.8}{$\VN(\gpd)$}}(M)=\frac{\dim_{\scalebox{0.8}{$\VN(\gpd_n)$}}(M)}{n}.
        \]
        Thus since the extension to a BSMRF is uniquely determined by the values on finitely presented modules (cf. \cite[Lemma 3.5]{LiSMRF}), $\dim_{\scalebox{0.8}{$\VN(\gpd)$}}(\cdot)=\dim_{\scalebox{0.8}{$\VN(\gpd_n)$}}(\cdot)/n$. In particular, by exactness of the SMRF, we get $\beta_k^{(2)}(\gpd_n)=n\cdot \beta_k^{(2)}(\gpd)$ for every non-negative integer $k$.
    \end{proof}

    On the other side, given $\Psi\in \VN(\gpd_n)$ there is a natural way to associate an operator $\overline{\Psi}\in \VN(\gpd)$. Specifically, as in \cref{lem: gpd_ring_is_free_gpd_n_module} given $\phi\in L^2(\gpd)$ we can decompose $\phi$ as $\chi_{A_1}\cdot \phi_{1}+\ldots+\chi_{A_n}\cdot \phi_{n}$ where $\phi_j\in L^2(\gpd_n)$ with support in $\gpd_n^j$ is defined by $\phi\circ F_j$. Thus we set $\overline{\Psi}(\phi):=\chi_{A_1}\cdot \Psi(\phi_{1})+\ldots+\chi_{A_n}\cdot \Psi(\phi_{n})\in L^2(\gpd)$. We now show that in this case the trace is also preserved.

    \begin{lem}\label{lem: gpd_n_is_trace_preserving}
        With the above notation, $\trace_{\scalebox{0.8}{$\VN(\gpd_n)$}}(\Psi)=\trace_{\scalebox{0.8}{$\VN(\gpd)$}}(\overline{\Psi})$.
    \end{lem}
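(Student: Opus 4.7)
The plan is to reduce the statement to the matrix trace formula already established in \cref{lem: rest_trace_is_multiplicative} by identifying $\overline{\Psi}$ with the block-diagonal operator $\Diag(\Psi,\ldots,\Psi)$ under the unitary $u\colon \bigoplus_{i=1}^n L^2(\gpd_n)\rightarrow L^2(\gpd)$ introduced before \cref{lem: rest_trace_is_multiplicative}.

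First I would unpack the definition of $\overline{\Psi}$ in terms of $u$ and its adjoint. Given $\phi\in L^2(\gpd)$, the decomposition $\phi=\chi_{A_1}\cdot \phi_1+\ldots+\chi_{A_n}\cdot \phi_n$ with $\phi_i=\phi\circ F_i$ is precisely the one encoded by the unitary equivalence $L^2(\gpd)\cong \bigoplus_i L^2(\gpd_n)$: the map $u$ sends $(\phi_1,\ldots,\phi_n)$ to $\sum_i\chi_{A_i}\cdot\phi_i$, while $u^*$ sends $\phi$ to $(\phi\circ F_1,\ldots,\phi\circ F_n)$. Since the $F_i(\gpd_n)$ partition $\gpd$ up to null sets and each $F_i$ is a measure isomorphism onto its image, $u$ is unitary, so the two descriptions are mutual inverses. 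Under this identification, the definition of $\overline{\Psi}$ reads exactly as $\overline{\Psi}=u\circ\Diag(\Psi,\ldots,\Psi)\circ u^*$.

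Next I would compute $\res^{\scalebox{0.8}{$\gpd$}}_{\scalebox{0.8}{$\gpd_n$}}(\overline{\Psi})=u^*\circ\overline{\Psi}\circ u=\Diag(\Psi,\ldots,\Psi)$. Applying \cref{lem: rest_trace_is_multiplicative} to $\overline{\Psi}\in \VN(\gpd)$ gives
\[
n\cdot \trace_{\scalebox{0.8}{$\VN(\gpd)$}}(\overline{\Psi})=\trace_{\scalebox{0.8}{$\VN(\gpd_n)$}}\!\bigl(\Diag(\Psi,\ldots,\Psi)\bigr)=\sum_{i=1}^n \trace_{\scalebox{0.8}{$\VN(\gpd_n)$}}(\Psi)=n\cdot \trace_{\scalebox{0.8}{$\VN(\gpd_n)$}}(\Psi),
\]
and cancelling $n$ yields the desired equality.

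The only potentially subtle point is verifying the factorisation $\overline{\Psi}=u\circ \Diag(\Psi,\ldots,\Psi)\circ u^*$, since it tacitly uses that the convolution $\chi_{A_i}\cdot\eta$ for $\eta\in \C[\gpd_n]$ agrees with the pushforward of $\eta$ along $F_i$ extended by zero. This is essentially the computation already carried out in the proof of \cref{lem: gpd_ring_is_free_gpd_n_module}: writing out the convolution product and using that, up to null sets, every element $\gamma'\in F_i(\gpd_n)$ admits a unique decomposition $\gamma'=\gamma_2\,\varphi_{j,i}(s(\beta))$ with $\gamma_2\in\gpd_n$ and $\varphi_{j,i}(s(\beta))\in A_i$, while elements outside $F_i(\gpd_n)$ admit no such decomposition. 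Once this identification is in place, the rest of the argument is a direct application of \cref{lem: rest_trace_is_multiplicative}.
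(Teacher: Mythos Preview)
Your argument is correct and takes a genuinely different route from the paper. The paper proves the lemma by a direct hands-on computation: it expands $\trace_{\scalebox{0.8}{$\VN(\gpd)$}}(\overline{\Psi})=\langle\overline{\Psi}(1),1\rangle$, works out $1\circ F_i=\chi_{\varphi_{i,i}(Y_i)^{-1}}$, and then verifies the convolution identity $\chi_{\varphi_{i,i}(Y_i)}\cdot\chi_{A_i}^{*}=\chi_{Y_i}$ by a pointwise analysis before summing over $i$. You instead recognise that the very definition of $\overline{\Psi}$ amounts to $\overline{\Psi}=u\circ\Diag(\Psi,\ldots,\Psi)\circ u^{*}$, whence $\res^{\scalebox{0.8}{$\gpd$}}_{\scalebox{0.8}{$\gpd_n$}}(\overline{\Psi})=\Diag(\Psi,\ldots,\Psi)$, and then feed this into \cref{lem: rest_trace_is_multiplicative} to obtain the result after dividing by $n$. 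Your route is cleaner and more conceptual: it exploits that \cref{lem: rest_trace_is_multiplicative} has already absorbed the delicate convolution bookkeeping, so there is no need to redo it. The paper's direct approach, on the other hand, is self-contained and makes the equality visible at the level of the integral formulas for the trace without invoking the restriction map. The only point worth making explicit in your write-up is that $\overline{\Psi}\in\VN(\gpd)$ (needed to apply \cref{lem: rest_trace_is_multiplicative}); this is asserted in the paragraph preceding the lemma, but since it follows immediately from $\overline{\Psi}=u\,\Diag(\Psi,\ldots,\Psi)\,u^{*}$ and the left $\C[\gpd_n]$-equivariance of $\Psi$ together with the module decomposition of \cref{lem: gpd_ring_is_free_gpd_n_module}, you could note it in passing.
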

    \begin{proof}
        To begin with, this property needs only be checked on the dense subalgebra $\C[\gpd_n]$. So we assume that $\Psi\in\C[\gpd_n]$. We recall that $\trace_{\scalebox{0.8}{$\VN(\gpd)$}}(\overline{\Psi})=\langle \overline{\Psi}(1),1\rangle$.
        Observe that $1\circ F_j(\gamma)=1(x,g)$ where $\gamma=\gamma_{x,g}\in \gpd_n^j$. Hence, this is non-zero only if $g=1$, that is, if $(x,1)=\gamma\varphi_{j,j}(x,1)$ where $(x,1)\in Y_{j,j}^1$. So, if we set $\varphi_{j,j}(Y_{j,j}^1)^{-1}:=\{\varphi_{j,j}(x,1)^{-1}:(x,1)\in Y_{j,j}^1\}$, which is Borel by injectivity, then $1\circ F_j=\chi_{\varphi_{j,j}(Y_{j,j}^1)^{-1}}$. So by definition of $\VN(\gpd_n)$ 
        \begin{align*}
            \overline{\Psi}(1)&=\chi_{A_1}\cdot \Psi(\chi_{\varphi_{1,1}(Y_{1,1}^1)^{-1}})+\ldots+\chi_{A_n}\cdot \Psi(\chi_{\varphi_{n,n}(Y_{n,n}^1)^{-1}})\\
            &=\chi_{A_1}\cdot\chi_{\varphi_{1,1}(Y_{1,1}^1)^{-1}}\Psi(1)+\ldots+\chi_{A_n}\cdot \chi_{\varphi_{n,n}(Y_{n,n}^1)^{-1}}\Psi(1).
        \end{align*}
        Hence,
	    \begin{align*}
            \trace_{\scalebox{0.8}{$\VN(\gpd)$}}(\overline{\Psi})&=\sum_{j=1}^n \langle \chi_{A_j}\cdot\chi_{\varphi_{j,j}(Y_{j,j}^1)^{-1}}\Psi(1),1\rangle\\
		  &=\sum_{j=1}^n\langle \Psi(1), \chi_{\varphi_{j,j}(Y_{j,j}^1)}\cdot \chi_{A_j}^{*}\rangle.
	    \end{align*}
        We claim that $\chi_{\varphi_{j,j}(Y_{j,j}^1)}\cdot \chi_{A_j}^{*}=\chi_{Y_j}$. Indeed,
        \begin{align*}
            \chi_{\varphi_{j,j}(Y_{j,j}^1)}\cdot \chi_{A_j}^{*}(x,g)&=\sum_{(x_2,g_2)(x_1,g_1)=(x,g)}\chi_{\varphi_{j,j}(Y_{j,j}^1)}(x_1,g_1)\chi_{A_j}^*(x_2,g_2)\\
            &=\sum_{g_1\in G}\chi_{\varphi_{j,j}(Y_{j,j}^1)}(x,g_1)\chi_{A_j}((g_1\cdot x,gg_1^{-1})^{-1})\\
		  &=\sum_{g_1\in G}\chi_{\varphi_{j,j}(Y_{j,j}^1)}(x,g_1)\chi_{A_j}(g\cdot x,g_1g^{-1}).
	    \end{align*}
        Note that to obtain a non-zero value we need that both $(x,g_1)=\varphi_{j,j}(x,1)$ for some $(x,1)\in Y_{j,j}^1$ and $(g\cdot x, g_1g^{-1})=\varphi_{j,i}(g\cdot x,1)$ for some $(g\cdot x, 1)\in Y_{j,i}^1$. Now observe that $r(x,g_1)=r(g\cdot x, g_1g^{-1})$; thus $i=j$ and moreover, by the injectivity of $r\circ \varphi_{j,j}$, we get that $g\cdot x =x$. In particular,
        \[
        (x,g_1)=\varphi_{j,j}(x,1)=\varphi_{j,j}(g\cdot x,1)=(g\cdot x, g_1g^{-1}),
        \]
        that is, $g=1$. Hence, putting all together, $\chi_{\varphi_{j,j}(Y_{j,j}^1)}\cdot \chi_{A_j}^{*}= \chi_{Y_{j,j}^1}$, which agrees with $\chi_{Y_j}$ since $Y_{j,j}^1$ is a Borel subset of $Y_j$ with full measure. 
        
        Therefore
        \[
        \sum_{j=1}^n\langle \Psi(1), \chi_{\varphi_{j,j}(Y_{j,j}^1)}\cdot \chi_{A_j}^{*}\rangle=\sum_{j=1}^n\langle \Psi(1), \chi_{Y_j}\rangle=\langle \Psi(1),1\rangle
        \]
        which ends the proof.
    \end{proof}

    \begin{rem}
        Adapting the standard argument for groups to this setting, one shows that $\VN(\Hgpd)\hookrightarrow \VN(\gpd)$ is a trace-preserving $*$-homomorphism too.    
    \end{rem}
    
    It is well known that $L^2$-Betti numbers of $G$ and $\gpd$ coincide (see \cite[Section 4.5]{SauerThom}). However, the results of \cref{sec: hom_alg_dim_theory} enable us to reproduce the proof given by Sauer in \cite[Theorem 5.5]{SauerGroupoids} without restricting ourselves to groups acting essentially free on a probability space. When using the results from \cref{sec: hom_alg_dim_theory} there is always a list of facts that needs to be checked. The aim of the next lemma is to gather all the hypothesis that shall be verified in future applications of these dimension theorems. Here $N$ stands for the normal subgroup contained in $H$ from \cref{thm: groupoids_beta_k}.

    \begin{lem}\label{lem: dim_isomph_gpd_ring_crossed_ring}
        \begin{enumerate}[label=(\roman*)]
            \item The inclusions $L^{\infty}(\gpd^0)\ast G\subseteq \C[\gpd]$, $L^{\infty}(\gpd^0)\ast H\subseteq \C[\Hgpd]$ and $L^{\infty}(\gpd^0)\ast N\subseteq \C[X_{G,H}\rtimes N]$ are left $\dim_{\scalebox{0.8}{$\essbdd$}}$-isomorphisms.
            \item $\VN(\gpd)$ is $\dim_{\scalebox{0.8}{$\VN(\gpd)$}}$-$\dim_{\scalebox{0.8}{$\essbdd$}}$-compatible and $\VN(\gpd_n)$ is $\dim_{\scalebox{0.8}{$\VN(\gpd_n)$}}$-$\dim_{\scalebox{0.8}{$\essbdd$}}$-compatible.
            \item $\C[\gpd],\C[\gpd_n],\C[\Hgpd]$ and $\C[X_{G,H}\rtimes N]$ are $\dim_{\scalebox{0.8}{$\essbdd$}}$-$\dim_{\scalebox{0.8}{$\essbdd$}}$-compatible.
            \item $\VN(\gpd),\C[\gpd],\VN(\gpd_n),\C[\gpd_n],\VN(\Hgpd),\C[\Hgpd]$ and $\C[X_{G,H}\rtimes N]$ are all flat as right $\essbdd$-modules.
        \end{enumerate}
    \end{lem}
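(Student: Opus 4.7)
The plan is to address the four parts in the order (iv), (ii), (i), (iii), since each naturally builds on the preceding. The key observation powering (iv) and (ii) is that the inclusion $\essbdd\hookrightarrow \VN(\cdot)$ is a trace-preserving $*$-homomorphism for each of the four finite von Neumann algebras $\VN(\gpd),\VN(\gpd_n),\VN(\Hgpd),\VN(X_{G,H}\rtimes N)$: the trace of an element $f\in\essbdd$, computed in the groupoid ring, is $\int_{\gpd^0}f\,d\mu$, which is precisely the canonical trace on $\essbdd$. Hence \cref{lem: trace_preserving_flat} gives that each $\VN(\cdot)$ is (faithfully) flat as a right $\essbdd$-module, and \cref{lem: dim_extension_for_finite_vNa} gives $\dim_{\scalebox{0.8}{$\VN(\gpd)$}}(\VN(\gpd)\otimes_{\essbdd}M)=\dim_{\scalebox{0.8}{$\essbdd$}}(M)$ for every left $\essbdd$-module $M$ (and likewise for $\VN(\gpd_n)$), which settles (ii). The flatness part of (iv) for the corresponding groupoid rings follows because $\essbdd$ is a commutative von Neumann algebra, hence semihereditary by \cref{thm: von_Neumann_alg_is_semihered}, so \cref{prop: submod_flat_is_flat_if_semihered} ensures that any right $\essbdd$-submodule of the flat $\VN(\cdot)$ is flat, and each $\C[\cdot]$ sits inside its corresponding $\VN(\cdot)$.

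For (i), I apply \cref{lem: gpd_ring_are_finite_sums} to reduce to estimating the $\essbdd$-dimension of the class $[\chi_E]$ in the cokernel $\C[\gpd]/(L^{\infty}(\gpd^0)\ast G)$ for every partial bisection $E\subseteq \gpd$ (a Borel subset on which both source and range are injective). Writing $E=\bigsqcup_{g\in G}E_g\times\{g\}$ with $E_g\subseteq X_{G,H}$, injectivity of $s|_E$ gives that the $E_g$ are pairwise disjoint, injectivity of $r|_E$ gives that the $g\cdot E_g$ are pairwise disjoint, and the $G$-invariance of $\mu$ together with $\mu(X_{G,H})=1$ yields
\[
\sum_{g\in G}\mu(E_g)=\mu(r(E))\leq 1.
\]
Given $\varepsilon>0$, choose a finite $F\subseteq G$ with $\sum_{g\notin F}\mu(E_g)<\varepsilon$. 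The truncation $\chi_{E\cap(X_{G,H}\times F)}$ lies in the image of $\essbdd\ast G$, so in the cokernel $[\chi_E]=[\chi_{E\setminus(X_{G,H}\times F)}]$; the cyclic submodule $\essbdd\cdot[\chi_E]$ is then dominated by $\essbdd\cdot\chi_{r(E\setminus(X_{G,H}\times F))}$, whose $\essbdd$-dimension equals $\sum_{g\notin F}\mu(E_g)<\varepsilon$. Letting $\varepsilon\to 0$ and using exactness of $\dim_{\scalebox{0.8}{$\essbdd$}}$ together with \cref{lem: dim_directed_union} finishes the claim. The argument is identical for the inclusions $\essbdd\ast H\hookrightarrow \C[\Hgpd]$ and $\essbdd\ast N\hookrightarrow \C[X_{G,H}\rtimes N]$.

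For (iii), I treat all four rings uniformly through the partial bisection description of \cref{lem: gpd_ring_are_finite_sums}. For a partial bisection $E$ in any of the four groupoids, with $A=r(E)$ and $B=s(E)$, the map $\sigma=r\circ (s|_E)^{-1}:B\to A$ is a measure-preserving Borel isomorphism (this is where the hypothesis $\mu_s=\mu_r$ of a discrete measured groupoid enters). A direct computation shows that $\essbdd\chi_E$, as an $\essbdd$-bimodule, is isomorphic to $\essbdd\chi_A$ with the natural left action by pointwise multiplication and right action twisted through $\sigma$, namely $\chi_A\cdot g=(g\circ\sigma^{-1})\chi_A$. Consequently, for any left $\essbdd$-module $M$ with $\dim_{\scalebox{0.8}{$\essbdd$}}(M)=0$, the tensor product $\essbdd\chi_E\otimes_{\essbdd}M$ identifies with the $\sigma$-pushforward of $\chi_B M$ to $\chi_A\cdot M_{\sigma}$; since $\sigma$ is measure-preserving, the latter has the same $\essbdd$-dimension as $\chi_B M$, which is bounded by $\dim_{\scalebox{0.8}{$\essbdd$}}(M)=0$. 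Writing the groupoid ring as the directed union of its finitely generated $\essbdd$-sub-bimodules (each a finite sum of such $\essbdd\chi_E$'s) and invoking additivity on finite sums together with \cref{lem: dim_directed_union} delivers $\dim_{\scalebox{0.8}{$\essbdd$}}(\C[\cdot]\otimes_{\essbdd}M)=0$.

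The genuine technical point is the bimodule analysis inside (iii): one must correctly identify the twist $\sigma$ attached to each partial bisection and verify that pushforward by a measure-preserving Borel map preserves $\dim_{\scalebox{0.8}{$\essbdd$}}$. Everything else reduces to bookkeeping with the preliminaries collected in \cref{sec: prelims,sec: hom_alg_dim_theory}.
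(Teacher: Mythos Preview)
Your proof is correct and follows essentially the same route as the paper's, which simply defers (i) to \cite[Lemma 5.4]{SauerGroupoids}, (ii) to \cref{lem: dim_extension_for_finite_vNa}, (iii) to \cite[Lemma 4.8]{SauerGroupoids}, and (iv) to the trace-preserving/semihereditary argument you give; you have merely unpacked those citations. One small point of care in (iii): with the paper's convolution convention $(\phi\eta)(\gamma)=\sum_{\gamma_2\gamma_1=\gamma}\phi(\gamma_1)\eta(\gamma_2)$, left multiplication by $f\in\essbdd$ acts via $f\circ s$ and right multiplication via $f\circ r$, so the natural identification is $\essbdd\chi_E\cong\essbdd\chi_B$ (with $B=s(E)$) carrying the untwisted left action and the $\sigma$-twisted right action, rather than $\essbdd\chi_A$ as you wrote; this is harmless since $\mu(A)=\mu(B)$ and the conclusion that the $\sigma$-twist preserves $\dim_{\scalebox{0.8}{$\essbdd$}}$ is unaffected.
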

    \begin{proof}
        First, {\it (i)} follows by rewriting \cite[Lemma 5.4]{SauerGroupoids}. Second, {\it (ii)} is due to \cref{prop: dim_extension_for_finite_vNa}. Third, {\it (iii)} is shown as in \cite[Lemma 4.8]{SauerGroupoids}. Finally, {\it (iv)} follows from the fact that the corresponding von Neumann algebra is trace-preserving and hence flat by \cref{prop: trace_preserving_flat}. To conclude, since $\essbdd$ is a semihereditary ring according to \cref{thm: von_Neumann_alg_is_semihered}, submodules of flat modules are flat by \cref{prop: submod_flat_is_flat_if_semihered}.
    \end{proof}
		
    Henceforth, we shall use results from \cref{sec: hom_alg_dim_theory} without checking them explicitly; and thus we refer the reader to the above lemma in case of doubt.
  
    \begin{lem}\label{lem: equality_betti_numb_gps_gpds}
        For every non-negative integer $k$ it holds that
        \[
        \beta_k^{(2)}(\gpd)=\beta_k^{(2)}(G).
        \]
    \end{lem}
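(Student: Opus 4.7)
The plan is to bridge $\C[G]$ and $\C[\gpd]$ through the intermediate crossed product ring $L^{\infty}(\gpd^0) * G$, combining Shapiro's lemma with the change-of-scalars theorem \cref{thm: dim_isomph_changing_scalars}. The argument proceeds in three steps: lift the computation of $\beta_k^{(2)}(G)$ from $\VN(G)$- to $\VN(\gpd)$-dimensions via the trace-preserving embedding of \cref{prop: VN_gpd_is_trace_preserving}; swap the scalar ring from $\C[G]$ up to $L^{\infty}(\gpd^0)*G$ via Shapiro's lemma; and finally swap from $L^{\infty}(\gpd^0)*G$ up to $\C[\gpd]$ via \cref{thm: dim_isomph_changing_scalars}.

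For the first step, \cref{lem: trace_preserving_flat} together with \cref{lem: dim_extension_for_finite_vNa} and the faithful flatness of $\VN(\gpd)$ over $\VN(G)$ yield
\[
\beta_k^{(2)}(G)=\dim_{\scalebox{0.8}{$\VN(\gpd)$}}\!\left(\Tor_k^{\scalebox{0.8}{$\C[G]$}}(\VN(\gpd), \C)\right).
\]
For the second step, observe that $L^{\infty}(\gpd^0)*G$ is free as a right $\C[G]$-module: choosing any $\C$-basis $\{e_i\}$ of $L^{\infty}(\gpd^0)$, the family $\{e_i\}_i$ is a right $\C[G]$-basis. Shapiro's lemma then produces
\[
\Tor_k^{\scalebox{0.8}{$\C[G]$}}(\VN(\gpd), \C)\cong \Tor_k^{\scalebox{0.8}{$L^{\infty}(\gpd^0)*G$}}\!\left(\VN(\gpd), L^{\infty}(\gpd^0)\right),
\]
using the identification $(L^{\infty}(\gpd^0)*G)\otimes_{\scalebox{0.8}{$\C[G]$}}\C \cong L^{\infty}(\gpd^0)$ (since $G$ acts trivially on $\C$). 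A direct convolution computation shows that the resulting $L^{\infty}(\gpd^0)*G$-action on $L^{\infty}(\gpd^0)$---namely translation by $G$ combined with pointwise multiplication---coincides with the restriction of the natural $\C[\gpd]$-module structure on $L^{\infty}(\gpd^0)$ coming from the augmentation homomorphism $\epsilon$.

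For the third step, apply \cref{thm: dim_isomph_changing_scalars} with $R=L^{\infty}(\gpd^0)$, $S_1=L^{\infty}(\gpd^0)*G$, $S_2=\C[\gpd]$, $T=\VN(\gpd)$, and bimodule $L=\VN(\gpd)$: the four required inputs---that $S_1\subseteq S_2$ is a $\dim_{\scalebox{0.8}{$L^{\infty}(\gpd^0)$}}$-isomorphism, that $S_2$ and $L$ are flat as right $R$-modules, and that $S_2$ and $L$ are appropriately dimension-compatible---are exactly the content of \cref{lem: dim_isomph_gpd_ring_crossed_ring}. One thereby obtains a $\dim_{\scalebox{0.8}{$\VN(\gpd)$}}$-isomorphism between the Tor above and $\Tor_k^{\scalebox{0.8}{$\C[\gpd]$}}(\VN(\gpd), L^{\infty}(\gpd^0))$, whose dimension is $\beta_k^{(2)}(\gpd)$ by definition. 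The main obstacle---and the reason \cref{sec: hom_alg_dim_theory} is developed---is precisely this last change-of-scalars step: the inclusion $L^{\infty}(\gpd^0)*G\subseteq \C[\gpd]$ is generally not flat, so Shapiro's lemma is unavailable and one is forced to reason at the level of dimension-isomorphisms rather than honest isomorphisms.
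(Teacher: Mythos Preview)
Your proof is correct and follows essentially the same three-step approach as the paper: lift from $\VN(G)$- to $\VN(\gpd)$-dimension via the trace-preserving embedding, pass from $\C[G]$ to $L^{\infty}(\gpd^0)*G$ via Shapiro's lemma, and then from $L^{\infty}(\gpd^0)*G$ to $\C[\gpd]$ via \cref{thm: dim_isomph_changing_scalars}. Your write-up is in fact slightly more detailed than the paper's, spelling out why $L^{\infty}(\gpd^0)*G$ is free over $\C[G]$ and why the induced module structure on $L^{\infty}(\gpd^0)$ agrees with the augmentation action.
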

    \begin{proof}
        According to \cref{prop: VN_gpd_is_trace_preserving}, \cref{prop: dim_extension_for_finite_vNa} and \cref{prop: trace_preserving_flat}
        \[
        \beta_k^{(2)}(G)=\dim_{\scalebox{0.8}{$\VN(G)$}}\left(\Tor_k^{\scalebox{0.8}{$\C[G]$}}(\VN(G),\mathbb{C})\right)=\dim_{\scalebox{0.8}{$\VN(\gpd)$}}\left(\Tor_k^{\scalebox{0.8}{$\C[G]$}}(\VN(\gpd),\mathbb{C})\right).
        \]
        Besides, $L^{\infty}(\gpd^0)\ast G$ is a free right $\C[G]$-module; thus by Shapiro's lemma
        \[
        \dim_{\scalebox{0.8}{$\VN(\gpd)$}}\left(\Tor_k^{\scalebox{0.8}{$\C[G]$}}(\VN(\gpd),\mathbb{C})\right)
        \]
        equals to
        \[
        \dim_{\scalebox{0.8}{$\VN(\gpd)$}}\left(\Tor_k^{\scalebox{0.8}{$L^{\infty}(\gpd^0)\ast G$}}(\VN(\gpd),L^{\infty}(\gpd^0)\ast G\otimes_{\scalebox{0.8}{$\C[G]$}}\mathbb{C})\right),
        \]
        which is precisely
        \[
        \dim_{\scalebox{0.8}{$\VN(\gpd)$}}\left(\Tor_k^{\scalebox{0.8}{$L^{\infty}(\gpd^0)\ast G$}}(\VN(\gpd),L^{\infty}(\gpd^0))\right)
        \]
        according to the left $\C[\gpd]$-module structure of $L^{\infty}(\gpd^0)$. Finally we apply \cref{thm: dim_isomph_changing_scalars} for $L^{\infty}(\gpd^0)\ast G$ and $\C[\gpd]$; and hence
        \begin{align*}
            \dim_{\scalebox{0.8}{$\VN(\gpd)$}}\left(\Tor_k^{\scalebox{0.8}{$L^{\infty}(\gpd^0)\ast G$}}(\VN(\gpd),L^{\infty}(\gpd^0))\right)&=\dim_{\scalebox{0.8}{$\VN(\gpd)$}}\left(\Tor_k^{\scalebox{0.8}{$\C[\gpd]$}}(\VN(\gpd),L^{\infty}(\gpd^0))\right)\\
            &=\beta_k^{(2)}(\gpd).
        \end{align*}
	which ends the proof.
    \end{proof}

    \begin{rem}
        Note that the proof still works for a general translation groupoid $X\rtimes G$.
    \end{rem}
  
    To conclude this section we state a dimension equality with the flavour of Shapiro's lemma which will be of great importance.
  
    \begin{prop}\label{prop: shapiro_dim_for_gpds}
	For every non-negative integer $k$ it holds that
        \[
        \beta_k^{(2)}(H)=\dim_{\scalebox{0.8}{$\VN(\gpd_n)$}} \left(\Tor_k^{\scalebox{0.8}{$\C[\gpd_n]$}}(\VN(\gpd_n), \C[\gpd_n]\otimes_{\scalebox{0.8}{$\C[\Hgpd]$}}L^{\infty}(\gpd^0))\right).
        \]
    \end{prop}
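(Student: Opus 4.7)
The plan is to prove this as the groupoid analogue of \cref{lem: beta_H_as_dimG}, with the pair $\Hgpd \subseteq \gpd_n$ playing the role of $H \leq G$ and $L^{\infty}(\gpd^0)$ replacing the trivial module $\C$. The proof proceeds in three steps: (i) identify $\beta_k^{(2)}(H)$ with the $\Hgpd$-Betti number, (ii) enlarge the coefficient von Neumann algebra from $\VN(\Hgpd)$ to $\VN(\gpd_n)$, and (iii) apply Shapiro's lemma to the ring inclusion $\C[\Hgpd] \subseteq \C[\gpd_n]$.

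For (i), \cref{lem: equality_betti_numb_gps_gpds}, which by the accompanying remark applies to an arbitrary translation groupoid, yields
\[
\beta_k^{(2)}(H) = \beta_k^{(2)}(\Hgpd) = \dim_{\scalebox{0.8}{$\VN(\Hgpd)$}}\bigl(\Tor_k^{\scalebox{0.8}{$\C[\Hgpd]$}}(\VN(\Hgpd), L^{\infty}(\gpd^0))\bigr).
\]
For (ii), the inclusion $\Hgpd \subseteq \gpd_n$ yields a trace-preserving $*$-embedding $\VN(\Hgpd) \hookrightarrow \VN(\gpd_n)$ by the same argument used in the remark preceding this statement to obtain $\VN(\Hgpd) \hookrightarrow \VN(\gpd)$. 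Then \cref{lem: trace_preserving_flat} provides flatness of $\VN(\gpd_n)$ over $\VN(\Hgpd)$ (so one may commute the tensor product with $\Tor$), and \cref{lem: dim_extension_for_finite_vNa} preserves the dimension under the extension, leading to
\[
\beta_k^{(2)}(H) = \dim_{\scalebox{0.8}{$\VN(\gpd_n)$}}\bigl(\Tor_k^{\scalebox{0.8}{$\C[\Hgpd]$}}(\VN(\gpd_n), L^{\infty}(\gpd^0))\bigr).
\]

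Step (iii) applies Shapiro's lemma to $\C[\Hgpd] \subseteq \C[\gpd_n]$ to rewrite the right-hand side as $\dim_{\scalebox{0.8}{$\VN(\gpd_n)$}}\Tor_k^{\scalebox{0.8}{$\C[\gpd_n]$}}(\VN(\gpd_n), \C[\gpd_n] \otimes_{\scalebox{0.8}{$\C[\Hgpd]$}} L^{\infty}(\gpd^0))$, closing the proof. This hinges on $\C[\gpd_n]$ being flat as a right $\C[\Hgpd]$-module, which is the main technical content. To establish this, fix right coset representatives $\{g_\alpha\}_{\alpha \in H\backslash G}$ of $H$ in $G$ and set $F_\alpha := \{(x, g_\alpha) \in \gpd_n\}$. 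Because $h^{-1}H = H$ for $h \in H$, the elements $gx$ and $g_\alpha x$ share the $H$-coset coordinate whenever $g = hg_\alpha$; so any $(x, g) \in \gpd_n$ with $x, gx \in Y_i$ automatically satisfies $g_\alpha x \in Y_i$, whence $(x, g_\alpha) \in F_\alpha$. This produces the disjoint decomposition $\gpd_n = \bigsqcup_\alpha \Hgpd \cdot F_\alpha$, mirroring the one underlying \cref{lem: gpd_ring_is_free_gpd_n_module}. Each $\chi_{F_\alpha}$ is a partial isometry with $\chi_{F_\alpha}^{*}\chi_{F_\alpha}, \chi_{F_\alpha}\chi_{F_\alpha}^{*} \in \essbdd \subseteq \C[\Hgpd]$, so $\chi_{F_\alpha} \cdot \C[\Hgpd]$ is projective as a right $\C[\Hgpd]$-module.

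The principal obstacle is that $[\gpd_n:\Hgpd] = \infty$, so unlike in \cref{lem: gpd_ring_is_free_gpd_n_module} one cannot work with a finite basis: a general $\phi \in \C[\gpd_n]$ may have nonzero support in infinitely many cosets, and the evident direct sum $\bigoplus_\alpha \chi_{F_\alpha} \cdot \C[\Hgpd]$ is strictly contained in $\C[\gpd_n]$. One must therefore carefully exploit the essential-boundedness conditions $R(\phi), S(\phi) \in \essbdd$ imposed on elements of $\C[\gpd_n]$ to upgrade the freeness of this direct sum to flatness of the full module, so that Shapiro's lemma can be invoked and the three-step chain of equalities concluded.
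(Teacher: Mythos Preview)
Your outline is sound through steps (i) and (ii), but step (iii) contains a genuine gap that you yourself flag and do not close: you need $\C[\gpd_n]$ to be flat as a right $\C[\Hgpd]$-module in order to invoke Shapiro's lemma, and you do not prove this. Your coset decomposition $\gpd_n = \bigsqcup_\alpha \Hgpd \cdot F_\alpha$ is correct, and each $\chi_{F_\alpha}\cdot\C[\Hgpd]$ is indeed projective, but since $[G:H]=\infty$ the direct sum $\bigoplus_\alpha \chi_{F_\alpha}\cdot\C[\Hgpd]$ is a proper submodule of $\C[\gpd_n]$. The sentence ``one must therefore carefully exploit the essential-boundedness conditions \ldots\ to upgrade the freeness of this direct sum to flatness of the full module'' is a description of work to be done, not an argument; nothing in the paper supplies such a flatness result, and it is not clear that it holds.

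The paper circumvents this difficulty entirely by routing the computation through the crossed product rings, where freeness is automatic. Concretely, it first passes from $\VN(\Hgpd)$ to $\VN(\gpd)$ (not $\VN(\gpd_n)$), then uses \cref{thm: dim_isomph_changing_scalars} to replace $\C[\Hgpd]$ by $L^\infty(\gpd^0)\ast H$, applies the honest Shapiro lemma for the free extension $L^\infty(\gpd^0)\ast H \subseteq L^\infty(\gpd^0)\ast G$, and then uses \cref{lem: dim_isomph_tensor_right}, \cref{lem: dim_isomph_tensor_left} and \cref{thm: dim_isomph_changing_scalars} again to move back to the groupoid rings. Only at the end does it descend from $\gpd$ to $\gpd_n$, and there it uses \cref{lem: gpd_ring_is_free_gpd_n_module}: $\C[\gpd]$ is free over $\C[\gpd_n]$ precisely because that extension has \emph{finite} index $n$. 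The point is that the dimension-isomorphism machinery of \cref{sec: hom_alg_dim_theory} lets one trade genuine flatness for $\dim_{L^\infty(\gpd^0)}$-isomorphisms, which suffice for the dimension equality being claimed; your direct approach would need actual flatness, which is a strictly stronger (and unproven) statement.
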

    \begin{proof}
        First, recall that due to \cref{lem: equality_betti_numb_gps_gpds} $\beta_k^{(2)}(H)=\beta_k^{(2)}(\Hgpd)$. Besides, according to \cref{prop: trace_preserving_flat} $\VN(\gpd)$ is flat as right $\VN(\Hgpd)$-module. Hence by \cref{prop: dim_extension_for_finite_vNa}
        \[
        \beta_k^{(2)}(\Hgpd)=\dim_{\scalebox{0.8}{$\VN(\gpd)$}}\left(\Tor_n^{\scalebox{0.8}{$\C[\Hgpd]$}}(\VN(\gpd),L^{\infty}(\gpd^0))\right).
        \]
        We claim that by dimension flatness considerations
        \[
        \beta_k^{(2)}(\Hgpd)=\dim_{\scalebox{0.8}{$\VN(\gpd)$}}\left(\Tor_k^{\scalebox{0.8}{$\C[\gpd]$}}(\VN(\gpd),\C[\gpd]\otimes_{\scalebox{0.8}{$\C[\Hgpd]$}} L^{\infty}(\gpd^0))\right).
        \]
        Indeed, by \cref{thm: dim_isomph_changing_scalars}
        \[
        \dim_{\scalebox{0.8}{$\VN(\gpd)$}}\left(\Tor_k^{\scalebox{0.8}{$\C[\Hgpd]$}}(\VN(\gpd),L^{\infty}(\gpd^0))\right)=\dim_{\scalebox{0.8}{$\VN(\gpd)$}}\left(\Tor_k^{\scalebox{0.8}{$L^{\infty}(\gpd^0)\ast H$}}(\VN(\gpd),L^{\infty}(\gpd^0))\right).
        \]
        In addition, $L^{\infty}(\gpd^0)\ast G$ is free as right $\essbdd\ast H$-module. So by Shapiro's lemma
        \[
        \dim_{\scalebox{0.8}{$\VN(\gpd)$}}\left(\Tor_k^{\scalebox{0.8}{$L^{\infty}(\gpd^0)\ast H$}}(\VN(\gpd),L^{\infty}(\gpd^0))\right)
        \]
        equals to
        \[
        \dim_{\scalebox{0.8}{$\VN(\gpd)$}}\left(\Tor_k^{\scalebox{0.8}{$L^{\infty}(\gpd^0)\ast G$}}(\VN(\gpd),L^{\infty}(\gpd^0)\ast G\otimes_{\scalebox{0.8}{$L^{\infty}(\gpd^0)\ast H$}}L^{\infty}(\gpd^0))\right).
        \]
        On the one side, since $L^{\infty}(\gpd^0)\ast G\subseteq \C[\gpd]$ is a left $\dim_{\scalebox{0.8}{$L^{\infty}(\gpd^0)$}}$-isomorphism by \cref{lem: dim_isomph_gpd_ring_crossed_ring}, then according to \cref{lem: dim_isomph_tensor_right}
        \[
        L^{\infty}(\gpd^0)\ast G\otimes_{\scalebox{0.8}{$L^{\infty}(\gpd^0)\ast H$}}L^{\infty}(\gpd^0)\rightarrow \C[\gpd]\otimes_{\scalebox{0.8}{$L^{\infty}(\gpd^0)\ast H$}}L^{\infty}(\gpd^0)
        \]
        is a left $\dim_{\scalebox{0.8}{$L^{\infty}(\gpd^0)$}}$-isomorphism. On the other side, from \cref{thm: dim_isomph_changing_scalars} we get that
        \[
        \C[\gpd]\otimes_{\scalebox{0.8}{$L^{\infty}(\gpd^0)\ast H$}}L^{\infty}(\gpd^0)\rightarrow\C[\gpd]\otimes_{\scalebox{0.8}{$\C[\Hgpd]$}}L^{\infty}(\gpd^0)
        \]
        is a left $\dim_{\scalebox{0.8}{$L^{\infty}(\gpd^0)$}}$-isomorphism. Combining these left $\dim_{\scalebox{0.8}{$L^{\infty}(\gpd^0)$}}$-isomorphisms, by \cref{lem: comp_dim_isomph_is_dim_isomph}, we can apply \cref{lem: dim_isomph_tensor_left} to conclude that
        \[
        \dim_{\scalebox{0.8}{$\VN(\gpd)$}}\left(\Tor_k^{\scalebox{0.8}{$L^{\infty}(\gpd^0)\ast G$}}(\VN(\gpd),L^{\infty}(\gpd^0)\ast G\otimes_{\scalebox{0.8}{$L^{\infty}(\gpd^0)\ast H$}}L^{\infty}(\gpd^0))\right)
        \]
        equals to
        \[\dim_{\scalebox{0.8}{$\VN(\gpd)$}}\left(\Tor_k^{\scalebox{0.8}{$L^{\infty}(\gpd^0)\ast G$}}(\VN(\gpd),\C[\gpd]\otimes_{\scalebox{0.8}{$\C[\Hgpd]$}}L^{\infty}(\gpd^0))\right)
        \]
        since $L^{\infty}(\gpd^0)*G$ is $\dim_{\scalebox{0.8}{$L^{\infty}(\gpd^0)$}}$-$\dim_{\scalebox{0.8}{$L^{\infty}(\gpd^0)$}}$-compatible, for instance, by \cref{lem: dim_isomph_tensor_right} and \cref{lem: dim_isomph_gpd_ring_crossed_ring}. However, by \cref{thm: dim_isomph_changing_scalars}
        \[
        \dim_{\scalebox{0.8}{$\VN(\gpd)$}}\left(\Tor_k^{\scalebox{0.8}{$L^{\infty}(\gpd^0)\ast G$}}(\VN(\gpd),\C[\gpd]\otimes_{\scalebox{0.8}{$\C[\Hgpd]$}}L^{\infty}(\gpd^0))\right)
        \]
        and
        \[\dim_{\scalebox{0.8}{$\VN(\gpd)$}}\left(\Tor_k^{\scalebox{0.8}{$\C[\gpd]$}}(\VN(\gpd),\C[\gpd]\otimes_{\scalebox{0.8}{$\C[\Hgpd]$}}L^{\infty}(\gpd^0))\right)
        \]
        coincide, which shows the claim.
            
        Besides, according to \cref{lem: gpd_ring_is_free_gpd_n_module} $\C[\gpd]$ is free as right $\C[\gpd_n]$-module; and hence by Shapiro's lemma
        \[
        \beta_k^{(2)}(\Hgpd)=\dim_{\scalebox{0.8}{$\VN(\gpd)$}}\left(\Tor_k^{\scalebox{0.8}{$\C[\gpd_n]$}}(\VN(\gpd),\C[\gpd_n]\otimes_{\scalebox{0.8}{$\C[\Hgpd]$}}L^{\infty}(\gpd^0))\right).
        \]
        Finally, since $\VN(\gpd_n)\hookrightarrow\VN(\gpd)$ is a trace-preserving $\ast$-homomorphism by \cref{lem: gpd_n_is_trace_preserving}, then \cref{prop: trace_preserving_flat} and \cref{prop: dim_extension_for_finite_vNa} imply that
        \[
        \beta_k^{(2)}(\Hgpd)=\dim_{\scalebox{0.8}{$\VN(\gpd_n)$}}\left(\Tor_k^{\scalebox{0.8}{$\C[\gpd_n]$}}(\VN(\gpd_n),\C[\gpd_n]\otimes_{\scalebox{0.8}{$\C[\Hgpd]$}}L^{\infty}(\gpd^0))\right).
        \]
	\end{proof}

\section{Proof of Theorem \ref{thm: module_beta_k}}\label{sec: algebraic_proofs}

    To begin with, note that the vanishing criterion for $L^2$-Betti numbers of $G$ follows from \cref{thm: control_beta_k_subnormal_for_H} in case there are finite index subgroups of $G$ which have arbitrary high index and contain $N$. However, the existence of such families of subgroups are uncommon. To overcome this obstacle, Gaboriau, and Sauer and Thom use either equivalence relations or groupoids. It turns out that from an algebraic point of view, the module structure gives us the freedom to take advantage of the infinite index situation. The key idea is to consider the left $\UO(N)$-module $\UO(N)\otimes_{\scalebox{0.8}{$\C[N]$}}\C[G]$. This object actually has a ring structure given by the crossed product $\UO(N)*G/N$ where the group action comes from conjugation. More in detail, since every $g\in G$ normalizes $\C[N]$, then every $g$ normalizes $\VN(N)$ seen as the strong closure in the bounded operators of $\ell^2(N)\subseteq \ell^2(G)$. Hence, every $g$ also normalizes its Ore localization $\UO(N)$ inside $\UO(G)$. In addition, elements in different cosets of $N$ in $G$ are right (and thus left since $g\UO(N)g^{-1}=\UO(N)$ for every $g\in G$) $\UO(N)$-linearly independent. Therefore, it makes sense to consider the crossed product $\UO(N) * G/N$ that can be realized inside $\UO(G)$ (see \cite[Lemma 10.57]{Luck02}). To sum up, one has
    \[
    \C[G]=\C[N]*G/N\subseteq \UO(N)*G/N \subseteq \UO(G).
    \]
    Our algebraic proof builds heavily on the next dimension criterion.
    
    \begin{thm}\label{thm: finite_dim_ind_zero_dim}
        Let $G$ be a countable group, $N$ an infinite index normal subgroup in $G$ and denote by $S$ the ring $\UO(N)* G/N$. Suppose that $M$ is a left $S$-module with $\dim_{\scalebox{0.8}{$\UO(N)$}}(M)$ finite. Then
        \[
        \dim_{\scalebox{0.8}{$\UO(G)$}}(\UO(G)\otimes_S M)=0.
        \]
    \end{thm}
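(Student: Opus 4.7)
The plan is to reduce the theorem to a Sylvester rank calculation on the crossed product $S=\UO(N)*G/N$ that exploits the infinite index $[G:N]=\infty$.

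First, I would reduce to the case of $M$ finitely generated over $S$. Since $\UO(G)$ is von Neumann regular, its Sylvester module rank function is exact, and therefore continuous on submodules, so
\[
\dim_{\UO(G)}(\UO(G)\otimes_S M)=\sup_{M_0}\dim_{\UO(G)}(\UO(G)\otimes_S M_0),
\]
where $M_0$ ranges over the finitely generated $S$-submodules of $M$. By monotonicity, $\dim_{\UO(N)}(M_0)\leq\dim_{\UO(N)}(M)<\infty$, and hence I may assume $M$ is finitely generated over $S$, say by $m_1,\ldots,m_r$.

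Next, I would fix infinitely many coset representatives $g_1=e,g_2,g_3,\ldots$ of $N$ in $G$, which exist since $[G:N]=\infty$. For each $n\geq 1$, the finitely generated $\UO(N)$-submodule $M_n:=\sum_{1\leq i\leq n,\,1\leq j\leq r}\UO(N)\,g_im_j\subseteq M$ has $\UO(N)$-dimension at most $d:=\dim_{\UO(N)}(M)$, so the kernel $R_n$ of the surjection $\UO(N)^{nr}\twoheadrightarrow M_n$, $e_{ij}\mapsto g_im_j$, satisfies $\dim_{\UO(N)}(R_n)\geq nr-d$ by additivity. Each $(a_{ij})\in R_n$ produces a $\UO(G)$-linear relation $\sum_{i,j}a_{ij}g_i\otimes m_j=0$ in $\UO(G)\otimes_S M$, because $a_{ij}g_i\in S$ and $a_{ij}g_i\otimes m_j=1\otimes a_{ij}g_im_j$. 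By flatness of $\UO(G)$ over the von Neumann regular ring $\UO(N)$, these relations span a $\UO(G)$-submodule of $\ker\bigl(\psi_n\colon\UO(G)^{nr}\to\UO(G)\otimes_S M,\,e_{ij}\mapsto g_i\otimes m_j\bigr)$ of $\UO(G)$-dimension at least $nr-d$.

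The heart of the argument, and the main obstacle, is to combine these relations with the tautological identities $g_i\otimes m_j=1\otimes g_im_j$, which yield the $(n-1)r$-dimensional free $\UO(G)$-submodule $T$ of $\ker\psi_n$ generated by $\{g_ie_{1j}-e_{ij}\}_{i>1,\,j}$, so as to force $\dim_{\UO(G)}(\ker\psi_n)=nr$ as $n\to\infty$. By the exact-SMRF formula $\dim(T+\UO(G)R_n)=\dim T+\dim(\UO(G)R_n)-\dim(T\cap\UO(G)R_n)$, it suffices to bound the intersection $T\cap\UO(G)R_n$ so that the sum exhausts $\UO(G)^{nr}$ in the limit. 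This ultimately reduces to a Sylvester rank formula for elements $\sum_i a_ig_i\in S\subseteq\UO(G)$ of the form
\[
\rk_{\UO(G)}\Bigl(\sum_i a_ig_i\Bigr)=\dim_{\UO(N)}\Bigl(\sum_i a_i\UO(N)\Bigr),
\]
together with its matrix generalization. The injectivity of $\UO(N)^n\hookrightarrow\UO(G)$ via $(a_i)\mapsto\sum_ia_ig_i$, which follows from the faithful conditional expectation $\VN(G)\to\VN(N)$, together with the crossed-product decomposition $\VN(G)=\VN(N)*G/N$, are the main tools behind this estimate. This is the technical input attributed to Jaikin-Zapirain in the acknowledgments, and it is what forces $\dim_{\UO(G)}(\UO(G)\otimes_S M)$ to shrink to zero as $n$ grows.
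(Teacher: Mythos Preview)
Your reduction to the finitely generated case and your idea of exploiting infinitely many coset representatives of $N$ in $G$ are exactly right, and mirror the paper's strategy. However, the proof has a genuine gap at the step you yourself flag as ``the main obstacle'': controlling the intersection $T\cap\UO(G)R_n$. Without that control, combining $T$ and $\UO(G)R_n$ only yields the trivial bound $\dim_{\UO(G)}(\UO(G)\otimes_S M)\leq d$, which you already get from $\UO(G)R_n$ alone. More seriously, the rank identity you propose,
\[
\rk_{\UO(G)}\Bigl(\sum_i a_ig_i\Bigr)=\dim_{\UO(N)}\Bigl(\sum_i a_i\UO(N)\Bigr),
\]
is false in general: take $G=N\times(\Z/2\Z)^{(\N)}$ with $N$ infinite, $a_1=a_2=1$, $g_1=e$, and $g_2$ any order-two element outside $N$; then the left side is $\rk_{\UO(G)}(1+g_2)=\tfrac12$ while the right side is $1$. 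Likewise, the decomposition $\VN(G)=\VN(N)*G/N$ you invoke does not hold for arbitrary normal subgroups.

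The paper circumvents the intersection problem entirely by working with a presentation $M\cong S^k/L$ rather than with generators of $M$. For a finite set of coset representatives $T_i$, one has $\dim_{\UO(N)}(M)\geq k|T_i|-\dim_{\UO(N)}(L\cap\UO(N)[T_i]^k)$, and one chooses finitely many $a_1,\ldots,a_l\in L\cap\VN(N)[T_i]^k$ whose $\UO(N)$-span $K_i$ captures this dimension up to $1/|T_i|$. The crucial analytic input---replacing your rank identity---is the one-sided inequality
\[
\dim_{\UO(N)}(K_i)\leq |T_i|\cdot\dim_{\UO(G)}(\UO(G)a_1+\cdots+\UO(G)a_l),
\]
proved by an elementary Hilbert-space argument: embed $\ell^2(N)^{|T_i|}$ isometrically into $\ell^2(G)$ via the coset representatives and compare orthogonal projections. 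Since the $a_j$ lie in $L$, the right-hand side is at most $\dim_{\UO(G)}(\UO(G)L)=k-\dim_{\UO(G)}(\UO(G)\otimes_S M)$. Combining these gives $\dim_{\UO(G)}(\UO(G)\otimes_S M)\leq d/|T_i|+1/|T_i|^2$, and letting $|T_i|\to\infty$ finishes the proof. The point is that the factor $|T_i|$ appears as a \emph{multiplicative} gain in the projection inequality, rather than through any additive juggling with tautological relations.
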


    \begin{proof}
        First, we assume that $M$ is a finitely generated left $S$-module; so $M\cong S^k/L$ as $S$-module for some non-negative integer $k$ and some $S$-submodule $L$ of $S^k$. Without loss of generality we can assume that $M$ is cyclic, that is, $k=1$. Indeed, there exist $s_1,\ldots, s_k\in S$ such that $s_1+L,\ldots, s_k+L$ generate $M$. Observe that there is a natural surjective left $S$-homomorphism
        \[
        Ss_1+L/L \oplus \ldots \oplus Ss_k+L/L \rightarrow M
        \]
        where $Ss_i+L/L$ is a $S$-submodule of $M$. But, $Ss_i+L/L$ is also an $\UO(N)$-submodule of $M$, and hence, since $\dim_{\scalebox{0.8}{$\UO(N)$}}$ is exact, $Ss_i+L/L$ has finite dimension as $\UO(N)$-module. Finally, since extending scalars is right exact, it suffices to prove the claim for cyclic modules by \cref{defn: BSMRF}.
            
        Now let $T$ be a left transversal of $N$ in $G$ and consider the following family $\{T_i\}_{i\geq 0}$ of non-empty finite subsets of $T$ ordered by inclusion and such that $T=\cup_{i\geq 0}T_i$. Fix some $T_i=\{t_1,\ldots, t_{n_i}\}$. We shall show that
        \[
        \dim_{\scalebox{0.8}{$\UO(G)$}}(\UO(G)\otimes_S M)\leq \frac{\dim_{\scalebox{0.8}{$\UO(N)$}}(M)}{|T_i|}+\frac{1}{|T_i|^2}.
        \]
        Denote by $\UO(N)[T_i]$ the $\UO(N)$-module defined as $\oplus_{j=1}^{n_i}\UO(N)\overline{t_j}$ where $\ \bar{}\ $ stands for the crossed product construction with $G/N$. Then
        \begin{align*}
        	\dim_{\scalebox{0.8}{$\UO(N)$}}(M)&\geq \dim_{\scalebox{0.8}{$\UO(N)$}}\left(\UO(N)[T_i]/(L\cap\UO(N)[T_i])\right)\\
        	&=|T_i|-\dim_{\scalebox{0.8}{$\UO(N)$}}\left(L\cap \UO(N)[T_i]\right)
        \end{align*}
        since $\UO(N)[T_i]/(L\cap\UO(N)[T_i])$ is an $\UO(N)$-subquotient of $M$. Besides, the dimension function is exact; and so
        \[
        \dim_{\scalebox{0.8}{$\UO(N)$}}\left(L\cap \UO(N)[T_i]\right)=\sup_K\dim_{\scalebox{0.8}{$\UO(N)$}}(K)
        \]
        for $K$ ranging over the finitely generated $\UO(N)$-submodules of $L\cap\UO(N)[T_i]$. Thus given $\varepsilon_i=1/|T_i|$, there are $a_1,\ldots,a_l\in L\cap\UO(N)[T_i]$ such that $K_i=\UO(N) a_1+\ldots+\UO(N) a_l$ satisfies 
        \[
        \dim_{\scalebox{0.8}{$\UO(N)$}}\left(L\cap \UO(N)[T_i]\right)\leq \dim_{\scalebox{0.8}{$\UO(N)$}}(K_i)+\frac{1}{|T_i|},
        \]
	    and so
        \[
        \dim_{\scalebox{0.8}{$\UO(N)$}}(M)\geq |T_i|-\dim_{\scalebox{0.8}{$\UO(N)$}}(K_i)-\frac{1}{|T_i|}.
        \]
    
        Moreover, since $\mathcal{U}(N)$ is the Ore localization of $\mathcal{N}(N)$, we can assume without loss of generality that $a_1,\ldots, a_l\in \VN(N)[T_i]$ according to \cref{lem: common_denominator_ore}. In particular, by \cref{lem: UOdim_equal_vNdim_for_elements_in_vN}
        \[
        \dim_{\scalebox{0.8}{$\UO(N)$}} (K_i)=\dim_{\scalebox{0.8}{$\mathcal{N}(N)$}}(\VN(N) a_1+\ldots+\VN(N) a_l)
        \]
        \begin{claim}\label{claim: from_dimN_to_dimG}
            We have that $\dim_{\scalebox{0.8}{$\UO(N)$}} (K_i)\leq |T_i|\dim_{\scalebox{0.8}{$\UO(G)$}}(\UO(G) a_1+\ldots+\UO(G) a_l)$.
        \end{claim}
        \begin{proof}
        To this end we shall define two Hilbert spaces. Firstly, note that $\mathcal{N}(N)[T_i]$ embeds diagonally into $\ell^2(N)^{|T_i|}$ as a Hilbert $N$-module. Thus if we denote by $\ell^2(N)a_j$ the embedded image of $\VN(N)a_j$, our first Hilbert subspace $W$ is defined as follows
        \[
        W:=\overline{\ell^2(N)a_1+\ldots +\ell^2(N)a_l}\subseteq \ell^2(N)^{|T_i|}.
        \]
	    In the same fashion, $V$ is the second Hilbert subspace defined as
        \[
        V:=\overline{\ell^2(G)a_1+\ldots +\ell^2(G)a_l}\subseteq \ell^2(G).
        \]
	    Therefore we have
 
        \begin{align*}
		  \dim_{\scalebox{0.8}{$\UO(N)$}}(K_i)&=\dim_{\scalebox{0.8}{$\mathcal{N}(N)$}}(W)=\sum_{j=1}^{|T_i|}\langle \pr_W(1_j), 1_j\rangle\\
		  \dim_{\scalebox{0.8}{$\UO(G)$}}(\mathcal{U}(G) a_1+\ldots +\mathcal{U}(G) a_l)& =\dim_{\scalebox{0.8}{$\mathcal{N}(G)$}}(V)=\langle \pr_V(1),1\rangle
	    \end{align*}
 
        where pr$_W$ and pr$_V$ are the orthogonal projections onto the subspaces $W$ and $V$, respectively. Note that $\ell^2(N)^{|T_i|}$ embeds isometrically into $\ell^2(G)$ multiplying each coordinate by the appropriate $t\in T_i$; and hence so does $W$ into $V$. Thus it holds
 
	    \begin{align*}
		  \sum_{j=1}^{|T_i|}\langle \pr_W(1_j), 1_j\rangle& =\sum_{t\in T_i}\langle \pr_{i(W)}(t),t\rangle \leq \sum_{t\in T_i}\langle \pr_{V}(t),t\rangle\\
		  &=|T_i|\langle \pr_V(1),1 \rangle
	    \end{align*}
 
        where the inequality follows from the fact that $i(W)$ is a subspace of $V$, and the last equality holds because multiplying by an element of $G$ is an isometry.
        \end{proof}
        
        Then by \cref{claim: from_dimN_to_dimG}
        \[
        \dim_{\scalebox{0.8}{$\UO(N)$}}(M)\geq |T_i|-|T_i|\dim_{\scalebox{0.8}{$\UO(G)$}}(\UO(G) a_1+\ldots +\UO(G) a_l)-\frac{1}{|T_i|},
        \]
	    that is,
	    \begin{equation*}
		  \frac{\dim_{\scalebox{0.8}{$\UO(N)$}}(M)}{|T_i|}+\frac{1}{|T_i|^2}\geq 1-\dim_{\scalebox{0.8}{$\UO(G)$}}(\UO(G) a_1+\ldots +\UO(G) a_l).
	    \end{equation*}
        \begin{claim}\label{claim: from_dimS_to_dimG}
            We have that $\dim_{\scalebox{0.8}{$\UO(G)$}}(\UO(G)\otimes_S M)\leq 1-\dim_{\scalebox{0.8}{$\UO(G)$}}(\UO(G)a_1+\ldots+\UO(G)a_l)$.
        \end{claim}
        \begin{proof}
	       To begin with 
	    \begin{align*}
		  \dim_{\scalebox{0.8}{$\UO(G)$}}(\UO(G)\otimes_S M)&=\dim_{\scalebox{0.8}{$\UO(G)$}}\left(\UO(G)\otimes_S (S/L)\right)\\
		  &=\dim_{\scalebox{0.8}{$\UO(G)$}}\left((\UO(G)\otimes S)/\UO(G) L\right)
	    \end{align*}
        where $\UO(G) L$ denotes the image submodule. Now with abuse of notion on $\UO(G) L$ it holds that
	    \begin{align*}
		  \dim_{\scalebox{0.8}{$\UO(G)$}}\left((\UO(G)\otimes S)/\UO(G) L\right)&=\dim_{\scalebox{0.8}{$\UO(G)$}}\left(\UO(G)/\UO(G) L\right)\\
		  &=1-\dim_{\scalebox{0.8}{$\UO(G)$}}(\UO(G) L).
	    \end{align*}
        Thus combining the above two equalities we get that
        \[
        \dim_{\scalebox{0.8}{$\UO(G)$}}(\UO(G)\otimes_S M)=1-\dim_{\scalebox{0.8}{$\UO(G)$}}(\UO(G) L).
        \]
	    To conclude the claim observe that
        \[
        \dim_{\scalebox{0.8}{$\UO(G)$}}(\UO(G) a_1+\ldots +\UO(G) a_l)\leq \dim_{\scalebox{0.8}{$\UO(G)$}}(\UO(G) L)
        \]
	    since $\dim_{\scalebox{0.8}{$\UO(G)$}}$ is exact. 
        \end{proof}
        Hence, by \cref{claim: from_dimS_to_dimG}
        \[
        \dim_{\scalebox{0.8}{$\UO(G)$}}(\UO(G)\otimes_S M)\leq \frac{\dim_{\scalebox{0.8}{$\UO(N)$}}(M)}{|T_i|}+\frac{1}{|T_i|^2}
        \]
        as we wanted. Finally, since $T_i$ was arbitrary and $\dim_{\scalebox{0.8}{$\UO(N)$}}(M)$ is finite
        \[
        \dim_{\scalebox{0.8}{$\UO(G)$}}(\UO(G)\otimes_S M)\leq\inf_{i\geq 0}\left\{\frac{\dim_{\scalebox{0.8}{$\UO(N)$}}(M)}{|T_i|}+\frac{1}{|T_i|^2}\right\}=0
        \]
	    which ends the proof of the finitely generated case.
 
        Now we show the standard argument to reduce the general case to the finitely generated one. Let $M$ be a left $S$-module. According to \cref{lem: dim_directed_union}
        \[
        \dim_{\scalebox{0.8}{$\UO(G)$}}(\UO(G)\otimes_S M)\leq \sup_{M'}\dim_{\scalebox{0.8}{$\UO(G)$}}(\UO(G) \otimes_S M')
        \]
        where $M'$ ranges over finitely generated $S$-submodules of $M$. Choose an arbitrary such $M'$. In particular, it is a $\UO(N)$-submodule of $M$; and hence $\dim_{\scalebox{0.8}{$\UO(N)$}}(M')$ is finite by exactness. Consequently, by the finitely generated case $\dim_{\scalebox{0.8}{$\UO(G)$}}(\UO(G)\otimes_S M')=0$; and since $M'$ was arbitrary, we conclude that
        \[
        \dim_{\scalebox{0.8}{$\UO(G)$}}(\UO(G)\otimes_S M)=0
        \]
	    which completes the proof.
    \end{proof}

    In order to show \cref{thm: module_beta_k} we must have some control on the dimension of certain modules arising from the long exact sequences given by Tor. The following result states that in terms of dimension the modules appearing on these long exact sequences behave as zero modules.
        
    \begin{prop}\label{prop: control_dim_modules}
        Let $G$ be a countable group, $N$ a normal subgroup in $G$, $M$ a left $\C[G]$-module in which $N$ acts trivially and $k$ be a non-negative integer. Suppose that $\beta_i^{(2)}(N)$ vanishes for all $0\leq i\leq k$. Then
        \[
        \dim_{\scalebox{0.8}{$\UO(G)$}}\left(\Tor_i^{\scalebox{0.8}{$\C[G]$}}(\UO(G),M)\right)=0
        \]
	for all $0\leq i\leq k$.
    \end{prop}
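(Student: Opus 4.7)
The plan is to argue by induction on $i \in \{0, 1, \dots, k\}$, using the intermediate ring $S := \UO(N) \ast G/N$ together with \cref{thm: finite_dim_ind_zero_dim} in the base case, and a standard dimension--shifting manoeuvre in the inductive step.

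For the base case $i = 0$, I would factor
\[
\UO(G) \otimes_{\C[G]} M \;\cong\; \UO(G) \otimes_{S} \bigl(S \otimes_{\C[G]} M\bigr)
\]
and identify $S \otimes_{\C[G]} M \cong \UO(N) \otimes_{\C[N]} M$ via the isomorphism $S \cong \UO(N) \otimes_{\C[N]} \C[G]$ of $\UO(N)$-$\C[G]$-bimodules (which uses the freeness of $\C[G]$ over $\C[N]$ on a transversal of $N$). Since $N$ acts trivially on $M$, as a $\C[N]$-module $M$ is a direct sum of copies of the trivial module $\C$, so that $\UO(N) \otimes_{\C[N]} M$ is a direct sum of copies of $\UO(N) \otimes_{\C[N]} \C$, each of $\UO(N)$-dimension $\beta_0^{(2)}(N) = 0$. \cref{lem: dim_directed_union} promotes this to $\dim_{\UO(N)}(\UO(N) \otimes_{\C[N]} M) = 0$, and \cref{thm: finite_dim_ind_zero_dim} then yields the desired vanishing of $\dim_{\UO(G)}(\UO(G) \otimes_{\C[G]} M)$.

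For the inductive step, I observe that $M$ is naturally a $\C[G/N]$-module (since $N$ acts trivially), choose a surjection $F \twoheadrightarrow M$ from a free $\C[G/N]$-module, and denote its kernel by $K$. Both $F$ and $K$ are $\C[G/N]$-modules, hence carry trivial $N$-actions and satisfy the hypotheses of the proposition. Using $\C[G/N] \cong \C[G] \otimes_{\C[N]} \C$ together with \cref{lem: beta_H_as_dimG} applied to $N \leq G$, one obtains $\dim_{\UO(G)} \Tor_i^{\C[G]}(\UO(G), \C[G/N]) = \beta_i^{(2)}(N) = 0$; compatibility of $\Tor$ with direct sums and \cref{lem: dim_directed_union} then yield $\dim_{\UO(G)} \Tor_i^{\C[G]}(\UO(G), F) = 0$. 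The inductive hypothesis at degree $i-1$ applied to $K$ supplies $\dim_{\UO(G)} \Tor_{i-1}^{\C[G]}(\UO(G), K) = 0$, and the exact piece
\[
\Tor_i^{\C[G]}(\UO(G), F) \longrightarrow \Tor_i^{\C[G]}(\UO(G), M) \longrightarrow \Tor_{i-1}^{\C[G]}(\UO(G), K)
\]
of the long exact sequence, combined with exactness of the dimension function, squeezes $\dim_{\UO(G)} \Tor_i^{\C[G]}(\UO(G), M)$ to zero.

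The only genuinely substantive ingredient is \cref{thm: finite_dim_ind_zero_dim}, used only in the base case: it is precisely where the infinite index of $N$ in $G$ enters, and its role is to turn a module of vanishing $\UO(N)$-dimension into one of vanishing $\UO(G)$-dimension after base change along $\C[G] \subseteq S \subseteq \UO(G)$. The inductive step is then a routine dimension--shifting manipulation, whose only real subtlety is ensuring that $F$ and $K$ continue to carry trivial $N$-actions so that the inductive hypothesis can be reapplied.
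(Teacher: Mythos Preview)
Your inductive step is essentially the paper's argument: both resolve $M$ by a free $\C[G/N]$-module $F=(\C[G]\otimes_{\C[N]}\C)^{(I)}$, use \cref{lem: beta_H_as_dimG} to identify $\dim_{\UO(G)}\Tor_i^{\C[G]}(\UO(G),\C[G/N])$ with $\beta_i^{(2)}(N)=0$, and then squeeze via the long exact sequence and the inductive hypothesis applied to the kernel.

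The base case, however, has a genuine gap. You invoke \cref{thm: finite_dim_ind_zero_dim}, but that theorem assumes $N$ has \emph{infinite index} in $G$, and the proposition carries no such hypothesis. (The paper does later apply the proposition inside the proof of \cref{thm: module_beta_k}, where infinite index is in force, but the proposition itself is stated and proved without it.) Your remark that ``this is precisely where the infinite index of $N$ in $G$ enters'' is therefore importing an assumption the statement does not give you.

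The repair is immediate and already contained in your own write-up: run your inductive-step argument at $i=0$. There the three-term piece degenerates to the surjection $\UO(G)\otimes_{\C[G]}F \twoheadrightarrow \UO(G)\otimes_{\C[G]}M$, and $\dim_{\UO(G)}(\UO(G)\otimes_{\C[G]}\C[G/N])=\beta_0^{(2)}(N)=0$ from \cref{lem: beta_H_as_dimG} (plus \cref{lem: dim_directed_union} for the direct sum) finishes it. This is exactly the paper's base case. So the detour through $S=\UO(N)\ast G/N$ and \cref{thm: finite_dim_ind_zero_dim} is both unnecessary and, as written, illicit; simply delete it in favour of the $i=0$ instance of your inductive mechanism.
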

        
    \begin{proof}
        Let $M$ be a left $\C[G]$-module in which $N$ acts trivially. Then we have a short exact sequence of $\C[G]$-modules
        \[
        0\longrightarrow L\longrightarrow (\C[G]\otimes_{\scalebox{0.8}{$\mathbb{C}[N]$}} \mathbb{C})^{I}\longrightarrow M\longrightarrow 0
        \]
        for some cardinal number $I$. We argue by induction on $k$. For $k=0$, extending scalars on the above sequence yields to a surjective map
        \[
        \UO(G)\otimes_{\scalebox{0.8}{$\mathbb{C}[G]$}}(\C[G]\otimes_{\scalebox{0.8}{$\mathbb{C}[N]$}}\mathbb{C})^I\rightarrow \UO(G)\otimes_{\scalebox{0.8}{$\mathbb{C}[G]$}} M.
        \]
        Now observe that
        \[
        \dim_{\scalebox{0.8}{$\UO(G)$}}\left(\UO(G)\otimes_{\scalebox{0.8}{$\mathbb{C}[G]$}}(\C[G]\otimes_{\scalebox{0.8}{$\mathbb{C}[N]$}}\mathbb{C})^I\right)
        \]
        equals to
        \[
        \dim_{\scalebox{0.8}{$\UO(G)$}}\left(\oplus_I\UO(G)\otimes_{\scalebox{0.8}{$\mathbb{C}[G]$}}(\C[G]\otimes_{\scalebox{0.8}{$\mathbb{C}[N]$}}\mathbb{C})\right).
        \]
        However, by exactness of dimension and \cref{lem: beta_H_as_dimG}
        \[
        \dim_{\scalebox{0.8}{$\UO(G)$}}\left(\oplus_I\UO(G)\otimes_{\scalebox{0.8}{$\mathbb{C}[G]$}}\C[G]\otimes_{\scalebox{0.8}{$\mathbb{C}[N]$}}\mathbb{C}\right)
        \]
        is at most
        \[
        \sup_{n\in \N}n\cdot \dim_{\scalebox{0.8}{$\UO(G)$}}\left(\UO(G)\otimes_{\scalebox{0.8}{$\mathbb{C}[G]$}}\C[G]\otimes_{\scalebox{0.8}{$\mathbb{C}[N]$}}\mathbb{C}\right)=\sup_{n\in \N}n\cdot \beta_0^{(2)}(N)=0.
        \]
        Therefore, according to \cref{defn: BSMRF} $\dim_{\scalebox{0.8}{$\UO(G)$}}(\UO(G)\otimes_{\scalebox{0.8}{$\mathbb{C}[G]$}} M)=0$ which shows the base step.

        Now assume the statement holds for $k-1$. Consider the long exact sequence associated to the initial exact sequence
        \begin{align*}
            &\mbox{Tor}_{k}^{\scalebox{0.8}{$\C[G]$}}(\UO(G),(\C[G]\otimes_{\scalebox{0.8}{$\mathbb{C}[N]$}}\mathbb{C})^I)\rightarrow\mbox{Tor}_{k}^{\scalebox{0.8}{$\C[G]$}}(\UO(G),M)\rightarrow\ldots\\
            &\ldots\rightarrow \UO(G)\otimes_{\scalebox{0.8}{$\mathbb{C}[G]$}}L\rightarrow\UO(G)\otimes_{\scalebox{0.8}{$\mathbb{C}[G]$}}(\C[G]\otimes_{\scalebox{0.8}{$\mathbb{C}[N]$}}\mathbb{C})^I\rightarrow \UO(G)\otimes_{\scalebox{0.8}{$\mathbb{C}[G]$}} M\rightarrow 0.
        \end{align*}
        Note that the dimension of all the modules from $\mbox{Tor}_{k-1}^{\scalebox{0.8}{$\C[G]$}}(\UO(G),L)$ to $\UO(G)\otimes_{\scalebox{0.8}{$\mathbb{C}[G]$}}M$, including both of them, is zero by induction hypothesis since $N$ is a normal subgroup. Hence taking dimensions in the long exact sequence we get
        \[
        \dim_{\scalebox{0.8}{$\UO(G)$}}\left(\mbox{Tor}_{k}^{\scalebox{0.8}{$\C[G]$}}(\UO(G),M)\right)\leq \dim_{\scalebox{0.8}{$\UO(G)$}}\left(\mbox{Tor}_{k}^{\scalebox{0.8}{$\C[G]$}}(\UO(G),(\C[G]\otimes_{\scalebox{0.8}{$\mathbb{C}[N]$}}\mathbb{C})^I)\right).
        \]
        Arguing as above we conclude that the right hand side vanishes, and hence it holds for $k$ which concludes the induction.
    \end{proof}

    An interesting consequence of this result is the fact that produces a Sylvester module rank function for $\C[G/N]$:
        
    \begin{cor}
        Let $G$ be a countable group, $N$ a normal subgroup in $G$ and $k$ a positive integer. Suppose that $\beta_i^{(2)}(N)$ vanishes for all $0\leq i\leq k-1$ and $\beta_k^{(2)}(N)$ is finite and non-zero. Then we can define a Sylvester module rank function by assigning to any finitely presented $\C[G/N]$-module $M$ the value
        \[
        \dim_{G,N}(M):=\frac{\dim_{\scalebox{0.8}{$\UO(G)$}}\left(\Tor_k^{\scalebox{0.8}{$\C[G]$}}(\UO(G),M)\right)}{\beta_k^{(2)}(N)}.
        \]
    \end{cor}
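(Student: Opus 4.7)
The plan is to verify the three axioms of a Sylvester module rank function directly from the definition, using \cref{prop: control_dim_modules} as the crucial vanishing input. Throughout, every finitely presented $\C[G/N]$-module $M$ is viewed as a $\C[G]$-module via the quotient map $\C[G]\twoheadrightarrow\C[G/N]$, so that $N$ acts trivially. Hence \cref{prop: control_dim_modules} applies: for all such $M$ and all $0\leq i\leq k-1$, one has $\dim_{\scalebox{0.8}{$\UO(G)$}}(\Tor_i^{\scalebox{0.8}{$\C[G]$}}(\UO(G),M))=0$. A presentation $\C[G/N]^m\to \C[G/N]^n\to M\to 0$ combined with the long exact sequence and the finiteness of $\beta_k^{(2)}(N)$ shows that $\dim_{\scalebox{0.8}{$\UO(G)$}}(\Tor_k^{\scalebox{0.8}{$\C[G]$}}(\UO(G),M))$ is finite, so $\dim_{G,N}(M)$ is a well-defined non-negative real number.

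For normalization, $\dim_{G,N}(0)=0$ is immediate. For $\dim_{G,N}(\C[G/N])=1$, I would use the identification $\C[G/N]\cong \C[G]\otimes_{\scalebox{0.8}{$\C[N]$}}\C$ as left $\C[G]$-modules and apply \cref{lem: beta_H_as_dimG} with $H=N$, yielding $\dim_{\scalebox{0.8}{$\UO(G)$}}(\Tor_k^{\scalebox{0.8}{$\C[G]$}}(\UO(G),\C[G/N]))=\beta_k^{(2)}(N)$, which after dividing by $\beta_k^{(2)}(N)$ gives the desired $1$. Additivity on direct sums is then automatic, since $\Tor_k^{\scalebox{0.8}{$\C[G]$}}(\UO(G),-)$ commutes with finite direct sums and $\dim_{\scalebox{0.8}{$\UO(G)$}}$ is additive.

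For the rank-function inequalities, given an exact sequence $M_1\to M_2\to M_3\to 0$ of $\C[G/N]$-modules, set $K=\ker(M_2\to M_3)=\im(M_1\to M_2)$. Both $K$ and $K'=\ker(M_1\twoheadrightarrow K)$ carry trivial $N$-actions, so \cref{prop: control_dim_modules} yields $\dim_{\scalebox{0.8}{$\UO(G)$}}(\Tor_{k-1}^{\scalebox{0.8}{$\C[G]$}}(\UO(G),K))=\dim_{\scalebox{0.8}{$\UO(G)$}}(\Tor_{k-1}^{\scalebox{0.8}{$\C[G]$}}(\UO(G),K'))=0$. The long exact sequence attached to $0\to K\to M_2\to M_3\to 0$ combined with exactness of $\dim_{\scalebox{0.8}{$\UO(G)$}}$ gives
\[
\dim_{\scalebox{0.8}{$\UO(G)$}}(\Tor_k(M_3))\leq \dim_{\scalebox{0.8}{$\UO(G)$}}(\Tor_k(M_2))\leq \dim_{\scalebox{0.8}{$\UO(G)$}}(\Tor_k(K))+\dim_{\scalebox{0.8}{$\UO(G)$}}(\Tor_k(M_3)),
\]
while the long exact sequence for $0\to K'\to M_1\twoheadrightarrow K\to 0$ forces $\dim_{\scalebox{0.8}{$\UO(G)$}}(\Tor_k(K))\leq \dim_{\scalebox{0.8}{$\UO(G)$}}(\Tor_k(M_1))$. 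Dividing the resulting inequalities by $\beta_k^{(2)}(N)$ produces both required inequalities for $\dim_{G,N}$.

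The only real obstacle is the conceptual one of identifying the correct vanishing input; once \cref{prop: control_dim_modules} is in hand, the verification is a routine book-keeping of long exact sequences together with exactness of the L\"uck dimension function on $\UO(G)$.
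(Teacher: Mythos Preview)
Your proof is correct and follows essentially the same route as the paper: both split the exact sequence $M_1\to M_2\to M_3\to 0$ into the two short exact sequences $0\to \ker g\to M_2\to M_3\to 0$ and $0\to \ker f\to M_1\to \im f\to 0$, apply \cref{prop: control_dim_modules} (with $k$ replaced by $k-1$) to kill the degree $k-1$ terms in the associated long exact sequences, and then read off the required inequalities from exactness of $\dim_{\scalebox{0.8}{$\UO(G)$}}$. Your write-up is in fact more explicit than the paper's, which dismisses normalization, additivity, and well-definedness as ``easy'' and only sketches the third axiom.
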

        
    \begin{proof}
        All conditions from \cref{defn: SMRF} follow easily except for the last one. So assume that 
        \[
        M_1\xrightarrow{f} M_2\xrightarrow{g} M_3\rightarrow 0
        \]
        is an exact sequence of $\mathbb{C}[G/N]$-modules. Then we have two associated short exact sequences
        \[
        0\rightarrow \ker g\rightarrow M_2\rightarrow M_3\rightarrow 0\quad\mbox{and}\quad 0\rightarrow \ker f\rightarrow M_1\rightarrow \im f\rightarrow 0.
        \]
        Since $\ker g = \im f$, taking dimensions in the long exact sequences derived by extending scalars along with \cref{prop: control_dim_modules} ends the proof.
    \end{proof}

    \begin{rem}
        This last result gives a new interpretation of Sauer and Thom's theorem. Indeed, if $\beta_k^{(2)}(N)$ vanishes, then taking $M=\mathbb{C}$ in \cref{prop: control_dim_modules} implies that $\beta_k^{(2)}(G)$ vanishes too. Otherwise, we are reduce to show that the above Sylvester module rank function $\dim_{G,N}$ vanishes at $\mathbb{C}$.
    \end{rem}

    We are now ready to present a proof of \cref{thm: module_beta_k} which relies in a combination of \cref{prop: control_dim_modules}, which ensures that certain module has finite dimension, and \cref{thm: finite_dim_ind_zero_dim}, to compress the dimension to zero.
        
    \begin{proof}[Proof of \cref{thm: module_beta_k}]
        Let $M$ be an arbitrary left $\mathbb{C}[G]$-module in which $N$ acts trivially. Then we have a short exact sequence of $\C[G]$-modules
        \[
        0\rightarrow J\rightarrow (\C[G]\otimes_{\scalebox{0.8}{$\mathbb{C}[N]$}} \mathbb{C})^{I}\rightarrow M\longrightarrow 0
        \]
        for some cardinal number $I$. Let $S$ be the subring $\UO(N)*G/N$ of $\UO(G)$. We apply first $S\otimes_{\scalebox{0.8}{$\C[G]$}}$ and second $\UO(G)\otimes_{\scalebox{0.8}{$\C[G]$}}$ to get long exact sequences
        \begin{equation}
            \ldots \rightarrow\Tor_k^{\scalebox{0.8}{$\C[G]$}}(S,(\C[G]\otimes_{\scalebox{0.8}{$\mathbb{C}[N]$}} \mathbb{C})^{I})\xrightarrow{i_k}\Tor_k^{\scalebox{0.8}{$\C[G]$}}(S,M)\rightarrow\ldots\label{LESS}
        \end{equation}
        and
        \begin{equation}
            \ldots \rightarrow\Tor_k^{\scalebox{0.8}{$\C[G]$}}(\UO(G),(\C[G]\otimes_{\scalebox{0.8}{$\mathbb{C}[N]$}} \mathbb{C})^{I})\xrightarrow{j_k}\Tor_k^{\scalebox{0.8}{$\C[G]$}}(\UO(G),M)\rightarrow\ldots\label{LESUG}
        \end{equation}
        Denote by $J$ and $K$ the image of $i_k$ and $j_k$, respectively.
        \begin{claim}\label{claim: dimK_beta_k_M}
            We have that $\dim_{\scalebox{0.8}{$\UO(G)$}}(K)=\dim_{\scalebox{0.8}{$\UO(G)$}}\left(\Tor_k^{\scalebox{0.8}{$\C[G]$}}(\UO(G),M)\right)$.
        \end{claim}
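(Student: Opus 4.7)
The plan is to derive the claim from the long exact sequence (\ref{LESUG}) together with \cref{prop: control_dim_modules}. By exactness of the $\UO(G)$-dimension, the claim is equivalent to showing that the quotient $\Tor_k^{\C[G]}(\UO(G),M)/K$ has vanishing dimension. From (\ref{LESUG}) this quotient, being the image of the connecting homomorphism, embeds into $\Tor_{k-1}^{\C[G]}(\UO(G),\widetilde{L})$, where $\widetilde{L}$ denotes the kernel of the surjection $(\C[G]\otimes_{\C[N]}\C)^{I}\twoheadrightarrow M$ appearing at the beginning of the proof (I write $\widetilde{L}$ to avoid the notational clash with $L=\im i_k$). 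So by monotonicity of the dimension in submodules (\cref{lem: continuity_dim}) it suffices to show $\dim_{\UO(G)}\Tor_{k-1}^{\C[G]}(\UO(G),\widetilde{L})=0$.

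The key observation is that $\widetilde{L}$ is a $\C[G]$-module on which $N$ still acts trivially. Indeed, $\C[G]\otimes_{\C[N]}\C\cong\C[G/N]$ carries the trivial $N$-action, which is inherited by the direct sum $(\C[G]\otimes_{\C[N]}\C)^{I}$ and hence by its $\C[G]$-submodule $\widetilde{L}$. Since the hypothesis provides $\beta_i^{(2)}(N)=0$ for all $0\le i\le k-1$, I can then invoke \cref{prop: control_dim_modules} on $\widetilde{L}$ in degree $k-1$, which yields exactly the required vanishing $\dim_{\UO(G)}\Tor_{k-1}^{\C[G]}(\UO(G),\widetilde{L})=0$. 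The claim then follows by additivity of dimension on the short exact sequence
\[
0\longrightarrow K\longrightarrow \Tor_k^{\C[G]}(\UO(G),M)\longrightarrow \Tor_k^{\C[G]}(\UO(G),M)/K\longrightarrow 0.
\]

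The argument is essentially a short diagram chase, so I do not anticipate any significant obstacle. The only step that requires attention is recognising that the kernel $\widetilde{L}$ still falls inside the class of $\C[G]$-modules on which $N$ acts trivially, so that \cref{prop: control_dim_modules} is applicable; once this is noticed, the proof is immediate.
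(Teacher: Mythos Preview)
Your argument is correct and follows essentially the same route as the paper: both apply \cref{prop: control_dim_modules} to the modules appearing to the right of $j_k$ in the long exact sequence (\ref{LESUG}) (in your case just $\Tor_{k-1}^{\C[G]}(\UO(G),\widetilde{L})$, in the paper's phrasing ``all the modules at the right hand side''), noting that $N$ acts trivially on them, and then conclude by exactness of $\dim_{\UO(G)}$. Your identification of the notational clash and your explicit verification that $N$ acts trivially on $\widetilde{L}$ are welcome clarifications, but the underlying strategy is identical.
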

        \begin{proof}
            Note that $\beta_i^{(2)}(N)=0$ for all $0\leq i\leq k-1$. So according to \cref{prop: control_dim_modules} the dimension of all the modules at the right hand side of (\ref{LESUG}) is zero; and thus by exactness of dimension we conclude
            \[
            \dim_{\scalebox{0.8}{$\UO(G)$}}\left(\Tor_k^{\scalebox{0.8}{$\C[G]$}}(\UO(G),M)\right)=\dim_{\scalebox{0.8}{$\UO(G)$}}(K).
            \]
        \end{proof}
        \begin{claim}\label{claim: dimL_is_finite}
            We have that $\dim_{\scalebox{0.8}{$\UO(N)$}}(J)$ is finite.
        \end{claim}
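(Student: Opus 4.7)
The plan is to bound $\dim_{\scalebox{0.8}{$\UO(N)$}}(L)$ by recognizing $L=\im i_k$ as a submodule of $\Tor_k^{\scalebox{0.8}{$\C[G]$}}(S,M)$ (directly from the long exact sequence (\ref{LESS})) and computing the dimension of this Tor via a change-of-rings identity that turns it into the quantity we have assumed finite. Since $\UO(N)$ is regular, $\dim_{\scalebox{0.8}{$\UO(N)$}}$ is exact and in particular monotone under taking submodules, so it suffices to prove
\[
\dim_{\scalebox{0.8}{$\UO(N)$}}\bigl(\Tor_k^{\scalebox{0.8}{$\C[G]$}}(S,M)\bigr)<\infty.
\]

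The key algebraic input is the identification $S\cong\UO(N)\otimes_{\scalebox{0.8}{$\C[N]$}}\C[G]$ as a $\UO(N)$-$\C[G]$-bimodule. Fixing a transversal $T\subseteq G$ for the right cosets of $N$, one has $\C[G]=\bigoplus_{t\in T}\C[N]\,t$ as a free left $\C[N]$-module and, matching representatives with the crossed product decomposition, $S=\bigoplus_{t\in T}\UO(N)\,t$ as a left $\UO(N)$-module. The multiplication map $u\otimes x\mapsto ux$ taken inside the ambient ring $\UO(G)$ is then manifestly a bimodule isomorphism; the crossed product twisting by $G/N$ is automatically respected because both sides are being computed inside $\UO(G)$. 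Consequently the functor $S\otimes_{\scalebox{0.8}{$\C[G]$}}(-)$ from left $\C[G]$-modules to left $\UO(N)$-modules agrees with first restricting scalars along $\C[N]\hookrightarrow\C[G]$ and then applying $\UO(N)\otimes_{\scalebox{0.8}{$\C[N]$}}(-)$.

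Next I would pick a projective resolution $P_\bullet\to M$ over $\C[G]$ and observe that, since $\C[G]$ is free (hence flat) as a right $\C[N]$-module, every projective left $\C[G]$-module remains projective after restriction of scalars to $\C[N]$. Therefore $P_\bullet\to M$ is simultaneously a projective resolution of $M$ as a left $\C[N]$-module, and the functor identity above yields the chain
\[
\Tor_k^{\scalebox{0.8}{$\C[G]$}}(S,M)=H_k\bigl(S\otimes_{\scalebox{0.8}{$\C[G]$}}P_\bullet\bigr)=H_k\bigl(\UO(N)\otimes_{\scalebox{0.8}{$\C[N]$}}P_\bullet\bigr)=\Tor_k^{\scalebox{0.8}{$\C[N]$}}(\UO(N),M)
\]
as left $\UO(N)$-modules. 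Combined with the hypothesis of \cref{thm: module_beta_k} this gives $\dim_{\scalebox{0.8}{$\UO(N)$}}(\Tor_k^{\scalebox{0.8}{$\C[G]$}}(S,M))<\infty$, hence $\dim_{\scalebox{0.8}{$\UO(N)$}}(L)<\infty$ by the monotonicity observation.

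I do not expect any real obstacle: the whole argument is a careful unwinding of a Shapiro-type change of rings, and the only delicate point is verifying the bimodule isomorphism $S\cong\UO(N)\otimes_{\scalebox{0.8}{$\C[N]$}}\C[G]$ together with its compatibility with the crossed product multiplication on $S$.
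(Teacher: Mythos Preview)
Your proposal is correct and follows essentially the same route as the paper: bound $\dim_{\scalebox{0.8}{$\UO(N)$}}(L)$ by $\dim_{\scalebox{0.8}{$\UO(N)$}}\bigl(\Tor_k^{\scalebox{0.8}{$\C[G]$}}(S,M)\bigr)$ using exactness, then identify the latter with $\dim_{\scalebox{0.8}{$\UO(N)$}}\bigl(\Tor_k^{\scalebox{0.8}{$\C[N]$}}(\UO(N),M)\bigr)$ via a Shapiro-type change of rings. The paper simply invokes ``Shapiro's lemma'' for this identification, whereas you spell out explicitly the bimodule isomorphism $S\cong\UO(N)\otimes_{\scalebox{0.8}{$\C[N]$}}\C[G]$ and the resolution argument; this extra care is warranted since the version of Shapiro needed here (inducing on the \emph{right} variable of $\Tor$, using left-flatness of $\C[G]$ over $\C[N]$) is the mirror image of the one the paper states, but the content is the same.
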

        \begin{proof}
            Since $J$ is a $S$-submodule of $\Tor_k^{\scalebox{0.8}{$\C[G]$}}(S,M)$, in particular, it is an $\UO(N)$-submodule. Then by Shapiro's lemma along with exactness we get
            \[
            \dim_{\scalebox{0.8}{$\UO(N)$}}(J)\leq \dim_{\scalebox{0.8}{$\UO(N)$}}\left(\Tor_k^{\scalebox{0.8}{$\C[G]$}}(S,M)\right)=\dim_{\scalebox{0.8}{$\UO(N)$}}\left(\Tor_k^{\scalebox{0.8}{$\C[N]$}}(\UO(N),M)\right)
            \]
            which is finite by hypothesis.
        \end{proof}
        \begin{claim}\label{claim: dim_S_L_controls_dim_K}
            We have that $\dim_{\scalebox{0.8}{$\UO(G)$}}(\UO(G)\otimes_S J)\geq \dim_{\scalebox{0.8}{$\UO(G)$}}(K)$.
        \end{claim}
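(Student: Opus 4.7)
The plan is to exploit the commutative diagram \eqref{lem: commutative_diagram_Tor} from the preliminaries, instantiated with $R = \C[G]$, $S = \UO(N) \ast G/N$, $T = \UO(G)$ and the short exact sequence $0 \to L \to M_2 \to M \to 0$ where $M_2 = (\C[G] \otimes_{\scalebox{0.8}{$\C[N]$}} \C)^{I}$. It specializes to
\[
\begin{tikzcd}
\UO(G) \otimes_S \Tor_k^{\scalebox{0.8}{$\C[G]$}}(S, M_2) \arrow{d}{b} \arrow{r}{a} & \UO(G) \otimes_S L \arrow{d}{d} \\
\Tor_k^{\scalebox{0.8}{$\C[G]$}}(\UO(G), M_2) \arrow{r}{c} & K
\end{tikzcd}
\]
with $a$ and $c$ surjective, and $b,d$ the natural base-change maps. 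The inequality $\dim_{\scalebox{0.8}{$\UO(G)$}}(\UO(G) \otimes_S L) \geq \dim_{\scalebox{0.8}{$\UO(G)$}}(K)$ follows at once if $d$ is a $\dim_{\scalebox{0.8}{$\UO(G)$}}$-surjection. By commutativity and the surjectivity of $a$, $\im d = c(\im b)$; hence $\coker d = K/c(\im b)$ is a quotient of $\coker b = \Tor_k^{\scalebox{0.8}{$\C[G]$}}(\UO(G), M_2)/\im b$ through the surjection induced by $c$. The task therefore reduces to proving that $b$ is a $\dim_{\scalebox{0.8}{$\UO(G)$}}$-surjection.

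For this latter step my plan is to invoke a Grothendieck change-of-rings spectral sequence. Taking a $\C[G]$-projective resolution $P_\bullet$ of $M_2$ and observing that $\UO(G) \otimes_{\scalebox{0.8}{$\C[G]$}} P_\bullet \cong \UO(G) \otimes_S (S \otimes_{\scalebox{0.8}{$\C[G]$}} P_\bullet)$, one obtains
\[
E^2_{pq} = \Tor_p^S(\UO(G), \Tor_q^{\scalebox{0.8}{$\C[G]$}}(S, M_2)) \;\Longrightarrow\; \Tor_{p+q}^{\scalebox{0.8}{$\C[G]$}}(\UO(G), M_2),
\]
with the edge morphism at $(0,k)$ being precisely $b$. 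Shapiro's lemma (applicable since $\C[G]$ is free over $\C[N]$), combined with the decomposition of $S$ as a direct sum of twisted copies of $\UO(N)$ as a right $\C[N]$-module, then yields $\dim_{\scalebox{0.8}{$\UO(N)$}}(\Tor_q^{\scalebox{0.8}{$\C[G]$}}(S, M_2)) = [G:N] \cdot \beta_q^{(2)}(N)$, which vanishes for $q < k$ by the standing hypothesis. Hence \cref{thm: finite_dim_ind_zero_dim} annihilates each $E^2_{0q}$ in $\dim_{\scalebox{0.8}{$\UO(G)$}}$ for $q < k$.

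The principal obstacle is to similarly kill the $E^2_{pq}$ with $p \geq 1$ and $p + q = k$: this amounts to a dim-flatness statement for $\UO(G)$ over $S$, namely that $\dim_{\scalebox{0.8}{$\UO(G)$}}(\Tor_p^S(\UO(G), X)) = 0$ for $p \geq 1$ whenever $\dim_{\scalebox{0.8}{$\UO(N)$}}(X)$ is finite. I expect to establish this by inductive dimension-shifting: present $X$ as a quotient of a free $S$-module whose kernel retains finite $\dim_{\scalebox{0.8}{$\UO(N)$}}$, pass to the long exact sequence for $\Tor^S(\UO(G),-)$, and combine \cref{thm: finite_dim_ind_zero_dim} at $p = 0$ with the inductive hypothesis in degree $p-1$. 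Once this vanishing is in place, the spectral sequence collapses to $E^\infty_{0,k}$ in $\dim_{\scalebox{0.8}{$\UO(G)$}}$, which yields the required dim-surjectivity of $b$ and hence of $d$.
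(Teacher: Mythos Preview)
Your reduction via the commutative square is correct, and reducing the claim to the $\dim_{\UO(G)}$-surjectivity of $b$ is a valid strategy. However, the dimension-shifting argument you sketch for the ``principal obstacle'' breaks down. You propose to present $X$ as a quotient of a free $S$-module whose kernel retains finite $\dim_{\UO(N)}$; but $N$ has infinite index in $G$, so already $\dim_{\UO(N)}(S) = \infty$, and every nonzero free $S$-module---hence also the kernel of any free presentation---has infinite $\dim_{\UO(N)}$. Your inductive hypothesis therefore never applies to the kernel, and the induction does not get off the ground. (The obstacle can be repaired: for $q<k$ one actually has $\dim_{\UO(N)}\bigl(\Tor_q^{\C[G]}(S,M_2)\bigr)=0$, not merely finite, and then \cref{lem: dim_zero_extends_by_left} applied with $R=\UO(N)$, $L=\UO(G)$ kills all the $E^2_{p,q}$ with $p\geq 1$ directly, once one checks the requisite flatness and compatibility conditions. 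But this is a different argument from the one you outlined.)

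More to the point, the paper avoids the spectral sequence altogether by showing that $b$ is a genuine \emph{isomorphism}. The key observation is that both $S=\UO(N)\ast G/N$ and $\UO(G)$ are flat as right $\UO(N)$-modules: $S$ is free, and $\UO(G)$ is flat because $\VN(N)\hookrightarrow\VN(G)$ is trace-preserving (\cref{lem: trace_preserving_flat}) and $\UO(G)$ is an Ore localization of $\VN(G)$. Combining this with Shapiro's lemma for the free extension $\C[N]\subseteq\C[G]$ yields
\[
\UO(G)\otimes_S\Tor_k^{\C[G]}(S,M_2)\;\cong\;\UO(G)\otimes_{\UO(N)}\Tor_k^{\C[N]}(\UO(N),\C^I)\;\cong\;\Tor_k^{\C[N]}(\UO(G),\C^I)\;\cong\;\Tor_k^{\C[G]}(\UO(G),M_2),
\]
and one checks that this chain of isomorphisms realizes $b$. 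Hence $d$ is honestly surjective and the claim follows without any dimension estimate.
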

        \begin{proof}
            First observe that by (\ref{lem: commutative_diagram_Tor}) we have the commutative diagram
            \[
            \begin{tikzcd}
                \UO(G) \otimes_S\Tor_k^{\scalebox{0.8}{$\C[G]$}}(S,(\C[G]\otimes_{\scalebox{0.8}{$\mathbb{C}[N]$}} \mathbb{C})^{I})\arrow{r}\arrow{d}&\UO(G)\otimes_S J\arrow{d}\\
                \Tor_k^{\scalebox{0.8}{$\C[G]$}}(\UO(G),(\C[G]\otimes_{\scalebox{0.8}{$\mathbb{C}[N]$}} \mathbb{C})^{I})\arrow{r}&K
            \end{tikzcd}.
            \]
            Recall that we know that both horizontal maps are surjective. Therefore, a chase in the diagram shows that the right vertical map is surjective provided that the left vertical map is an isomorphism, and that is the case. Indeed, by Shapiro's lemma
            \[
            \UO(G)\otimes_S\Tor_k^{\scalebox{0.8}{$\C[G]$}}(S,(\C[G]\otimes_{\scalebox{0.8}{$\mathbb{C}[N]$}} \mathbb{C})^{I})\cong \UO(G)\otimes_S \Tor_k^{\scalebox{0.8}{$\C[N]$}}(S,\mathbb{C}^I).
            \]
            Moreover, $\UO(N)\subseteq S$ is a regular subring, so
            \begin{align*}
                \UO(G)\otimes_S \Tor_k^{\scalebox{0.8}{$\C[N]$}}(S,\mathbb{C}^I)&\cong \UO(G)\otimes_S \Tor_k^{\scalebox{0.8}{$\C[N]$}}(S\otimes_{\scalebox{0.8}{$\UO(N)$}}\UO(N),\mathbb{C}^I)\\
                &\cong \UO(G)\otimes_S S\otimes_{\scalebox{0.8}{$\UO(N)$}} \Tor_k^{\scalebox{0.8}{$\C[N]$}}(\UO(N),\mathbb{C}^I)\\
                &\cong \UO(G)\otimes_{\scalebox{0.8}{$\UO(N)$}} \Tor_k^{\scalebox{0.8}{$\C[N]$}}(\UO(N),\mathbb{C}^I)\\
                &\cong \Tor_k^{\scalebox{0.8}{$\C[N]$}}(\UO(G)\otimes_{\scalebox{0.8}{$\UO(N)$}} \UO(N),\mathbb{C}^I)\\
                &\cong \Tor_k^{\scalebox{0.8}{$\C[N]$}}(\UO(G),\mathbb{C}^I).
            \end{align*}
            Finally, once again by Shapiro's lemma
            \[
            \Tor_k^{\scalebox{0.8}{$\C[N]$}}(\UO(G),\mathbb{C}^I)\cong \Tor_k^{\scalebox{0.8}{$\C[G]$}}(\UO(G),(\C[G]\otimes_{\scalebox{0.8}{$\mathbb{C}[N]$}} \mathbb{C})^{I}),
            \]
            which shows the claim.
        \end{proof}
        Now by \cref{claim: dimL_is_finite} and \cref{thm: finite_dim_ind_zero_dim}, $\dim_{\scalebox{0.8}{$\UO(G)$}}(\UO(G)\otimes_S J)=0$. However, according to \cref{claim: dim_S_L_controls_dim_K} $\dim_{\scalebox{0.8}{$\UO(G)$}}(\UO(G)\otimes_S J)\geq \dim_{\scalebox{0.8}{$\UO(G)$}}(K)$. Therefore, $\dim_{\scalebox{0.8}{$\UO(G)$}}(K)$ vanishes, and so it does $\dim_{\scalebox{0.8}{$\UO(G)$}}\Tor_k^{\scalebox{0.8}{$\C[G]$}}(\UO(G),M)$ by \cref{claim: dimK_beta_k_M}.
    \end{proof}

\section{New Vanishing Criteria}\label{sec: new_vanishing_crit}
    
    Firstly, we show \cref{thm: groupoids_beta_k}. The idea of the proof is a combination of the Peterson and Thom's approach and a new algebraic point of view. Recall that $\gpd$ stands for the translation groupoid $X_{G,H}\rtimes G$, $\Hgpd$ for the translation subgroupoid $X_{G,H}\rtimes H$ and $\gpd_n$ for the intermediate subgroupoids of \cref{subsect: alg_struct_gpds} where $H$ is the infinite index subgroup of $G$ from \cref{thm: groupoids_beta_k} and $X_{G,H}=\prod_{gH\in G/H}[0,1]$ with $G$ acting by left multiplication. In contrast to the group situation, we now have at our disposal a family of finite index subgroupoids which have arbitrary high index and contain $\Hgpd$. Specifically, since the index of $H$ in $G$ is infinite, for each positive integer $n$ there is a subgroupoid $\gpd_n$ of $\gpd$ of index $n$ containing $\Hgpd$ (see \cref{subsect: alg_struct_gpds}). Moreover, by \cref{cor: multiplicity_betti_numb_gpds}, for each positive integer $k$ it holds that
    \[
    \beta_k^{(2)}(\gpd)=\frac{\beta_k^{(2)}(\gpd_n)}{n}.
    \]
    In addition, according to \cref{lem: equality_betti_numb_gps_gpds}, $\beta_k^{(2)}(H)=\beta_k^{(2)}(\Hgpd)$ and $\beta_k^{(2)}(G)=\beta_k^{(2)}(\gpd)$. Thus the proof would conclude if we can show that under the theorem's conditions
    \begin{equation}\label{eqtn: mult_betti_Hgpd_and_gpd_n}
        \beta_k^{(2)}(\Hgpd)\geq \beta_k^{(2)}(\gpd_n).
    \end{equation}
    This is exactly the point of \cite[Theorem 6.9]{PetersonThom_Freiheit} for $k=1$ based on $\gpd$-cohomology. Instead of using $\gpd$-cocycles, we suggest a module theoretic approach. Before proving the above inequality we have to establish two preliminary results. Recall that $N$ is a normal subgroup in $G$ that is contained in $H$.

    \begin{lem}\label{lem: trivial_action}
        With the above notation, $N$ acts trivially on the left $\C[\gpd_n]$-module $\C[\gpd_n]\otimes_{\scalebox{0.8}{$\mathbb{C}[X_{G,H}\rtimes N]$}}L^{\infty}(\gpd^0)$.
    \end{lem}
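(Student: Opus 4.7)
The strategy is to exploit the normality of $N \triangleleft G$ together with the containment $N \leq H$ to reduce the $N$-action to the trivial one. The starting point is the observation that $N$ acts trivially on the coset space $G/H$: for any $n \in N$ and $g \in G$, $ngH = g(g^{-1}ng)H = gH$ because $g^{-1}ng \in N \leq H$. Hence $N$ acts trivially on $X_{G,H} = \prod_{gH\in G/H}[0,1]$. I would record two consequences of this at once: for each $n \in N$ the corresponding element $u_n \in \C[\gpd_n]$, namely the characteristic function of $\gpd^0 \times \{n^{-1}\}$, in fact lies in the subring $\C[X_{G,H}\rtimes N]$, because the triviality of the $N$-action makes $(x,n^{-1})$ preserve the partition $\{Y_i\}$ trivially; and a direct computation through the augmentation shows that $u_n$ acts as the identity on $L^{\infty}(\gpd^0)$.

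Next, given a simple tensor $a\otimes b$, I would compute $u_n\cdot (a\otimes b)=(u_n a)\otimes b$ and aim for it to equal $a\otimes b$. The main algebraic identity to establish is that for $a=\chi_E$ with $E\subseteq \gpd^0\times\{g\}$ a measurable slice concentrated at a fixed $g\in G$, a direct convolution computation in $\C[\gpd]$ yields
\[
u_n\,\chi_E = \chi_E\,u_{gng^{-1}},
\]
which is nothing but the normality relation $gng^{-1}\in N$ made explicit in the groupoid ring. Since $u_{gng^{-1}}$ lies in $\C[X_{G,H}\rtimes N]$ and acts trivially on $L^{\infty}(\gpd^0)$ by the first paragraph, I would move it across the tensor product and conclude $u_n\chi_E\otimes b=\chi_E\otimes u_{gng^{-1}}\cdot b=\chi_E\otimes b$.

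For the general case, I would use that $u_n$ commutes with $L^{\infty}(\gpd^0)$ inside $\C[\gpd]$ (once more because $n$ acts trivially on $\gpd^0$), so that left multiplication by $u_n$ is $L^{\infty}(\gpd^0)$-linear; combined with \cref{lem: gpd_ring_are_finite_sums}, which describes $\C[\gpd_n]$ as the $L^{\infty}(\gpd^0)$-module generated by characteristic functions of slices, plus the decomposition of any such slice into its constant-$G$-coordinate pieces, the previous identity extends to all of $\C[\gpd_n]$. The mild obstacle I foresee is the bookkeeping needed to handle the measurable decomposition into single-$G$-coordinate slices (which is at most countable), but thanks to the $L^{\infty}(\gpd^0)$-linearity of left multiplication by $u_n$, this should amount to a routine verification rather than a substantive difficulty.
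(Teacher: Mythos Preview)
Your intuition is right and your reduction to slices $\chi_E$ via \cref{lem: gpd_ring_are_finite_sums} is the correct first move; the identity $u_n\chi_E=\chi_E\,u_{g n g^{-1}}$ for a slice concentrated at a single $g\in G$ is also correct. The gap is in the last step. A general slice $E$ with injective $s\vert_E,r\vert_E$ decomposes into \emph{countably} many single-$g$ pieces $E_g$, and in the \emph{algebraic} tensor product $\C[\gpd_n]\otimes_{\C[X_{G,H}\rtimes N]}L^\infty(\gpd^0)$ you cannot split the simple tensor $(u_n\chi_E)\otimes b$ into a countable sum $\sum_g(\chi_{E_g}u_{gng^{-1}})\otimes b$ and move each $u_{gng^{-1}}$ across separately. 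The $L^\infty(\gpd^0)$-linearity of left multiplication by $u_n$ only buys you \emph{finite} linear combinations; it does not let you pass countable pointwise identities in $\C[\gpd_n]$ through the tensor product. So this is not bookkeeping --- it is the substantive point.

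The fix, and this is exactly what the paper does, is to package all the conjugates $g_x n g_x^{-1}$ into a \emph{single} element of $\C[X_{G,H}\rtimes N]$ that works for the whole slice $E$ at once: one sets $E_l=\{(g_x\cdot x,\,g_x n^{-1}g_x^{-1}):(x,g_x)\in E\}$, checks that $E_l$ is Borel (this uses injectivity of $s\vert_E$ and $r\vert_E$ and a Lusin--Souslin argument), observes $\chi_{E_l}\in\C[X_{G,H}\rtimes N]$ with $u_n\chi_E=\chi_E\chi_{E_l}$, and then (after adjoining $\chi_{s(E_l)^c}$ to make the augmentation computation clean) moves this one element across the tensor and verifies it acts trivially on $L^\infty(\gpd^0)$. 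In effect the paper's $\chi_{E_l}$ \emph{is} your would-be countable sum $\sum_g\chi_{s(E_g)}u_{gng^{-1}}$, recognised as a legitimate single element of the groupoid ring; proving that recognition (Borel, bounded $S$ and $R$) is the content you are missing.
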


    \begin{proof}
        As in \cref{lem: gpd_ring_are_finite_sums}, every $\phi\in \C[\gpd_n]$ can be written as a finite sum $\phi=\sum_{j=1}^m f_j\cdot \chi_{E_j}$, where $f_j\in \essbdd$ and each Borel subset $E_j\subseteq \gpd_n$ has the property that $s\vrule_{E_j}$ and $r\vrule_{E_j}$ are injective. Note that since $N$ fixes all points of $X_{G,H}$, then $l\cdot f_j=f_j\cdot l$ for all $l\in N$ and all $1\leq j \leq m$. So it suffices to show that any given $l\in N$ acts trivially on elements of the form $\chi_E\otimes f$ with $E\subseteq \gpd_n$ a Borel subset such that $s\vrule_{E}$ and $r\vrule_{E}$ are injective, and $f\in \essbdd$. 
                
        First, observe that since $s$ is injective on $E$, then $E$ has the following form $\{(x,g_x):x\in A, g_x\in G\}$ for some $A\subseteq X_{G,H}$. Hence, $l\cdot \chi_E(y,h)=\chi_E(y,hl)=\chi_{El^{-1}}(y,h)$ where $El^{-1}:=\{(x,g_xl^{-1}):(x,g_x)\in E\}$, which is Borel. Now, since $N$ is a normal subgroup of $G$, for each $g_x$ there is some $l_x\in N$ such that $g_xl^{-1}=l_x g_x$. Thus we set $E_l:=\{(g_x x,l_x):(x,g_x)\in E\}\subseteq \gpd_n$. Note that $E_l$ is a Borel subset. Indeed, since the composition map in $\gpd_n$, call it $\theta$, is Borel, we have that
        \[
        \theta^{-1}(El^{-1})=\{((h\cdot x, l_x g_x h^{-1}),(x,h)): x\in A, (x,h)\in \gpd_n\}
        \] 
        is Borel. Moreover, $\gpd_n\times E$ is also a Borel subset of $\gpd_n\times \gpd_n$; and thus so is
        \[
        \theta^{-1}(El^{-1})\cap \gpd_n\times E=\{((g_x \cdot x,l_x),(x,g_x)):x\in A\},
        \]      
        where the equality follows from the fact that $s(x,h)=s(x,g_x)$ and the source map is injective on $E$. Finally, since projecting onto the first coordinate is a Borel map, and this projection map is injective on $\theta^{-1}(El^{-1})\cap \gpd_n\times E$ because $r$ is injective on $E$, the image of the projection map onto the first coordinate, which is precisely $E_l$, is also Borel \cite[Corollary 15.2]{Kech_setth}.

        Therefore, $\chi_{E_l}\in \mathbb{C}[X_{G,H}\rtimes N]$ and satisfies $\chi_{El^{-1}}=\chi_E\cdot \chi_{E_l}$. Moreover, since $s$ is injective on $E_l$, because $r$ is injective on $E$, again by \cite[Corollary 15.2]{Kech_setth}, we get that $s(E_l)\subseteq \gpd^0$ is Borel; and hence so is $s(E_l)^c:=\gpd^0\setminus s(E_l)$. Note that $\chi_E\chi_{s(E_l)^c}=0$. So $\psi_l:=\chi_{E_l}+\chi_{s(E_l)^c}$ is an element of $\mathbb{C}[X_{G,H}\rtimes N]$ and satisfies $l\cdot(\chi_E\otimes f)=l\chi_E\otimes f=\chi_E\chi_{E_l}\otimes f=\chi_E\psi_l\otimes f=\chi_E\otimes \psi_l f$.
                
	    However, for $(x,1)\in \gpd^0$ it holds that
        \begin{align*}
            \epsilon(\psi_{l} f)(x,1)&=\sum_{k\in N}\psi_{l} f(x,k)\\
            &=\sum_{k\in N}\sum_{(x_2,k_2)(x_1,k_1)=(x,k)}\psi_{l}(x_1,k_1)f(x_2,k_2)\\
            &=\sum_{k\in N}\psi_{l}(x,k)f(x,1)\\
            &=f(x,1)
        \end{align*}
	    where the last equality follows by the construction of $\psi_l$. Therefore, putting all together, $l$ acts trivially on $\chi_E\otimes f$ as we wanted.
    \end{proof}

    \begin{prop}\label{prop: vanish_dim_mod_criteria_for_gpds}
        With the above notation, let $M$ be a left $\C[\gpd_n]$-module in which $N$ acts trivially and $k$ a non-negative integer. Suppose that $\beta_i^{(2)}(N)$ vanishes for all $0\leq i\leq k$. Then
        \[
        \dim_{\scalebox{0.8}{$\VN(\gpd_n)$}}\left(\Tor_i^{\scalebox{0.8}{$\C[\gpd_n]$}}(\VN(\gpd_n),M)\right)=0
        \]
        for all $0\leq i \leq k$.
    \end{prop}

    \begin{proof}
            First note that since $N$ acts trivially on $M$, we have the following short exact sequence of left $\C[\gpd_n]$-modules
            \[
            0\rightarrow \ker \pi\rightarrow (\C[\gpd_n]\otimes_{\scalebox{0.8}{$\essbdd\ast N$}}L^{\infty}(\gpd^0))^{I}\xrightarrow{\pi} M\rightarrow 0
            \]
            for some cardinal number $I$. We argue by induction on $k$. For $k=0$, extending scalars on the above sequence yields to a surjective map
            \[
            \VN(\gpd_n)\otimes_{\scalebox{0.8}{$\C[\gpd_n]$}}(\C[\gpd_n]\otimes_{\scalebox{0.8}{$\essbdd\ast N$}}L^{\infty}(\gpd^0))^{I}\rightarrow \VN(\gpd_n)\otimes_{\scalebox{0.8}{$\C[\gpd_n]$}}M.
            \]
            Thus by \cref{defn: BSMRF} 
            \[
            \dim_{\scalebox{0.8}{$\VN(\gpd_n)$}}\left(\VN(\gpd_n)\otimes_{\scalebox{0.8}{$\C[\gpd_n]$}}M\right)
            \]
            is at most
            \[
            \dim_{\scalebox{0.8}{$\VN(\gpd_n)$}}\left(\VN(\gpd_n)\otimes_{\scalebox{0.8}{$\C[\gpd_n]$}}(\C[\gpd_n]\otimes_{\scalebox{0.8}{$\essbdd\ast N$}}L^{\infty}(\gpd^0))^{I}\right).
            \]
            \begin{claim}\label{claim: cgn_homomph_tensor_N}
                The natural left $\C[\gpd_n]$-homomorphism $\rho:\C[\gpd_n]\otimes_{\scalebox{0.8}{$\essbdd\ast N$}}L^{\infty}(\gpd^0)$ $\rightarrow \C[\gpd_n]\otimes_{\scalebox{0.8}{$\mathbb{C}[X_{G,H}\rtimes N]$}}L^{\infty}(\gpd^0)$ is a left $\dim_{\scalebox{0.8}{$L^{\infty}(\gpd^0)$}}$-isomorphism.
            \end{claim}
            \begin{proof}
                Apply \cref{thm: dim_isomph_changing_scalars} to $\C[\gpd_n]\otimes_{\scalebox{0.8}{$\essbdd\ast N$}}L^{\infty}(\gpd^0)$ and $\C[\gpd_n]\otimes_{\scalebox{0.8}{$\mathbb{C}[X_{G,H}\rtimes N]$}}L^{\infty}(\gpd^0)$. 
            \end{proof}
            \begin{claim}\label{claim: equality_betti_subgpd_n}
                For every non-negative integer $i$ it holds that 
                \[
                \dim_{\scalebox{0.8}{$\VN(\gpd_n)$}}\left(\Tor_i^{\scalebox{0.8}{$\C[\gpd_n]$}}(\VN(\gpd_n),\C[\gpd_n]\otimes_{\scalebox{0.8}{$\essbdd\ast N$}}L^{\infty}(\gpd^0))\right)=\beta_i^{(2)}(N).
                \]
            \end{claim}
            \begin{proof}
                By \cref{claim: cgn_homomph_tensor_N}, we make use of \cref{lem: dim_isomph_tensor_left} to get that
                \[
                \dim_{\scalebox{0.8}{$\VN(\gpd_n)$}}\left(\Tor_i^{\scalebox{0.8}{$\C[\gpd_n]$}}(\VN(\gpd_n),\C[\gpd_n]\otimes_{\scalebox{0.8}{$\essbdd\ast N$}}L^{\infty}(\gpd^0))\right)
                \]
                equals to
                \[
                \dim_{\scalebox{0.8}{$\VN(\gpd_n)$}}\left(\Tor_i^{\scalebox{0.8}{$\C[\gpd_n]$}}(\VN(\gpd_n),\C[\gpd_n]\otimes_{\scalebox{0.8}{$\mathbb{C}[X_{G,H}\rtimes N]$}}L^{\infty}(\gpd^0))\right).
                \]
                To conclude, note that the proof of \cref{prop: shapiro_dim_for_gpds} still holds in this case. Hence
                \[
                \dim_{\scalebox{0.8}{$\VN(\gpd_n)$}}\left(\Tor_i^{\scalebox{0.8}{$\C[\gpd_n]$}}(\VN(\gpd_n),\C[\gpd_n]\otimes_{\scalebox{0.8}{$\mathbb{C}[X_{G,H}\rtimes N]$}}L^{\infty}(\gpd^0))\right)=\beta_i^{(2)}(N).
                \]
                \end{proof}
            Thus, by exactness of dimension, \cref{claim: equality_betti_subgpd_n} and the fact that $\beta_0^{(2)}(N)$ vanishes, we conclude that 
            \[
            \dim_{\scalebox{0.8}{$\VN(\gpd_n)$}}(\VN(\gpd_n)\otimes_{\scalebox{0.8}{$\C[\gpd_n]$}}M)=0.
            \]
            Now assume the statement holds for $k-1$. Consider the long exact sequence associated to the initial exact sequence
            \begin{align*}
                &\Tor_k^{\scalebox{0.8}{$\C[\gpd_n]$}}(\VN(\gpd_n),(\C[\gpd_n]\otimes_{\scalebox{0.8}{$\essbdd\ast N$}}L^{\infty}(\gpd^0))^{I})\rightarrow\Tor_k^{\scalebox{0.8}{$\C[\gpd_n]$}}(\VN(\gpd_n),M)\rightarrow\ldots \\
                &\ldots\rightarrow \VN(\gpd_n)\otimes_{\scalebox{0.8}{$\C[\gpd_n]$}}\ker \pi\rightarrow \VN(\gpd_n)\otimes_{\scalebox{0.8}{$\C[\gpd_n]$}}(\C[\gpd_n]\otimes_{\scalebox{0.8}{$\essbdd\ast N$}}L^{\infty}(\gpd^0))^{I}\\
                &\rightarrow \VN(\gpd_n)\otimes_{\scalebox{0.8}{$\C[\gpd_n]$}}M\rightarrow 0.
		  \end{align*}
   
            In order to apply our induction hypothesis, we need that $N$ acts trivially on $\ker \pi$. This may not be the case though, but we can slightly modify our situation to a suitable one. Note that by \cref{claim: cgn_homomph_tensor_N} and exactness of dimension, the $\C[\gpd_n]$-homomorphism $\rho':(\C[\gpd_n]\otimes_{\scalebox{0.8}{$\essbdd\ast N$}}L^{\infty}(\gpd^0))^{I}\rightarrow (\C[\gpd_n]\otimes_{\scalebox{0.8}{$\C[X_{G,H}\rtimes N]$}}L^{\infty}(\gpd^0))^{I}$ is still a left $\dim_{\scalebox{0.8}{$\essbdd$}}$-isomorphism. Hence, $\pi(\ker \rho')$ is a $\C[\gpd_n]$-submodule of $M$ whose $\dim_{\scalebox{0.8}{$L^{\infty}(\gpd^0)$}}$ is zero. In other words, $M\rightarrow M/\pi(\ker \rho')$ is a left $\C[\gpd_n]$-homomorphism which is also a left $\dim_{\scalebox{0.8}{$L^{\infty}(\gpd^0)$}}$-isomorphism. Then we have the following commutative diagram of left $\C[\gpd_n]$-modules
            \[
            \begin{tikzcd}
                0\arrow{r}&\ker \pi\arrow{r}\arrow{d}&(\C[\gpd_n]\otimes_{\scalebox{0.8}{$\essbdd\ast N$}}L^{\infty}(\gpd^0))^{I}\arrow{r}{\pi}\arrow{d}{\rho'}&M\arrow{r}\arrow{d}&0\\
                0\arrow{r}&\ker \pi'\arrow{r}&(\C[\gpd_n]\otimes_{\scalebox{0.8}{$\mathbb{C}[X_{G,H}\rtimes N]$}}L^{\infty}(\gpd^0))^{I}\arrow{r}{\pi'}&M/\pi(\ker \rho')\arrow{r}&0
            \end{tikzcd}
            \]
            where $\pi'$ is induced from $\pi$. Since the central and right maps are left $\dim_{\scalebox{0.8}{$L^{\infty}(\gpd^0)$}}$-isomorphisms, so is $\ker \pi \rightarrow \ker \pi'$.	Thus, another use of \cref{lem: dim_isomph_tensor_left} gives us that
            \begin{align}\label{eqtn: dim_equality_ker}
                \dim_{\scalebox{0.8}{$\VN(\gpd_n)$}}\left(\Tor_i^{\scalebox{0.8}{$\C[\gpd_n]$}}(\VN(\gpd_n),\ker \pi)\right)=\dim_{\scalebox{0.8}{$\VN(\gpd_n)$}}\left(\Tor_i^{\scalebox{0.8}{$\C[\gpd_n]$}}(\VN(\gpd_n),\ker \pi')\right)
            \end{align}
            for all $0\leq i\leq k-1$. Moreover, according to \cref{lem: trivial_action} $N$ acts trivially on $(\C[\gpd_n]\otimes_{\scalebox{0.8}{$\mathbb{C}[X_{G,H}\rtimes N]$}}L^{\infty}(\gpd^0))^{I}$. Thus, $\ker \pi'$ is a left $\C[\gpd_n]$-module in which $N$ acts trivially; and hence by induction
            \[
            \dim_{\scalebox{0.8}{$\VN(\gpd_n)$}}\left(\Tor_i^{\scalebox{0.8}{$\C[\gpd_n]$}}(\VN(\gpd_n),\ker \pi')\right)=0
            \]
            for all $0\leq i\leq k-1$. Therefore, by \cref{claim: equality_betti_subgpd_n}, (\ref{eqtn: dim_equality_ker}) and the induction hypothesis, we conclude that all the dimensions below the $k$th row in the long exact sequence vanish. So, since the dimension is exact
            \[
            \dim_{\scalebox{0.8}{$\VN(\gpd_n)$}}\left(\Tor_k^{\scalebox{0.8}{$\C[\gpd_n]$}}(\VN(\gpd_n),M)\right)
            \]
            is at most
            \[
            \dim_{\scalebox{0.8}{$\VN(\gpd_n)$}}\left(\Tor_k^{\scalebox{0.8}{$\C[\gpd_n]$}}(\VN(\gpd_n),(\C[\gpd_n]\otimes_{\scalebox{0.8}{$\essbdd\ast N$}}L^{\infty}(\gpd^0))^{I})\right).
            \]
            Finally, exactness of dimension, \cref{claim: equality_betti_subgpd_n} and the fact that $\beta_k^{(2)}(N)$ vanishes, finish the proof.
    \end{proof}    
            	    
    We are now ready to show (\ref{eqtn: mult_betti_Hgpd_and_gpd_n}) and hence prove \cref{thm: groupoids_beta_k}.
  
    \begin{thm}\label{thm: control_dim_gpds}
        Let $G$ be a countable group, $H$ an infinite index subgroup in $G$ containing a normal subgroup $N$ in $G$ and $k$ a positive integer. Suppose that $\beta_i^{(2)}(N)$ vanishes for all $0\leq i\leq  k-1$. Then $\beta_k^{(2)}(H)\geq \beta_k^{(2)}(\gpd_n)$.
    \end{thm}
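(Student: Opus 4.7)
The plan is to compare $\beta_k^{(2)}(H)$ and $\beta_k^{(2)}(\gpd_n)$ by means of a single short exact sequence of $\C[\gpd_n]$-modules and its induced long exact sequence of $\Tor$. Writing $L:=L^{\infty}(\gpd^0)$, the augmentation action of $\C[\gpd_n]$ on $L$ produces a natural surjective $\C[\gpd_n]$-homomorphism
\[
\bar\epsilon\colon \C[\gpd_n]\otimes_{\scalebox{0.8}{$\C[\Hgpd]$}}L \twoheadrightarrow L,\qquad a\otimes f\mapsto a\cdot f,
\]
whose kernel I shall denote by $K$. By \cref{prop: shapiro_dim_for_gpds} the middle term computes $\beta_k^{(2)}(H)$, while $L$ itself computes $\beta_k^{(2)}(\gpd_n)$. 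Consequently, the long exact sequence of $\Tor_\bullet^{\scalebox{0.8}{$\C[\gpd_n]$}}(\VN(\gpd_n),-)$ applied to
\[
0\to K\to \C[\gpd_n]\otimes_{\scalebox{0.8}{$\C[\Hgpd]$}}L\to L\to 0
\]
will reduce the theorem to showing that $\dim_{\scalebox{0.8}{$\VN(\gpd_n)$}}\Tor_{k-1}^{\scalebox{0.8}{$\C[\gpd_n]$}}(\VN(\gpd_n),K)=0$.

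To achieve this I would first verify that $N$ acts trivially on $\C[\gpd_n]\otimes_{\scalebox{0.8}{$\C[\Hgpd]$}}L$. This is precisely the content of \cref{lem: trivial_action} modulo a change of base ring: given $l\in N$ and $\chi_E\otimes f$ as in that lemma, the normality of $N$ in $G$ produces an element $\psi_l\in \C[X_{G,H}\rtimes N]$ satisfying $l\chi_E=\chi_E\psi_l$ and $\psi_l\cdot f=f$. Since $N\leq H$ gives $\C[X_{G,H}\rtimes N]\subseteq \C[\Hgpd]$, the element $\psi_l$ can still be moved across the balanced tensor product over $\C[\Hgpd]$, so the argument of \cref{lem: trivial_action} applies verbatim. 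As $K$ is a $\C[\gpd_n]$-submodule of $\C[\gpd_n]\otimes_{\scalebox{0.8}{$\C[\Hgpd]$}}L$, the group $N$ will automatically act trivially on $K$ as well.

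The remaining step is to apply \cref{prop: vanish_dim_mod_criteria_for_gpds} to the $\C[\gpd_n]$-module $K$: the hypothesis $\beta_i^{(2)}(N)=0$ for $0\leq i\leq k-1$ yields $\dim_{\scalebox{0.8}{$\VN(\gpd_n)$}}\Tor_i^{\scalebox{0.8}{$\C[\gpd_n]$}}(\VN(\gpd_n),K)=0$ in that entire range, and in particular for $i=k-1$. Feeding this vanishing into the long exact sequence will give $\beta_k^{(2)}(\gpd_n)\leq \beta_k^{(2)}(H)$, which is the statement.

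The main obstacle here is the first step: confirming that the argument of \cref{lem: trivial_action} indeed remains valid when the tensor product is taken over the larger ring $\C[\Hgpd]$. This hinges on the observation that the corrective element produced by the normality of $N$ already lives in $\C[X_{G,H}\rtimes N]$, which is contained in $\C[\Hgpd]$ because $N$ is a subgroup of $H$; no new dim-isomorphism trick appears to be required, and the ergodic-style combinatorics of \cref{sec: hom_features_gpd} are absorbed into the already-proved \cref{prop: shapiro_dim_for_gpds,prop: vanish_dim_mod_criteria_for_gpds}.
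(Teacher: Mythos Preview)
Your proposal is correct and follows essentially the same route as the paper: the short exact sequence $0\to K\to \C[\gpd_n]\otimes_{\scalebox{0.8}{$\C[\Hgpd]$}}L^{\infty}(\gpd^0)\to L^{\infty}(\gpd^0)\to 0$, the triviality of the $N$-action via \cref{lem: trivial_action}, and the application of \cref{prop: vanish_dim_mod_criteria_for_gpds} together with \cref{prop: shapiro_dim_for_gpds} are exactly the ingredients the paper uses. Your justification that the corrective element $\psi_l$ already lies in $\C[X_{G,H}\rtimes N]\subseteq \C[\Hgpd]$ (equivalently, that $\C[\gpd_n]\otimes_{\scalebox{0.8}{$\C[\Hgpd]$}}L^{\infty}(\gpd^0)$ is a quotient of the module treated in \cref{lem: trivial_action}) is precisely the observation the paper invokes implicitly when it writes ``From \cref{lem: trivial_action} follows that $N$ acts trivially on $\C[\gpd_n]\otimes_{\scalebox{0.8}{$\C[\Hgpd]$}}\essbdd$''.
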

		
    \begin{proof}
	Consider the following short exact sequence of $\C[\gpd_n]$-modules
        \[
        0\rightarrow \ker \pi\rightarrow \C[\gpd_n]\otimes_{\scalebox{0.8}{$\C[\Hgpd]$}}L^{\infty} (\gpd^0)\xrightarrow{\pi} L^{\infty}(\gpd^0)\rightarrow 0.
        \]
        Extending scalars yields to the following long exact sequence	
        \begin{align*}
            &\Tor_k^{\scalebox{0.8}{$\C[\gpd_n]$}}\left(\VN(\gpd_n),\C[\gpd_n]\otimes_{\scalebox{0.8}{$\C[\Hgpd]$}}L^{\infty}(\gpd^0)\right)\rightarrow\Tor_k^{\scalebox{0.8}{$\C[\gpd_n]$}}\left(\VN(\gpd_n),L^{\infty}(\gpd^0)\right)\rightarrow \ldots\\
            &\ldots \rightarrow \VN(\gpd_n)\otimes_{\scalebox{0.8}{$\C[\gpd_n]$}}\ker \pi\rightarrow \VN(\gpd_n)\otimes_{\scalebox{0.8}{$\C[\gpd_n]$}}\C[\gpd_n]\otimes_{\scalebox{0.8}{$\C[\Hgpd]$}}L^{\infty}(\gpd^0)\\
            &\rightarrow \VN(\gpd_n)\otimes_{\scalebox{0.8}{$\C[\gpd_n]$}}L^{\infty}(\gpd^0)\rightarrow 0.
	\end{align*}
        From \cref{lem: trivial_action} follows that $N$ acts trivially on the left $\C[\gpd_n]$-module $\C[\gpd_n]\otimes_{\scalebox{0.8}{$\C[\Hgpd]$}}\essbdd$. Hence all dimensions below the $k$th row vanish according to \cref{prop: vanish_dim_mod_criteria_for_gpds}; and so by exactness of dimension we get
        \[
        \beta_k^{(2)}(H)\geq \beta_k^{(2)}(\gpd_n)
        \]
	where the left hand side comes from \cref{prop: shapiro_dim_for_gpds}.
    \end{proof}

    As a consequence we can also strengthen the known control results so far (see \cite[Theorem 7.2.(2)]{Luck02} and \cite[Theorem 5.6]{PetersonThom_Freiheit}).
    
    \begin{cor}\label{cor: Luck_beta_k_for_H}
         Let $G$ be a countable group, $H$ a subgroup in $G$ containing a normal subgroup $N$ in $G$ and $k$ a positive integer. Suppose that $\beta_i^{(2)}(N)$ vanishes for all $0\leq i\leq  k-1$. Then $\beta_k^{(2)}(H)\geq \beta_k^{(2)}(G)$.
    \end{cor}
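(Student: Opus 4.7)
The plan is to reduce the statement to \cref{thm: control_dim_gpds} by splitting into the finite and infinite index cases, since that theorem crucially requires $H$ to be of infinite index (it is in the infinite index case that the auxiliary finite-index subgroupoids $\gpd_n$ of arbitrarily large index exist).

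First, if $[G:H]$ is finite, the inequality follows immediately from the multiplicativity of $L^2$-Betti numbers under finite-index subgroups (see \cite[Theorem 4.15]{Kamm_l2invt}), which in fact yields the stronger equality $\beta_k^{(2)}(H)=[G:H]\cdot \beta_k^{(2)}(G)$. No vanishing hypothesis on $N$ is needed in this case.

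Suppose instead that $[G:H]=\infty$. Then the hypotheses of \cref{thm: control_dim_gpds} are satisfied for every positive integer $n$. Combining its conclusion $\beta_k^{(2)}(H)\geq \beta_k^{(2)}(\gpd_n)$ with the multiplicativity \cref{cor: multiplicity_betti_numb_gpds} and the group/groupoid identification \cref{lem: equality_betti_numb_gps_gpds} yields
\[
\beta_k^{(2)}(H)\geq \beta_k^{(2)}(\gpd_n) = n\cdot \beta_k^{(2)}(\gpd) = n\cdot \beta_k^{(2)}(G)
\]
for every $n\geq 1$. If $\beta_k^{(2)}(H)=+\infty$ the conclusion is immediate; otherwise, letting $n\to\infty$ forces $\beta_k^{(2)}(G)=0$, and $\beta_k^{(2)}(H)\geq \beta_k^{(2)}(G)$ still holds.

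I expect no serious obstacle: the substantive content has already been absorbed into \cref{thm: control_dim_gpds}, and the corollary amounts to bookkeeping across the two index regimes, exploiting the unbounded multiplicativity of $L^2$-Betti numbers of the groupoids $\gpd_n$ against the infinite index of $H$.
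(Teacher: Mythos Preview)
Your argument is correct, but the paper's own proof is shorter: it simply takes $n=1$ in \cref{thm: control_dim_gpds}, so that $\gpd_1=\gpd$ and the inequality $\beta_k^{(2)}(H)\geq \beta_k^{(2)}(\gpd_1)=\beta_k^{(2)}(\gpd)=\beta_k^{(2)}(G)$ drops out directly from \cref{lem: equality_betti_numb_gps_gpds}, with no need to send $n\to\infty$ or to split on whether $\beta_k^{(2)}(H)$ is finite. Your detour through all $n$ recovers in passing the stronger conclusion that $\beta_k^{(2)}(G)=0$ in the infinite-index case with $\beta_k^{(2)}(H)<\infty$ (i.e.\ \cref{thm: groupoids_beta_k}), which is more than the corollary asks for. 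On the other hand, your explicit treatment of the finite-index case via multiplicativity is a genuine improvement in rigor: \cref{thm: control_dim_gpds} is stated only for $H$ of infinite index, and the paper's one-line proof does not address this case separately.
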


    \begin{proof}
        Note that taking $n=1$ in \cref{thm: control_dim_gpds} yields to $\beta_k^{(2)}(H)\geq \beta_k^{(2)}(G)$ by \cref{lem: equality_betti_numb_gps_gpds}.
    \end{proof}

    \begin{rem}\label{rem: Luck_extended_criterion}
        Observe that it is necessary that the $L^2$-Betti numbers of $N$ vanish, take for instance the direct product of $k$ free groups on two generators.
    \end{rem}

    Moreover, \cref{cor: Luck_beta_k_for_H} can be proved, actually improved, directly in the setting of groups as we show now.

    \begin{proof}[Proof of \cref{thm: control_beta_k_subnormal_for_H}]
        Consider the following short exact sequence
        \[
        0\rightarrow I_G/I_H^G \rightarrow \C[G]\otimes_{\scalebox{0.8}{$\mathbb{C}[H]$}}\mathbb{C}\rightarrow \mathbb{C}\rightarrow 0
        \]
        which induces the long exact sequence
        \begin{align*}
            &\Tor_k^{\scalebox{0.8}{$\C[G]$}}(\UO(G),\C[G]\otimes_{\scalebox{0.8}{$\mathbb{C}[H]$}}\mathbb{C})\rightarrow \Tor_k^{\scalebox{0.8}{$\C[G]$}}(\UO(G),\mathbb{C})\rightarrow \ldots\\
            \ldots&\rightarrow \UO(G)\otimes_{\scalebox{0.8}{$\C[G]$}}I_G/I_H^G\rightarrow \UO(G)\otimes_{\scalebox{0.8}{$\mathbb{C}[G]$}}(\C[G]\otimes_{\scalebox{0.8}{$\mathbb{C}[H]$}}\mathbb{C})\rightarrow \UO(G)\otimes_{\scalebox{0.8}{$\mathbb{C}[G]$}}\mathbb{C}\rightarrow 0.
        \end{align*}
        We shall show that all the dimensions below the $k$th row vanish.
        \begin{claim}\label{claim: control_augment_ideal_1}
            If $N$ is a subnormal subgroup in $G$ with $\beta_i^{(2)}(N)=0$ for all $0\leq i\leq k-1$, then $\dim_{\scalebox{0.8}{$\UO(G)$}}(\Tor_i^{\scalebox{0.8}{$\C[G]$}}(\UO(G),I_G/I_N^G))=0$ for all $0\leq i\leq k-1$.
        \end{claim}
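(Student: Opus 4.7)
My plan is to prove the claim by induction on the subnormal depth $m$ of $N$ in $G$, after fixing a subnormal chain $N=N_0\trianglelefteq N_1\trianglelefteq\cdots\trianglelefteq N_m=G$. The base case is $m=1$, where $N$ is normal in $G$; the inductive step will proceed by combining a short exact sequence with Shapiro's lemma and dimension preservation.

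For the base case, I first observe that $N$ acts trivially on the left $\C[G]$-module $I_G/I_N^G$: since $N$ is normal, $I_N^G$ is two-sided and $(n-1)g=g(g^{-1}ng-1)\in I_N^G$ for every $n\in N$, $g\in G$. Assuming $[G:N]=\infty$ (the finite-index situation is handled separately by multiplicativity of $L^2$-Betti numbers), for each $i$ with $0\leq i\leq k-1$ I plan to apply \cref{thm: module_beta_k} to $M=I_G/I_N^G$. The hypothesis $\beta_j^{(2)}(N)=0$ for $0\leq j\leq i-1$ is given by assumption; the finiteness of $\dim_{\scalebox{0.8}{$\UO(N)$}}\Tor_i^{\scalebox{0.8}{$\C[N]$}}(\UO(N),I_G/I_N^G)$ follows because, with $N$ acting trivially, $I_G/I_N^G$ decomposes as a direct sum of copies of the trivial $\C[N]$-module $\C$, so the Tor module is a direct sum of copies of $\Tor_i^{\scalebox{0.8}{$\C[N]$}}(\UO(N),\C)$, and \cref{lem: dim_directed_union} together with $\beta_i^{(2)}(N)=0$ forces its dimension to vanish.

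For the inductive step, I would exploit the short exact sequence
\[
0\longrightarrow I_{N_1}^G/I_N^G\longrightarrow I_G/I_N^G\longrightarrow I_G/I_{N_1}^G\longrightarrow 0.
\]
The subgroup $N_1$ is subnormal of depth $m-1$ in $G$, and $\beta_j^{(2)}(N_1)=0$ for $0\leq j\leq k-1$ follows by applying \cref{thm: module_beta_k} degree by degree to the pair $N\trianglelefteq N_1$ with $M=\C$, so the inductive hypothesis yields the vanishing of $\dim_{\scalebox{0.8}{$\UO(G)$}}\Tor_i^{\scalebox{0.8}{$\C[G]$}}(\UO(G),I_G/I_{N_1}^G)$ in the range $0\leq i\leq k-1$. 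For the left term, \cref{lem: isomph_ind_augment_ideal}(ii) identifies it with $\C[G]\otimes_{\scalebox{0.8}{$\C[N_1]$}}(I_{N_1}/I_N^{N_1})$, so Shapiro's lemma together with (\ref{eqtn: preserve_dim_gps}) yields
\[
\dim_{\scalebox{0.8}{$\UO(G)$}}\Tor_i^{\scalebox{0.8}{$\C[G]$}}(\UO(G),I_{N_1}^G/I_N^G)=\dim_{\scalebox{0.8}{$\UO(N_1)$}}\Tor_i^{\scalebox{0.8}{$\C[N_1]$}}(\UO(N_1),I_{N_1}/I_N^{N_1}),
\]
which vanishes by the base case applied to the normal pair $N\trianglelefteq N_1$. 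Taking the long exact sequence of Tor and invoking exactness of the $\UO(G)$-dimension then concludes the induction.

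The main obstacle I anticipate is the infinite-index hypothesis in \cref{thm: module_beta_k}, which must be met both when invoking it in the base case inside $G$ and when invoking it for the normal pair $N\trianglelefteq N_1$ in the inductive step (and in the auxiliary deduction $\beta_j^{(2)}(N_1)=0$). I plan to handle any finite-index alternatives separately by appealing to multiplicativity of $L^2$-Betti numbers under finite-index inclusions, which delivers the required vanishing immediately in those cases.
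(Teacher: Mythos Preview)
Your overall architecture---induction on the subnormal depth, the short exact sequence $0\to I_{N_1}^G/I_N^G\to I_G/I_N^G\to I_G/I_{N_1}^G\to 0$, the identification via \cref{lem: isomph_ind_augment_ideal} together with Shapiro's lemma and dimension preservation---is exactly the paper's proof. The only substantive difference is in how you treat the base case (and the auxiliary fact $\beta_j^{(2)}(N_1)=0$): the paper invokes \cref{prop: control_dim_modules} (which needs no index hypothesis) for the base case and cites L\"uck's \cite[Theorem~7.2(2)]{Luck02} for the vanishing of $\beta_j^{(2)}(N_1)$, whereas you route both through \cref{thm: module_beta_k}. Since \cref{thm: module_beta_k} is itself proved via \cref{prop: control_dim_modules}, your argument is a detour that inherits an unnecessary infinite-index hypothesis; it also does not literally cover $i=0$, as \cref{thm: module_beta_k} is stated for positive $k$ only (though the degree-zero case is of course easy directly).

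There is one genuine imprecision in your finite-index fallback. Multiplicativity of $L^2$-Betti numbers under finite-index inclusions yields $\beta_i^{(2)}(G)=0$ and $\beta_j^{(2)}(N_1)=0$, and the latter is indeed what you need in the inductive step. But in the base case the target statement is $\dim_{\scalebox{0.8}{$\UO(G)$}}\bigl(\Tor_i^{\scalebox{0.8}{$\C[G]$}}(\UO(G),I_G/I_N^G)\bigr)=0$, which concerns the module $I_G/I_N^G$ rather than $\C$, and multiplicativity alone does not deliver this. It can be repaired (for instance, when $G/N$ is finite Maschke's theorem makes $I_G/I_N^G$ a direct summand of a finite power of $\C[G]\otimes_{\scalebox{0.8}{$\C[N]$}}\C$, whence the vanishing follows from $\beta_i^{(2)}(N)=0$), but the clean fix is simply to cite \cref{prop: control_dim_modules}, which handles the normal base case for arbitrary $\C[G]$-modules with trivial $N$-action and with no index assumption, eliminating the case distinction entirely.
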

        \begin{proof}
            We work by induction on the depth of $N$. If $N$ is normal in $G$, then this follows from \cref{prop: control_dim_modules} since $N$ acts trivially on $I_G/I_N^G$. Thus suppose that $N\triangleleft N_1$ with $N_1$ a proper subnormal subgroup in $G$ of strictly less depth than $N$. Now consider the next exact sequence
            \begin{equation}\label{eqtn: control_augment_ideal_1}
                0\rightarrow I_{N_1}^G/I_N^G\rightarrow I_G/I_N^G\rightarrow I_G/I_{N_1}^G.
            \end{equation}
            Note that $\beta_i^{(2)}(N_1)=0$ for all $0\leq i\leq k-1$ by \cite[Theorem 7.2.(2)]{Luck02}; and hence by induction hypothesis $\dim_{\scalebox{0.8}{$\UO(G)$}}(\Tor_i^{\scalebox{0.8}{$\C[G]$}}(\UO(G),I_G/I_{N_1}^G))=0$ for all $0\leq i\leq k-1$. In addition, according to \cref{lem: isomph_ind_augment_ideal}, $\C[G]\otimes_{\scalebox{0.8}{$\C[N_1]$}}I_{N_1}/I_N^{N_1}$ is isomorphic to $I_{N_1}^G/I_N^G$. Hence, applying Shapiro's lemma along with (\ref{eqtn: preserve_dim_gps}) we get
            \[
            \dim_{\scalebox{0.8}{$\UO(G)$}}\left(\Tor_i^{\scalebox{0.8}{$\C[G]$}}(\UO(G),I_{N_1}^G/I_N^G)\right)=\dim_{\scalebox{0.8}{$\UO(N_1)$}}\left(\Tor_i^{\scalebox{0.8}{$\C[N_1]$}}(\UO(N_1),I_{N_1}/I_N^{N_1})\right)
            \]
            which is zero for all $0\leq i \leq k-1$ by the base step. Therefore, extending scalars on (\ref{eqtn: control_augment_ideal_1}) yields to the vanishing result and shows the induction.
        \end{proof}

        \begin{claim}\label{claim: control_augment_ideal_2}
            We have that $\dim_{\scalebox{0.8}{$\UO(G)$}}(\Tor_i^{\scalebox{0.8}{$\C[G]$}}(\UO(G),I_G/I_H^G))=0$ for all $0\leq i\leq k-1$.
        \end{claim}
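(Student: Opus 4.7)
My plan is to bootstrap from Claim \ref{claim: control_augment_ideal_1} by comparing $I_G/I_H^G$ with $I_G/I_N^G$. I would begin with the short exact sequence of left $\C[G]$-modules
\[
0\to I_H^G/I_N^G \to I_G/I_N^G \to I_G/I_H^G \to 0
\]
and its associated $\Tor_\bullet^{\C[G]}(\UO(G),-)$ long exact sequence. The middle term has vanishing $\UO(G)$-dimension in degrees $0\leq i\leq k-1$ by the previous claim, so the whole task reduces to showing that $\dim_{\UO(G)}\Tor_i^{\C[G]}(\UO(G),I_H^G/I_N^G)=0$ in the same range.

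For this left-hand term I would invoke Lemma \ref{lem: isomph_ind_augment_ideal}(ii) to rewrite $I_H^G/I_N^G\cong \C[G]\otimes_{\C[H]}I_H/I_N^H$, and then apply Shapiro's lemma together with the dimension-preservation equation (\ref{eqtn: preserve_dim_gps}) to obtain
\[
\dim_{\UO(G)}\Tor_i^{\C[G]}(\UO(G),I_H^G/I_N^G)=\dim_{\UO(H)}\Tor_i^{\C[H]}(\UO(H),I_H/I_N^H).
\]
Now I want to apply Claim \ref{claim: control_augment_ideal_1} with $H$ in place of $G$. The only point to verify is that $N$ is subnormal in $H$: intersecting a subnormal chain $N=N_0\triangleleft N_1\triangleleft\cdots\triangleleft N_m=G$ with $H$ produces $N=N_0\cap H\triangleleft N_1\cap H\triangleleft\cdots\triangleleft N_m\cap H=H$, so subnormality is inherited (with depth no larger than in $G$). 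Since the hypothesis $\beta_i^{(2)}(N)=0$ depends only on $N$, Claim \ref{claim: control_augment_ideal_1} applies to the pair $(H,N)$ and yields the desired vanishing.

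Combining these inputs in the long exact sequence, for $1\leq i\leq k-1$ the two neighbouring $\Tor$-terms have zero $\UO(G)$-dimension, so by exactness of $\dim_{\UO(G)}$ the middle $\Tor_i^{\C[G]}(\UO(G),I_G/I_H^G)$ does as well. For $i=0$, right-exactness of $\UO(G)\otimes_{\C[G]}-$ presents $\UO(G)\otimes_{\C[G]}I_G/I_H^G$ as a quotient of $\UO(G)\otimes_{\C[G]}I_G/I_N^G$, whose dimension already vanishes. I do not anticipate a serious obstacle; the only conceptual step is the subnormality-inheritance from $G$ to $H$, which combined with Shapiro's lemma packages the argument as a clean reuse of the preceding claim.
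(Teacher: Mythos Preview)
Your proposal is correct and matches the paper's own argument essentially step for step: the same short exact sequence $0\to I_H^G/I_N^G\to I_G/I_N^G\to I_G/I_H^G\to 0$, the same identification $I_H^G/I_N^G\cong \C[G]\otimes_{\C[H]}I_H/I_N^H$ via \cref{lem: isomph_ind_augment_ideal}, the same Shapiro-plus-dimension-preservation reduction to $(H,N)$, and the same appeal to \cref{claim: control_augment_ideal_1} after noting that $N$ remains subnormal in $H$. Your explicit justification of subnormality inheritance (intersecting the chain with $H$) and the separate treatment of $i=0$ are minor elaborations on what the paper leaves implicit.
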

        \begin{proof}
            Consider the next short exact sequence
            \begin{equation}\label{eqtn: control_augment_ideal_2}
                0\rightarrow I_H^G/I_N^G \rightarrow I_G/I_N^G \rightarrow I_G/I_H^G\rightarrow 0.
            \end{equation}
            Again by \cref{lem: isomph_ind_augment_ideal} $\C[G]\otimes_{\scalebox{0.8}{$\C[H]$}}I_{H}/I_N^{H}$ is isomorphic to $I_{H}^G/I_N^G$. Hence, applying Shapiro's lemma along with (\ref{eqtn: preserve_dim_gps}) we obtain
            \[
            \dim_{\scalebox{0.8}{$\UO(G)$}}\left(\Tor_i^{\scalebox{0.8}{$\C[G]$}}(\UO(G),I_{H}^G/I_N^G)\right)=\dim_{\scalebox{0.8}{$\UO(H)$}}\left(\Tor_i^{\scalebox{0.8}{$\C[H]$}}(\UO(H),I_{H}/I_N^{H})\right).
            \]
            Thus, since $N$ is also subnormal in $H$, both $\dim_{\scalebox{0.8}{$\UO(G)$}}\left(\Tor_i^{\scalebox{0.8}{$\C[G]$}}(\UO(G),I_{H}^G/I_N^G)\right)$ and $\dim_{\scalebox{0.8}{$\UO(G)$}}\left(\Tor_i^{\scalebox{0.8}{$\C[G]$}}(\UO(G),I_G/I_N^G)\right)$ vanish for all $0\leq i\leq k-1$ according to \cref{claim: control_augment_ideal_1}. Therefore, extending scalars on (\ref{eqtn: control_augment_ideal_2}) yields to the vanishing result.
        \end{proof}
        Now, by repeated applications of \cite[Theorem 7.2.(2)]{Luck02} it holds that $\beta_i^{(2)}(H)$ $=\beta_i^{(2)}(G)=0$ for all $0\leq i\leq k-1$. This together with \cref{claim: control_augment_ideal_2} shows that all dimensions below the $k$th row vanish as we wanted. In particular, by exactness of dimension we get
        \begin{align*}
            \beta_k^{(2)}(H)=&\dim_{\scalebox{0.8}{$\UO(G)$}}\left(\Tor_k^{\scalebox{0.8}{$\C[G]$}}(\UO(G),\C[G]\otimes_{\scalebox{0.8}{$\mathbb{C}[H]$}}\mathbb{C})\right)\\
            \geq&\dim_{\scalebox{0.8}{$\UO(G)$}}\left(\Tor_k^{\scalebox{0.8}{$\C[G]$}}(\UO(G),\mathbb{C})\right)=\beta_k^{(2)}(G).
        \end{align*}
    \end{proof}

    Note that the case $k=1$ of \cref{thm: control_beta_k_subnormal_for_H} provides evidence of a positive answer for Hillman's question. Indeed, we would be done if there were finite index subgroups in $G$ which had arbitrary high index and contained $N$. Finally we exhibit the progress made with respect to the depth of the subnormal subgroup $N$.

    \begin{proof}[Proof of \cref{cor: Hillman_depth_2}]
        Let $N=N_0\trianglelefteq N_1 \triangleleft G$ be a minimal subnormal chain for $N$. The case $N=N_1$ is \cref{thm: groupoids_beta_k}, so assume that $N\neq N_1$. First observe that $\beta_i^{(2)}(N_1)=0$ for all $0\leq i\leq k-1$ by \cite[Theorem 7.2.(2)]{Luck02}. Now set $H_1:=HN_1$. Note that if $|H_1:H|$ is finite, then $\beta_k^{(2)}(H_1)$ is finite. So according to \cref{thm: groupoids_beta_k} for $H_1$ and $N_1$, it follows that $\beta_k^{(2)}(G)$ vanishes. Thus suppose now that $|H_1:H|$ is infinite. Observe that
        \[
        \mbox{core}_{H_1}(H)=\mbox{core}_{N_1}(H)\supseteq\mbox{core}_{N_1}(N)=N.
        \]
        Thus, since $N$ is also subnormal on $\mbox{core}_{H_1}(H)$, repeated applications of \cite[Theorem 7.2.(2)]{Luck02} yield to $\beta_i^{(2)}(\mbox{core}_{H_1}(H))=0$ for all $0\leq i\leq k-1$. So $H$ contains a normal subgroup of $H_1$ with vanishing $L^2$-Betti numbers. Hence, according to \cref{thm: groupoids_beta_k} $\beta_k^{(2)}(H_1)=0$. Therefore, by \cref{cor: Luck_beta_k_for_H}, $0=\beta_k^{(2)}(H_1)\geq \beta_k^{(2)}(G)$ which ends the proof.
    \end{proof}

    The last corollary is a generalization of the classical results by A. Karras and D. Solitar in \cite{KarrasSolitar}, H. Griffiths in \cite{Griffiths}, and B. Baumslag in \cite{Baumslag_FreeProd}.

    \begin{cor}
        Let $G$ be a countable group with positive $\beta_1^{(2)}(G)$, $H$ a finitely generated subgroup in $G$ and $N$ a normal subgroup in $G$. Suppose that $H$ contains an infinite normal subgroup in $N$, then $H$ is of finite index in $G$.
    \end{cor}

\bibliographystyle{alpha}
\bibliography{bib}

\end{document}